\newtheorem{theorem}{Theorem}[section]
\newtheorem{lemma}[theorem]{Lemma}
\newtheorem{proposition}[theorem]{Proposition}
\newtheorem{corollary}[theorem]{Corollary}
\theoremstyle{remark}
\newtheorem{observation}{Observation}
\theoremstyle{definition}
\newtheorem{definition}[theorem]{Definition}
\newtheorem{remark}[theorem]{Remark}
\newtheorem*{lemma*}{Lemma}
\newtheorem*{theorem*}{Theorem}
\newtheorem*{proposition*}{Proposition}
\newtheorem*{corollary*}{Corollary}
\newtheorem*{definition*}{Definition}
\newcommand{\C}{\mathbb{C}}
\newcommand{\N}{\mathbb{N}}
\newcommand{\R}{\mathbb{R}}
\newcommand{\Z}{\mathbb{Z}}
\newcommand{\n}{\mathfrak{n}}
\newcommand{\brac}[2]{\langle#1,#2\rangle}
\newcommand{\z}{\mathfrak{z}}
\renewcommand{\v}{\mathfrak{v}}
\renewcommand{\a}{\mathfrak{a}}
\newcommand{\s}{\mathfrak{s}}
\newcommand{\romannum}{\renewcommand{\labelenumi}{\textnormal{(\roman{enumi})}}}
\newcommand{\refr}[1]{(\ref{#1})}
\newcommand{\cfct}{\mathbf{c}}
\newcommand{\dfct}{\mathbf{d}}
\title[Jacobi transform multipliers and fractional integration]{$L^p$-results for fractional integration and multipliers for the Jacobi transform}
\author{Troels Roussau Johansen}
\address{Mathematisches Seminar,\\
   Christian-Albrechts Universit\"at zu Kiel\\
   Ludewig-Meyn-Strasse 4, D-24098 Kiel\\
   Germany}
\email{johansen@math.uni-kiel.de}
\subjclass[2010]{44A35 (primary), 20N20, 33C05, 34E05, 42A45 (secondary)}
\begin{document}

\begin{abstract}
We use precise asymptotic expansions for Jacobi functions
$\varphi^{(\alpha,\beta)}_\lambda$ parameters $\alpha$, $\beta$
satisfying $\alpha>\frac{1}{2}$, $\alpha>\beta>-\frac{1}{2}$, to generalizing classical H\"ormander-type multiplier theorem
for the spherical transform on a rank one Riemannian symmetric space (by Clerc/Stein and Stanton/Tomas) to the
framework of Jacobi analysis. In particular, multiplier results for the
spherical transform on Damek--Ricci spaces are subsumed by this approach, and it yields multiplier results for the hypergeometric `Heckman--Opdam transform' associated with a rank one root system.
We obtain near-optimal $L^p-L^q$ estimates for the integral operator associated with the convolution kernel $m_a:\lambda\mapsto(\lambda^2+\rho^2)^{-a/2}$, $a>0$.
\end{abstract}
\maketitle
\addtocounter{section}{-1}
\section{Disclaimer}
The present preprint will not be submitted for publication since the main result -- the Hörmander-Mihlin multiplier theorem for the Jacobi transform -- is a special case of results from Bloom/Xu: \emph{Fourier multipliers for {$L^p$} on {C}h\'ebli-{T}rim\`eche hypergroups}, Proc. London Math. Soc. (3) \textbf{80} (2000), 643-664.

However, the proof is different (it uses transference for hypergroups, appearing in \cite{Gigante}, as well as a precise asymptotic expansion of Jacobi functions), so it directly extends the methods from \cite{Stanton-Tomas}. There are several technical issues to overcome in this extension to Jacobi analysis, and since we use the same techniques elsewhere, we have decided to post the results in order to have a convenient reference. 

\section{Introduction}
Spherical functions on noncompact Riemannian symmetric spaces of rank one
behave like classical Bessel functions close to the origin but behave quite
differently at infinity, as was made precise in \cite{Stanton-Tomas}. It was
realized early on that these spherical functions coincided with certain special
functions $\varphi^{(\alpha,\beta)}_\lambda$ -- called Jacobi functions -- for
suitable positive integer parameters $\alpha,\beta$. The analogous asymptotic
behavior of $\varphi^{(\alpha,\alpha)}_\lambda$ was investigated at length in
\cite{Schindler}. The present paper is also about the detailed asymptotic
behavior of the Jacobi functions, now for arbitrary complex parameters
satisfying the condition that $\Re\alpha>\frac{1}{2}$, $\Re\alpha>\Re\beta>-\frac{1}{2}$; we always write $\Re z$ for the real part and $\Im z$ for the imaginary part of a complex number $z$.

One of the  earliest studies of multipliers in the context of noncompact
Riemannian symmetric spaces is the important paper \cite{ClercStein}, where
several key ideas towards a proof of multiplier theorems to come were first
presented. The authors certainly knew, although it was not thusly designated at
the time, that one ought to be able to `transfer' multipliers for the spherical
transform on $G/K$ to a multiplier for the Euclidean Fourier transform on the
Euclidean space $\mathfrak{a}$ coming from the $KAK$-decomposition of $G$. Once
in Euclidean space, one should apply the H\"ormander--Mikhlin multiplier
theorem and then transfer the results back to $G/K$. This approach was carried
out in detail in \cite{Stanton-Tomas}, where the relevant transference
principle is stated as well. A different approach was taken by Anker in
\cite{Anker-Annals}, where somewhat more precise multiplier results are derived
for higher rank symmetric spaces. His approach is based on a detailed study of
the Abel transform, and while there \emph{is} an Abel transform in the Jacobi
setting as well, it is not nearly as well-behaved, and we have decided to
forego it in the present paper. We thus obtain multiplier results for the
Jacobi transform analogous to the results contained in
\cite[Section~5]{Stanton-Tomas} for the spherical Fourier transform.
Summability and almost everywhere convergence results for the Jacobi transform, analogous to
\cite{Meaney-Prestini}, has been treated in \cite{Johansen-disc}.

In Section \ref{section.transference}, we explain such a principle in the
Jacobi setting. A principle of transference, in the sense of Coifman and Weiss,
is a statement that relates norm estimates for convolution operators acting on
different spaces. The present paper was written in part to
understand to what extend this classical method could be generalized, but since
convolution in the more general Jacobi setting is not nearly as pleasant,
experts in the seventies perhaps did not consider the natural extension. As
will become apparent below, the missing link was the observation that the
Jacobi convolution could be described in terms of a structure known as a
\emph{hypergroup}. Once we realized that this could be done, it was a trivial
matter to discover the paper \cite{Gigante}, where a transference result is
given for real parameters $\alpha$, $\beta$ satisfying $\alpha\geq\beta\geq
-\frac{1}{2}$.

In Section \ref{section.multipliers} we introduce the natural notion of a
multiplier for the Jacobi transform, show that they necessarily must extend holomorphically into a suitable strip in $\C$,
and we formulate one of the two multiplier
theorems in Theorem \ref{thm.hormander.multiplier}. The proof will occupy more
than half the paper, since we have to redo some of the lengthy proofs in
\cite{Stanton-Tomas} for real parameters $\alpha$, $\beta$ and complex
spectral parameter $\lambda$. One benefit is that we are able to cover, in a uniform way,
all rank one symmetric spaces, all Damek--Ricci spaces, as
well as type $BC$ root systems. We shall
explain these examples at length in Section \ref{section.examples}. At this
point we should also like to mention that the results from Section
\ref{section.local} and Section \ref{section.long-range} will be utilized in
companion papers: In \cite{Johansen-disc} we investigate almost everywhere convergence of
the inverse Jacobi transform and properties of a disc multiplier, and in \cite{Johansen-exp2} we extend a result due to Giulini, Mauceri, and Meda, to the effect that certain non-integrable multipliers are allowed. Applications to almost everywhere convergence of Bochner--Riesz means will be given elsewhere.

In Section \ref{sec.fract} we examine $L^p-L^q$ mapping properties of Riesz transforms (``fractional integration''). The study of such transforms and more general potentials of the form $(zI-\Delta)^{-s}$ have attracted much attention over the years, and continue to be relevant. We certainly cannot adequately account for the vast literature on the topic, so we simply acknowledge the papers that motivated the present extension to Jacobi functions:  As is already apparent, our point of departure was the fundamental paper \cite{Stanton-Tomas}, where the results that we generalize appeared in Section 6. Using methods from spectral geometry of Riemannian manifolds, together with precise kernel estimates, the study of $L^p-L^q$ mapping properties were later carried out both for higher rank symmetric spaces (\cite{Anker-Lohoue}, \cite{Anker.duke},\cite{Cowling_Giulini_Meda1}) and for more general Riemannian manifolds (\cite{Lohoue},\cite{Taylor.duke}), to name a few. Our method is more elementary and does not rely on any geometric properties (except perhaps noncompactness) of the underlying space (which is just $\R$) but instead rests heavily on precise information on objects like the $\mathbf{c}$-function and asymptotic behavior of spherical functions. Such was also the approach in \cite{Stanton-Tomas}, the advantage being that we can immediately cover nonsymmetric spaces (Damek--Ricci spaces), and even beyond.  We have added some details in the interpolation arguments needed for the main result, Theorem \ref{thm.frac}, as well as fixing a small gap in an argument pertaining to Lorentz space estimates, so that the proof is lengthier than the symmetric space analogue, \cite[Theorem~6.1]{Stanton-Tomas}.
\medskip

A final remark on the choice of parameters $\alpha,\beta$ must be made: While deriving the asymptotic expansions for $\varphi_\lambda^{(\alpha,\beta)}$ is uneventful even for complex parameters, serious problems crop up when we study the relevant convolution inequalities. Since we would now have to integrate against a complex measure, and since the convolution `kernel' would not give rise to a probability density anymore, standard inequalities like the H\"older and Hausdorff--Young inequalities would therefore have to be rewritten. Other statements simply do dot hold anymore. We are grateful to Margit R\"osler for pointing out these issues.

\section{Preliminary Remarks on Jacobi Functions}
We briefly recall some pertinent facts on Jacobi functions. A much more
detailed account may be found in \cite{Koornwinder-book}. Let $(a)_0=1$ and
$(a)_k=a(a+1)\cdots (a+k-1)$. The hypergeometric function ${_2}F_1(a,b;c,z)$ is
defined by
\[{_2}F_1(a,b;c,z)=\sum_{k=0}^\infty\frac{(a)_k(b)_k}{(c)_kk!}z^k,\quad\vert z\vert<1;\]
the function $z\mapsto{_2}F_1(a,b;c,z)$ is the unique solution of the differential equation
\[z(1-z)u''(z)+(c-(a+b+1)z)u'(z)-abu(z)=0\]
which is regular in $0$ and equals $1$ there. The Jacobi function with
parameters $(\alpha,\beta)$ is defined by
$\varphi_\lambda^{(\alpha,\beta)}(t)={_2}F_1(\frac{1}{2}(\alpha+\beta+1-i\lambda),
\frac{1}{2}(\alpha+\beta+1+i\lambda); \alpha+1,-\sinh^2t)$. For
$\vert\beta\vert<\alpha+1$, the system
$\{\varphi_\lambda^{(\alpha,\beta)}\}_{\lambda\geq 0}$ is a continuous
orthonormal system in $\R^+$ with respect to the weight
$\Delta_{\alpha,\beta}(t)=(2\sinh t)^{2\alpha+1}(2\cosh t)^{2\beta+1}$, $t>0$.

In what follows we assume that $\alpha\neq -1,-2,\ldots$, $\alpha>\frac{1}{2}$, and $\alpha>\beta>-\frac{1}{2}$. We shall consider the more general case of complex parameters satisfying the relations $\Re\alpha>\frac{1}{2}$,  $\Re\alpha>\Re\beta+1>$ and $\Re\beta>-\frac{1}{2}$ in Appendix \ref{app.asymptotic}, but as far as serious analysis with convolution inequalities and $L^p$-spaces is concerned, we need this restriction. The added requirement $\alpha>\frac{1}{2}$ is unnecessary as far as the general Jacobi analysis goes but is needed for the asymptotic analysis. The usual Lebesgue space on $\R^+$ shall simply be denoted $L^p$, whereas by $L^p(d\mu)$ we understand the weighted Lebesgue space, with $d\mu(t)=d\mu_{\alpha,\beta}(t)=\Delta(t)\,dt$. Let $\rho=\alpha+\beta+1$. We adopt the notational convention of writing $\mu(A)$ for the weighted measure of a measurable subset $A$ of $\R$, that is, $\mu(A)=\|1_A\|_{L^1(d\mu)}$. It is of paramount importance to stress that the behavior of $\Delta(t)$ depends on the `size' of $t$. More precisely, $\vert\Delta_{\alpha,\beta}(t)\vert \leq t^{2\alpha+1}$ for $t\lesssim 1$, whereas $\vert\Delta_{\alpha,\beta}(t) \lesssim e^{2\rho t}$ for $t\gg 1$

The Jacobi-Laplacian is the operator
$\mathcal{L}=\mathcal{L}_{\alpha,\beta}=\frac{d^2}{dt^2}+((2\alpha+1)\coth
t+(2\beta+1)\tanh t)\frac{d}{dt}$, by means of which the Jacobi function
$\varphi_\lambda^{(\alpha,\beta)}$ may alternatively be characterized as the
unique solution to
\begin{equation}\label{eqn.eigeneqn}
\mathcal{L}_{\alpha,\beta}\varphi+(\lambda^2+\rho^2)\varphi=0
\end{equation}
on $\R^+$ satisfying $\varphi_\lambda(0)=1$ and $\varphi_\lambda'(0)=0$. It is
thereby clear that $\lambda\mapsto\varphi_\lambda(t)$ is analytic for all
$t\geq 0$. Moreover, for $\Im\lambda\geq 0$, there exists a unique
solution $\phi_\lambda$ to the same equation satisfying
$\phi_\lambda(t)=e^{(i\lambda-\rho)t}(1+o(1))$ as $t\to\infty$, and
$\lambda\mapsto\phi_\lambda(t)$ is therefore also analytic for $t\geq 0$.

In analogy with the case of symmetric spaces, one proceeds to show the existence of a function
$\cfct=\cfct_{\alpha,\beta}$ for which
$\varphi_\lambda(t)=\cfct(\lambda)e^{(i\lambda-\rho)t}\phi_\lambda(t)+\cfct(-\lambda)e^{(-i\lambda-\rho)t}\phi_{-\lambda}(t)$.
Since we adhere to the conventions and normalization used in
\cite{Koornwinder-FJ}, the $\cfct$-function is given by
\[\cfct(\lambda)=\frac{2^\rho\Gamma(i\lambda)\Gamma(\frac{1}{2}(1+i\lambda))}
{\Gamma(\frac{1}{2}(\rho+i\lambda))\Gamma(\frac{1}{2}(\rho+i\lambda)-\beta)}.\]
Observe that for $\alpha,\beta\neq -1,-2,\ldots$, $\cfct(-\lambda)^{-1}$ has
finitely many poles for $\Im\lambda <0$ and none if $\Im\lambda\geq 0$ and $\Re\rho>0$. It follows from Sterling's formula that
for every $r>0$ there exists a positive constant $c_r$ such  that
\begin{equation}\label{eqn.c-fct.est}
\vert\cfct(-\lambda)\vert^{-1}\leq c_r(1+\vert\lambda\vert)^{\Re\alpha+\frac{1}{2}}\text{ if } \Im\lambda\geq 0\text{ and }
\cfct(-\lambda')\neq 0\text{ for }\vert\lambda'-\lambda\vert\leq r;
\end{equation}
for easy reference we also recall from \cite[Lemma~2.1]{Johansen-disc} the following estimates.

\begin{lemma}\label{lemma.precise-c} Assume $\alpha>\beta>-\frac{1}{2}$.
 \begin{enumerate}
 \romannum
 \item For every integer $M$ there exist constants $c_i, i=0,\ldots,M-1$ (depending on $\alpha$, $\beta$, and $M$) such that
     \[\vert\cfct(\lambda)\vert^{-2}\thicksim c_0\vert\lambda\vert^{2\alpha+1}\Biggl\{1+\sum_{j=1}^{M-1}c_j\lambda^{-j}+O\bigl(\lambda^{-M}\bigl)\Biggr\}\text{ as } \vert\lambda\vert\to\infty.\]
\item Let $\dfct(\lambda)=\vert\cfct(\lambda)\vert^{-2}$,
$\lambda\geq 0$, and $k\in\N_0$. There exists a constant
$c_k=c_{k,\alpha,\beta}$ such that
\[\Bigl|\frac{d^k}{d\lambda^k}\dfct(\lambda)\Bigr|\leq
c_k(1+\vert\lambda\vert)^{2\alpha+1-k}.\]
\item $\cfct'(\lambda)\thicksim \cfct(\lambda)O(\lambda^{-1})$ and $\cfct''(\lambda)\thicksim\cfct(\lambda)O(\lambda^{-2})$.
\end{enumerate}
\end{lemma}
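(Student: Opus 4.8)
The plan is to reduce all three parts to Stirling's asymptotic expansion for $\log\Gamma$ applied to the gamma quotient defining $\cfct$. Set $L(\lambda)=\log\cfct(\lambda)$, a finite linear combination of terms $\log\Gamma(\,\cdot\,)$ with arguments affine in $i\lambda$, plus an additive constant. Then $\cfct'/\cfct=L'$ and $\cfct''/\cfct=L''+(L')^2$, while for real $\lambda>0$ the reality relation $\overline{\cfct(\lambda)}=\cfct(-\lambda)$---immediate from $\overline{\Gamma(\bar z)}=\Gamma(z)$ and the reality of $\alpha,\beta,\rho$---gives $\dfct(\lambda)=\lvert\cfct(\lambda)\rvert^{-2}=e^{-2\Re L(\lambda)}$. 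Thus all three statements become statements about the symbol $L$, and the single engine is Stirling's series $\log\Gamma(z)\thicksim(z-\tfrac12)\log z-z+\tfrac12\log(2\pi)+\sum_{n\ge1}\tfrac{B_{2n}}{2n(2n-1)}z^{1-2n}$, valid and differentiable term by term in any sector $\lvert\arg z\rvert<\pi$, together with its companion $\psi(z)\thicksim\log z-\tfrac1{2z}-\cdots$ for the digamma function.

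For part (i) I would insert each gamma argument of $\cfct$ into the real-part form of Stirling's formula, $\log\lvert\Gamma(x+iy)\rvert\thicksim(x-\tfrac12)\log\lvert y\rvert-\tfrac{\pi}{2}\lvert y\rvert+\tfrac12\log(2\pi)$ as $\lvert y\rvert\to\infty$, and collect. Two bookkeeping facts carry the argument. First, the linear-in-$\lambda$ contributions $-\tfrac{\pi}{2}\lvert y\rvert$ must cancel between numerator and denominator; this is precisely why $\dfct$ grows polynomially rather than exponentially, and it is a stringent consistency check. Second, the surviving coefficient of $\log\lambda$ in $-2\Re L$ works out to exactly $2\alpha+1$, which produces the leading term $c_0\lvert\lambda\rvert^{2\alpha+1}$. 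The remaining Stirling corrections assemble into an asymptotic series $\sum_{j\ge1}a_j\lambda^{-j}$ in all powers of $\lambda^{-1}$ (the gamma shifts mix the odd Stirling terms into every order), and exponentiating yields $\dfct(\lambda)=c_0\lambda^{2\alpha+1}\exp\bigl(\sum_{j\ge1}a_j\lambda^{-j}\bigr)=c_0\lambda^{2\alpha+1}\bigl\{1+\sum_{j=1}^{M-1}c_j\lambda^{-j}+O(\lambda^{-M})\bigr\}$ for each $M$, as claimed.

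Part (ii) I would deduce from (i) by recording that $g(\lambda):=\log\dfct(\lambda)=(2\alpha+1)\log\lambda+O(1)$ is a symbol with $g^{(k)}(\lambda)=O(\lambda^{-k})$ for every $k\ge1$ (differentiate the expansion of (i) term by term). Fa\`a di Bruno applied to $\dfct=e^{g}$ then writes $\dfct^{(k)}$ as $\dfct$ times a sum of products $\prod_{B}g^{(\lvert B\rvert)}$ over set partitions with $\sum_{B}\lvert B\rvert=k$, each such product being $O(\lambda^{-k})$; since $\dfct=O(\lambda^{2\alpha+1})$ this gives $\lvert\dfct^{(k)}(\lambda)\rvert=O(\lambda^{2\alpha+1-k})$ as $\lambda\to\infty$. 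On the complementary range $0\le\lambda\le1$ the bound is trivial: the simple pole of $\Gamma(i\lambda)$ at the origin forces $\dfct(\lambda)=\lambda^2 h(\lambda^2)$ with $h$ real-analytic and even, so $\dfct$ and all its derivatives are bounded there. Combining the two ranges gives the stated estimate with $(1+\lvert\lambda\rvert)^{2\alpha+1-k}$.

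Finally, part (iii) is the differentiated version of the same computation. Writing $\cfct'/\cfct=(\log\cfct)'$ as the corresponding linear combination of digamma functions $\psi$ at arguments affine in $i\lambda$ and inserting $\psi(z)\thicksim\log z$, the leading $\log\lambda$ terms together with all $\lambda$-independent constants cancel---the same cancellation as in part (i)---leaving $\cfct'/\cfct=O(\lambda^{-1})$. For the second derivative one uses $\cfct''/\cfct=L''+(L')^2$: the first summand is the derivative of an $O(\lambda^{-1})$ symbol, hence $O(\lambda^{-2})$, and the square is likewise $O(\lambda^{-2})$, so $\cfct''/\cfct=O(\lambda^{-2})$. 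The main obstacle throughout is not any single estimate but the bookkeeping that forces the exponential and logarithmic cancellations, since it is exactly these cancellations that certify $\dfct$ as a genuine classical symbol of order $2\alpha+1$ rather than a function with spurious exponential or logarithmic growth.
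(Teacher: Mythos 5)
Your argument is sound, but note first that the paper itself offers no proof of this lemma: it is quoted verbatim from \cite[Lemma~2.1]{Johansen-disc}, so there is no internal argument to compare against. Your Stirling-based derivation is exactly the kind of proof that reference supplies, and all three reductions are correct: part (i) from the real part of Stirling's series, part (ii) from Fa\`a di Bruno applied to $\dfct=e^{g}$ with $g^{(k)}=O(\lambda^{-k})$ together with the double zero of $\dfct$ at the origin coming from the pole of $\Gamma(i\lambda)$, and part (iii) from the digamma expansion. Two small points deserve attention. First, when you say ``differentiate the expansion of (i) term by term,'' that step is not automatic for an arbitrary asymptotic expansion; it is legitimate here only because you are really differentiating $L=\log\cfct$ itself, whose derivatives are explicit combinations of polygamma functions each carrying its own Stirling-type expansion in the sector $\vert\arg z\vert<\pi$ --- it would be worth saying this explicitly rather than appealing to termwise differentiation of the series for $\dfct$. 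Second, your ``stringent consistency checks'' (cancellation of the $-\tfrac{\pi}{2}\vert y\vert$ terms in (i) and of the $\log\lambda$ and constant terms in (iii)) actually \emph{fail} for the formula for $\cfct$ as printed in this paper, whose numerator reads $2^{\rho}\Gamma(i\lambda)\Gamma(\tfrac12(1+i\lambda))$; with that numerator $\vert\cfct(\lambda)\vert^{-2}$ would grow exponentially and $\cfct'/\cfct$ would grow like $\log\lambda$. The Flensted-Jensen--Koornwinder normalization the paper claims to follow has numerator $2^{\rho-i\lambda}\Gamma(\alpha+1)\Gamma(i\lambda)$, for which your cancellations go through and the factor $2^{-i\lambda}$ supplies exactly the constant $-i\log 2$ needed in part (iii). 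So the discrepancy is a typo in the paper's displayed $\cfct$, not a gap in your proof --- but your write-up should carry out the bookkeeping against the correct formula rather than leaving the cancellations asserted.
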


Let $d\nu(\lambda)=d\nu_{\alpha,\beta}(\lambda)=(2\pi)^{-\frac{1}{2}}\vert\cfct(\lambda)\vert^{-2}\,d\lambda$ and denote by $L^p(d\nu)$ the associated weighted Lebesgue space on $\R^+$; note that $\cfct(\lambda)\cfct(-\lambda)=\cfct(\lambda)\overline{\cfct(\lambda)}=\vert\cfct(\lambda)\vert^2$ whenever $\alpha,\beta,\lambda\in\R$. The Jacobi transform, initially defined for, say
a function $f\in C_c^\infty(\R^+)$ by
\[\widehat{f}(\lambda)=\frac{\sqrt{\pi}}{\Gamma(\alpha+1)}\int_0^\infty f(t)\varphi_\lambda(t)\,d\mu(t)\]
extends to a unitary isomorphism from $L^2(d\mu)$ onto $L^2(d\nu)$,
and the inversion formula is the statement that
\[f(t)=\int_0^\infty\widehat{f}(\lambda)\varphi_\lambda(t)\,d\nu(\lambda)\]
holds in the $L^2$-sense, cf. \cite[Formula~4.5]{Koornwinder-newproof}. The limiting case $\alpha=\beta=-\frac{1}{2}$ is the
Fourier-cosine transform, which we will not study. One easily verifies that
$\widehat{\mathcal{L}f}(\lambda)=-(\lambda^2+\rho^2)\widehat{f}(\lambda)$.
Let $\Omega_p=\bigl\{\lambda\in\C\,:\, \vert\Im\lambda\vert<\bigl(\frac{2}{p}-1\bigr)\rho\bigr\}$, $p\in[1,2)$.

\begin{lemma}\label{lemma.Lq}
Assume $p\in[1,2)$ and $\lambda\in \Omega_p$. It follows that
$\varphi_\lambda\in L^q(d\mu)$, where $\frac{1}{p}+\frac{1}{q}=1$.
\end{lemma}
\begin{proof}See \cite[Lemma~3.1]{Koornwinder-FJ}.
\end{proof}

For later use we recall the following integral formula for the
Jacobi function $\varphi_\lambda^{(\alpha,\beta)}$ (cf.
\cite[Formula~2.21]{Koornwinder-newproof}), which is even valid
whenever $\Re\alpha>\Re\beta>-\frac{1}{2}$:
\begin{equation}\label{eqn.int.formula}
\varphi_\lambda^{(\alpha,\beta)}(t)=\frac{2}{\Delta_{\alpha,\beta}(t)}\int_0^t\cos(\lambda s) A_{\alpha,\beta}(s,t)\,ds
\end{equation}
where \begin{multline*}
A_{\alpha,\beta}(s,t)=\frac{2^{3\alpha+2\beta+\frac{1}{2}}\Gamma(\alpha+1)}{\Gamma(\alpha+\frac{1}{2})\Gamma(\frac{1}{2})} \sinh(2t)(\cosh t)^{\beta-\frac{1}{2}}\\
\times(\cosh t-\cosh s)^{\alpha-\frac{1}{2}} {_2}F_1\Bigl(\frac{1}{2}+\beta,\frac{1}{2}-\beta;\alpha+\frac{1}{2};\frac{\cosh t-\cosh s}{2\cosh t}\Bigr).
\end{multline*}
\begin{remark}\label{remark.GK}
For special values of $\alpha$ and $\beta$, determined by the root system of a
rank one Riemannian symmetric space, the functions $\varphi_\lambda$ are the
usual spherical functions of Harish-Chandra, and \eqref{eqn.int.formula}
reduces to \cite[Formulae~2.8 and 2.9]{Stanton-Tomas}. To be more precise
assume $G/K$ is a rank one Riemannian symmetric space of noncompact type, with
positive roots $\alpha$ and $2\alpha$. Furthermore let $p$ denote the
multiplicity of $\alpha$ and $q$ the multiplicity of $2\alpha$ (we allow $q$ to
be zero). With $\alpha:=\frac{1}{2}(p+q-1)$ and $\beta:=\frac{1}{2}(q-1)$ both real, and $p=2(\alpha-\beta)$ and $q=2\beta+1$, the
function $\varphi^{(\alpha,\beta)}_\lambda$ is precisely the usual elementary
spherical function $\varphi_\lambda$ as considered by Harish-Chandra, $\rho=\alpha+\beta+1=\frac{1}{2}(p+2q)$ as it should be, and $\text{dim}(G/K)=p+q+1=2(\alpha+1)$.

A similar choice of parameters $\alpha,\beta$ reveals that even spherical
analysis on Damek--Ricci spaces is subsumed by the present setup, see Section \ref{subsec.DM}.
\end{remark}

\section{A Transference Principle for Jacobi Convolution
Operators}\label{section.transference}
 Let us first recall from
\cite[Formula~(5.1)]{Koornwinder-FJ} the generalized translation $\tau_x$ of a suitable
function $f$ on $\R^+$, which is defined by
\[(\tau_xf)(y)=\int_0^\infty f(z)K(x,y,z)\,d\mu(z)\] where $K$ is an explicitly
known kernel function such that
\[
\varphi_\lambda(x)\varphi_\lambda(y)=\int_0^\infty\varphi_\lambda(z)K(x,y,z)\,d\mu(z).\]
In fact (cf. \cite[Formulae~(4.16),(4.19)]{Koornwinder-FJ}), for $\vert s-t\vert<u<s+t$,
\[\begin{split}
K(s,t,u)&=\frac{c_{\alpha,\beta}}{(\sinh s\sinh t\sinh u)^{2\alpha}}\int_0^\pi(1-\cosh^2s-\cosh^2t-\cosh^2u\\
&+2\cosh s\cosh t\cosh u\cosh y)_+^{\alpha-\beta-1}\sin^{2\beta}y\,dy\\
&=\frac{2^{\frac{1}{2}-\rho}\Gamma(\alpha+1)(\cosh s\cosh t\cosh u)^{\alpha-\beta-1}}
{\Gamma(\alpha+\frac{1}{2})(\sinh s\sinh t\sinh u)^{2\alpha}}\\
&\quad\times (1-B^2)^{\alpha-\frac{1}{2}}{_2}F_1\bigl(\alpha+\beta,\alpha-\beta;\alpha+\tfrac{1}{2};\tfrac{1}{2}(1-B)\bigr)
\end{split}\]
where $B(s,t,u)=\frac{\cosh^2s+\cosh^2t+\cosh^2u-1}{2\cosh s\cosh t\cosh u}$; elsewhere $K\equiv 0$. The associated generalized convolution product of two
functions $f,g\in L^2(d\mu)$ is defined by
\begin{equation}\label{eq.convolution}
f\star g(x)=\int_0^\infty
f(y)(\tau_xg)(y)\,d\mu(y) = \int_0^\infty\int_0^\infty f(y)g(z)K(x,y,z)\,d\mu(z)\,d\mu(y).
\end{equation}
This convolution is
associative and distributive, and by \cite[Equation~(5.4)(iv)]{Koornwinder-FJ},
$\widehat{f\star g}(\lambda)=\widehat{f}(\lambda)\widehat{g}(\lambda)$. The
usual inequalities for convolutions continue to hold, as we have the following
general form of the Young inequality.

\begin{proposition}\label{prop.young.ineq}
Let $p,q,$ and $r$ be such that $1\leq p,q,r\leq\infty$ and
$\frac{1}{p}+\frac{1}{q}-1=\frac{1}{r}$. The convolution $f\star g$ of $f\in
L^p(d\mu)$ and $g\in L^q(d\mu)$ is then well-defined as a function in
$L^r(d\mu)$, and $\|f\star g\|_r\leq\|f\|_p\|g\|_q$.
\end{proposition}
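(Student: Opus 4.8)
The plan is to exploit the fact that $(\R^+,\star,d\mu)$ carries the structure of a commutative hypergroup, so that the inequality follows from a standard interpolation scheme once three structural properties of the kernel $K$ are in hand. First I would record that $K$ is \emph{totally symmetric} in its three arguments: in the explicit formula above, both the prefactor $(\cosh s\cosh t\cosh u)^{\alpha-\beta-1}(\sinh s\sinh t\sinh u)^{-2\alpha}$ and the quantity $B(s,t,u)$ are invariant under every permutation of $(s,t,u)$, and the remaining factor depends on $s,t,u$ only through $B$. In particular $K(x,y,z)=K(y,x,z)$, whence $\tau_xg(y)=\tau_yg(x)$, and the convolution is commutative. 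Second, I would invoke positivity, $K\geq 0$ on its support, which holds in the present range $\alpha>\beta>-\frac{1}{2}$ by the classical positivity of the underlying product formula (cf.\ \cite{Koornwinder-FJ}, \cite{Gigante}). Third, I would obtain the normalization $\int_0^\infty K(x,y,z)\,d\mu(z)=1$ by evaluating the product formula $\varphi_\lambda(x)\varphi_\lambda(y)=\int_0^\infty\varphi_\lambda(z)K(x,y,z)\,d\mu(z)$ at $\lambda=i\rho$: there the second hypergeometric parameter $\frac{1}{2}(\rho+i\lambda)$ vanishes, so the defining series terminates at its constant term, $\varphi_{i\rho}\equiv 1$, and the identity collapses to the asserted normalization.

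From these three properties, the uniform boundedness of the translation operators is immediate. Positivity together with normalization gives $\|\tau_xg\|_\infty\leq\|g\|_\infty$, while Fubini's theorem combined with symmetry and normalization in the remaining slot gives $\|\tau_xg\|_1\leq\|g\|_1$; the Riesz--Thorin theorem then yields $\|\tau_xg\|_p\leq\|g\|_p$ for every $p\in[1,\infty]$, uniformly in $x$.

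With these bounds the two endpoint cases of Young's inequality follow. Using $\tau_xg(y)=\tau_yg(x)$ I would write $f\star g=\int_0^\infty f(y)\,(\tau_yg)\,d\mu(y)$ as an $L^q(d\mu)$-valued integral and apply Minkowski's integral inequality to obtain $\|f\star g\|_q\leq\int_0^\infty|f(y)|\,\|\tau_yg\|_q\,d\mu(y)\leq\|f\|_1\|g\|_q$. On the other hand, H\"older's inequality applied to $f\star g(x)=\int_0^\infty f(y)\,\tau_xg(y)\,d\mu(y)$ gives $\|f\star g\|_\infty\leq\|f\|_{q'}\,\|\tau_xg\|_q\leq\|f\|_{q'}\|g\|_q$, where $\frac{1}{q}+\frac{1}{q'}=1$. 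Fixing $g\in L^q(d\mu)$ and regarding $f\mapsto f\star g$ as a linear operator, I would interpolate between the bounds $L^1\to L^q$ and $L^{q'}\to L^\infty$ by Riesz--Thorin. At interpolation parameter $\theta\in[0,1]$ the source and target exponents satisfy $\frac{1}{p}=1-\frac{\theta}{q}$ and $\frac{1}{r}=\frac{1-\theta}{q}$, so that $\frac{1}{p}+\frac{1}{q}-1=\frac{1}{r}$ holds precisely and the operator norm is bounded by $\|g\|_q$; this is exactly the claimed inequality with its sharp constant.

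The only genuinely nontrivial ingredient is the positivity $K\geq 0$; everything else is formal manipulation of the hypergroup axioms, and I expect this to be the main obstacle. It is, moreover, precisely the point that fails for complex parameters --- there $K$ no longer defines a probability density, the measure $K(x,y,z)\,d\mu(z)$ becomes signed, and both the Minkowski and the H\"older steps above break down, consistent with the cautionary remark made in the introduction. For real parameters in the stated range, however, positivity is available and the argument goes through.
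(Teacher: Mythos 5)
Your argument is correct and is essentially the proof the paper delegates to: the paper's ``proof'' is only the citation to Flensted-Jensen--Koornwinder \cite[Theorem~5.4]{Koornwinder-FJ}, and the route you take (symmetry, positivity and normalization of $K$ via $\varphi_{i\rho}\equiv 1$, hence $\|\tau_xg\|_p\leq\|g\|_p$ as in \cite[Lemma~5.2]{Koornwinder-FJ}, then the two endpoints and Riesz--Thorin) is precisely the argument given there. Your closing observation that positivity of $K$ is the one nontrivial input, and the very point that fails for complex parameters, is consistent with the remark in the introduction of the paper.
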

\begin{proof}
See \cite[Theorem~5.4]{Koornwinder-FJ}.
\end{proof}
Important for the approach in \cite{Stanton-Tomas}, as well as later papers like \cite{Giulini-Mauceri-Meda.Crelle}, is a close connection between convolution operators on the symmetric space and (Euclidean) convolution operators acting on function on the Euclidean component $A$ in the Iwasawa decomposition of $G$. This method of `transferring' convolution operators - and norm estimates thereof - between different spaces was developed at length in \cite{CoifmanWeiss}, but the lack of structure theory in the more general setting of Jacobi analysis certainly rules out an immediate extension of \cite[Theorem~2.4]{CoifmanWeiss}. Instead one needs hypergroups for the proof.

\begin{proposition}[Transference]\label{prop.Coifman.Weiss}
Let $k$ be a $\mu$-integrable even function on $\R$ and assume Euclidean
convolution with $\Delta k$ is bounded on $L^s$. Then convolution with $k$
is a bounded operator on $L^s(d\mu)$.
\end{proposition}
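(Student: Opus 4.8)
The plan is to recast both convolutions as superpositions of translation operators and then transfer the assumed norm bound from the Euclidean translations to the hypergroup translations, in the spirit of Coifman--Weiss \cite{CoifmanWeiss} but in the hypergroup form of \cite{Gigante}. First I would record the elementary but decisive identity that follows from the full symmetry of $\Delta(x)\Delta(y)\Delta(z)K(x,y,z)$ in its three arguments (equivalently, the commutativity of $\star$):
\[
(f\star k)(x)=\int_0^\infty(\tau_u f)(x)\,(\Delta k)(u)\,du,\qquad\text{so that}\qquad C_k=\int_0^\infty \tau_u\,(\Delta k)(u)\,du .
\]
Thus $C_k$ is the average of the Jacobi translations $\tau_u$ against the weight $(\Delta k)(u)\,du$. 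The Euclidean convolution with $\Delta k$, restricted to even functions and read on $\R^+$, is the exactly analogous average $\int_0^\infty S_u\,(\Delta k)(u)\,du$ of the cosine translations $S_u g(t)=\tfrac12\bigl(g(t+u)+g(|t-u|)\bigr)$, and the proposition asks precisely that the assumed $L^s(\R)$-bound for the latter average be passed to the former.

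Second, I would run the transference proper. Each $\tau_u$ is a positivity-preserving contraction of every $L^s(d\mu)$, since $K\ge 0$ and $\int_0^\infty K(x,u,z)\,d\mu(z)=1$ (so $\tau_u 1=1$); the family $\{\tau_u\}$ is therefore uniformly bounded, which is the structural input the averaging argument requires. One then fixes $f$, truncates the $u$-integral to $[0,N]$ — legitimate because the support condition $|x-u|\le z\le x+u$ gives finite propagation — applies the hypothesis (the $L^s(\R)$-norm of convolution with $\Delta k$) fibrewise, and averages, the amenability of the additive line forcing the truncation error to $0$ as $N\to\infty$. Since the present hypotheses $\alpha>\tfrac12$, $\alpha>\beta>-\tfrac12$ place us inside the range $\alpha\ge\beta\ge-\tfrac12$ treated in \cite{Gigante}, once the identity above is in place this is exactly the cited hypergroup transference theorem, and I would invoke it.

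The hard part is the comparison of $\tau_u$ with $S_u$, which is not an identity: $\tau_u f$ smears mass across the whole interval $[|x-u|,x+u]$ with a density governed by $K$ and reweighted by $\Delta$, whereas $S_u$ charges only the two endpoints. Keeping the transferred constant finite and uniform in $u$ rests on precise control of $K$ near the boundary of its support together with the two-regime behaviour $|\Delta(t)|\lesssim t^{2\alpha+1}$ for $t\lesssim 1$ and $|\Delta(t)|\lesssim e^{2\rho t}$ for $t\gg 1$, equivalently on the $\cfct$-function asymptotics of Lemma~\ref{lemma.precise-c}; this split between the polynomial regime at the origin and the exponential regime at infinity, absent for the pure cosine hypergroup, is the genuine obstacle, and I would expect to treat the small-$u$ (near-diagonal) and large-$u$ ranges separately. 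I would also take care to carry out the averaging argument rather than a crude Schur-type kernel domination, since the latter would only control $C_k$ by convolution with $|\Delta k|$ and would therefore be useless for the signed Calderón--Zygmund kernels that the Hörmander--Mikhlin theorem ultimately feeds into this proposition.
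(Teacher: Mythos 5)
Your proposal is correct and takes essentially the same route as the paper: both reduce the statement to Gigante's hypergroup transference theorem (\cite{Gigante}, Theorem~4.6 and Corollary~4.11), and your preliminary identity $f\star k=\int_0^\infty(\tau_u f)\,(\Delta k)(u)\,du$ together with the positivity and $L^s$-contractivity of the $\tau_u$ is precisely what makes that citation applicable, the paper adding only the remark that Gigante normalizes $d\mu$ differently. Your speculative reconstruction of the internals of Gigante's argument --- in particular the classical amenability/averaging step, which as stated would need the representation property $\tau_u\tau_v=\tau_{u+v}$ that hypergroup translations do not have --- is not load-bearing, since you ultimately invoke the theorem as a black box exactly as the paper does.
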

\begin{proof}
In the present formulation, the result is to be found in \cite[Theorem~4.6, Corollary~4.11]{Gigante}, but we should point out that Gigante defines the measure $d\mu$ differently. The required changes to the proofs are straightforward, however.
\end{proof}
Let us stress that the convolution kernels are not required to be compactly supported, contrary to the classical result \cite[Theorem~2.4]{CoifmanWeiss}. Density of $C_c^\infty$ has indeed been incorporated into the proof of \cite[Theorem~4.6]{Gigante}, yielding the more general statement above.

\section{Jacobi Multipliers}\label{section.multipliers}
In the present section we introduce the notion of a multiplier for the Jacobi
transform. For some reason, this seems to have been neglected in the literature
although Jacobi analysis was definitely known to the experts working on
multiplier problems in the seventies. Be that as it may, we start out with the following natural definition.

\begin{definition}
Let $m$ be a bounded, measurable, even function on $\R$, and let $T_m$ be the
bounded linear operator defined for $f\in L^2(d\mu)$ by $\widehat{T_mf}(\lambda)=m(\lambda)\widehat{f}(\lambda)$, $\lambda\in\C$. The function $m$ is called an \emph{$L^p$-multiplier for the Jacobi transform},
with $p\in(1,\infty)$, if the operator $T_m$ extends from $L^2(d\mu)\cap
L^p(d\mu)$ to a bounded linear operator on $L^p(\R^+,d\mu)$, that is, if
there exists a constant $c_p<\infty$ such that
$\|T_mf\|_p\leq c_p\|f\|_p$ for all $f\in L^p(d\mu)$.
\end{definition}

\begin{remark}
If the parameters $(\alpha,\beta)$ are chosen so as to correspond to a rank one
symmetric space (see Remark \ref{remark.GK}), the notion of a Jacobi
multipliers coincides with the well-known notion of radial multipliers for the
spherical transform, see \cite{ClercStein}, \cite{Stanton-Tomas}, and
\cite{Anker-Annals}. If $\alpha,\beta$ are associated to Damek--Ricci spaces,
the Jacobi multipliers were studied in \cite{Anker-Damek-Yacoub}. The results
obtained below generalize those in \cite{Stanton-Tomas} and
\cite{Anker-Damek-Yacoub}, but are rank one in nature and do not generalize
multiplier results for higher rank symmetric spaces.
\end{remark} Let $\textrm{PW}(\C)$ denote the space of even, rapidly
decreasing, analytic functions on $\C$ of exponential type, that is, $f$
belongs to $\textrm{PW}(\C)$ if and only if $f$ is even and entire analytic on
$\C$ and there exist constants $A>0$, $K_n$ ($n\in\N_0$) such that $\vert f(\lambda)\vert\leq
K_n(1+\vert\lambda\vert)^{-n}e^{A\vert\Im\lambda\vert}$  for every $\lambda\in\C$, $n\in\N_0$. By
\cite[Theorem~3.4]{Koornwinder-newproof}, the Jacobi transform is a bijection
from $C_c^\infty(\R^+)$ onto $\text{PW}(\C)$.
\begin{lemma}
Let $m\in\textrm{PW}(\C)$. Then $T_mf=\kappa\star f$, where $\kappa=m^\vee$,
that is,
\[\kappa(t)=\int_0^\infty
m(\lambda)\varphi_\lambda(t)\,\frac{d\lambda}{\cfct(\lambda)\cfct(-\lambda)}.\] By
density, the conclusion remains valid if $m$ is merely a bounded, measurable,
even function on $\R$.
\end{lemma}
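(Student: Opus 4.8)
The plan is to deduce the identity from three ingredients already available: the Paley--Wiener theorem for the Jacobi transform, the convolution theorem $\widehat{f\star g}=\widehat f\,\widehat g$, and injectivity of the Jacobi transform on $L^2(d\mu)$. First I would treat the case $m\in\mathrm{PW}(\C)$. By \cite[Theorem~3.4]{Koornwinder-newproof} the Jacobi transform is a bijection of $C_c^\infty(\R^+)$ onto $\mathrm{PW}(\C)$, so there is a unique $\kappa\in C_c^\infty(\R^+)$ with $\widehat\kappa=m$; applying the inversion formula $\kappa(t)=\int_0^\infty\widehat\kappa(\lambda)\varphi_\lambda(t)\,d\nu(\lambda)$ and recalling $\cfct(\lambda)\cfct(-\lambda)=|\cfct(\lambda)|^2$ produces exactly the integral representation of $\kappa=m^\vee$ in the statement. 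Since $\kappa,f\in C_c^\infty(\R^+)\subset L^2(d\mu)$, Proposition~\ref{prop.young.ineq} shows $\kappa\star f$ is well defined, and the convolution theorem gives $\widehat{\kappa\star f}=\widehat\kappa\,\widehat f=m\,\widehat f=\widehat{T_mf}$. Because the Jacobi transform is a unitary isomorphism of $L^2(d\mu)$ onto $L^2(d\nu)$, hence injective, we conclude $T_mf=\kappa\star f$, and the same argument applies to any $f\in L^2(d\mu)$.

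For the extension to a merely bounded, measurable, even $m$ I would argue by approximation. Choose $m_n\in\mathrm{PW}(\C)$ with $m_n\to m$ pointwise a.e. and $\sup_n\|m_n\|_\infty<\infty$ (for instance by mollifying and cutting off a fixed bounded representative). For fixed $f\in L^2(d\mu)$ we then have $\widehat{T_{m_n}f}=m_n\widehat f\to m\widehat f=\widehat{T_mf}$ in $L^2(d\nu)$ by dominated convergence, since $|m_n\widehat f|\le(\sup_n\|m_n\|_\infty)\,|\widehat f|\in L^2(d\nu)$; by Plancherel, $T_{m_n}f\to T_mf$ in $L^2(d\mu)$. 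Each $T_{m_n}f=\kappa_n\star f$ with $\kappa_n=m_n^\vee$ by the first part, so $T_mf$ is the $L^2(d\mu)$-limit of the convolutions $\kappa_n\star f$, which is the precise sense in which $T_m$ is convolution against $\kappa=m^\vee$.

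The main obstacle is exactly this last step: when $m$ does not decay it need not lie in $L^2(d\nu)$, since $|\cfct(\lambda)|^{-2}\sim c_0\lambda^{2\alpha+1}$ by Lemma~\ref{lemma.precise-c}, so the defining integral for $\kappa=m^\vee$ converges only distributionally and $\kappa$ is no longer a function in $L^2(d\mu)$. Making the identity $T_mf=\kappa\star f$ literal therefore requires interpreting $\kappa$ as the inverse transform of $m$ in the sense of tempered distributions and the convolution $\kappa\star f$ through the pairing with $\tau_x f$, and then verifying that the approximants $\kappa_n\star f$ converge to this object. When $f\in C_c^\infty(\R^+)$ the analytic difficulty evaporates, because $m\widehat f\in L^1(d\nu)\cap L^2(d\nu)$ and the inversion integral $T_mf(x)=\int_0^\infty m(\lambda)\widehat f(\lambda)\varphi_\lambda(x)\,d\nu(\lambda)$ converges absolutely; the delicate point is the bookkeeping that identifies this absolutely convergent integral with a genuine convolution, which is where I would spend the most care.
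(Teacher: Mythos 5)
Your argument is correct and follows the same route as the paper, whose entire proof is the single observation that $\widehat{\kappa\star f}=\widehat{\kappa}\,\widehat{f}=m\widehat{f}$ combined with injectivity of the Jacobi transform on $L^2(d\mu)$. Your more careful treatment of the approximation step --- in particular the honest caveat that for a merely bounded $m$ the kernel $\kappa=m^\vee$ exists only distributionally, so that $T_mf$ is a convolution in the limiting sense --- supplies detail that the paper compresses into the phrase ``By density.''
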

\begin{proof}
The statement follows from the identity $\widehat{\kappa\star
f}=\widehat{\kappa}\widehat{f}=m\widehat{f}$.
\end{proof}
The following necessary condition is typical for non-Euclidean multiplier theorems, as already observed in \cite{ClercStein}.

\begin{lemma}\label{lemma.necessary}
Assume that $T_m$ is bounded on $L^p(d\mu)$ for some $p\in[1,2)$. Then $m$
extends to an even, bounded, holomorphic function in the strip $\Omega_p$
(continuous on the boundary if $p=1$).
\end{lemma}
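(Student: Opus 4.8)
The plan is to recover the values $m(\lambda)$ for complex $\lambda$ in the strip $\Omega_p$ by testing the bounded operator $T_m$ against the Jacobi functions $\varphi_\lambda$, which by Lemma~\ref{lemma.Lq} lie in $L^q(d\mu)$ exactly when $\lambda\in\Omega_p$. Write $\gamma=\frac{\sqrt{\pi}}{\Gamma(\alpha+1)}$, so that $\widehat f(\lambda)=\gamma\int_0^\infty f\varphi_\lambda\,d\mu$, and for $f\in L^p(d\mu)\cap L^2(d\mu)$ set
\[
\Phi_f(\lambda)=\int_0^\infty (T_mf)(t)\,\varphi_\lambda(t)\,d\mu(t),\qquad \lambda\in\Omega_p .
\]
Since $T_mf\in L^p(d\mu)$ with $\|T_mf\|_p\le c_p\|f\|_p$ and $\varphi_\lambda\in L^q(d\mu)$, H\"older's inequality shows $\Phi_f$ is well defined with $|\Phi_f(\lambda)|\le c_p\|f\|_p\,\|\varphi_\lambda\|_q$; the same argument extends $\widehat f$ to $\Omega_p$. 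Along the real axis, the definition of the Jacobi transform and the identity $\widehat{T_mf}=m\widehat f$ give $\Phi_f(\lambda)=\gamma^{-1}\widehat{T_mf}(\lambda)=\gamma^{-1}m(\lambda)\widehat f(\lambda)$.

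Next I would verify that $\Phi_f$ and $\widehat f$ are holomorphic on $\Omega_p$. On a compact $K\subset\Omega_p$ one has a locally uniform majorant $\sup_{\lambda\in K}|\varphi_\lambda(t)|\in L^q(d\mu)$, coming from the bound $|\varphi_\lambda(t)|\lesssim (1+t)e^{(|\Im\lambda|-\rho)t}$ together with $\sup_K|\Im\lambda|<(\tfrac2p-1)\rho$; combined with $T_mf\in L^p(d\mu)$ this justifies Morera's theorem, using that $\lambda\mapsto\varphi_\lambda(t)$ is entire. As $\Phi_f$ and $\gamma^{-1}m\widehat f$ are holomorphic and agree on $\R$, analytic continuation yields
\[
\Phi_f(\lambda)=\gamma^{-1}\,\widetilde m(\lambda)\,\widehat f(\lambda)\qquad(\lambda\in\Omega_p),
\]
where $\widetilde m(\lambda):=\gamma\,\Phi_f(\lambda)/\widehat f(\lambda)$ is holomorphic near any point with $\widehat f\neq0$. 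Two admissible choices of $f$ give extensions agreeing on $\R$, hence everywhere by the identity theorem, so $\widetilde m$ is a well-defined holomorphic extension of $m$; it is even because $\varphi_\lambda=\varphi_{-\lambda}$ forces $\Phi_f$ and $\widehat f$ to be even.

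The crux is the uniform estimate $\|\widetilde m\|_{L^\infty(\Omega_p)}\le c_p$. Fix $\lambda_0\in\Omega_p$ and choose $f$ nearly extremal for the H\"older pairing $\langle f,\varphi_{\lambda_0}\rangle$: for $1<p<2$ take $f_R=\overline{\varphi_{\lambda_0}}\,|\varphi_{\lambda_0}|^{q-2}\mathbf{1}_{[0,R]}$, so $\|f_R\|_p^p=\int_0^R|\varphi_{\lambda_0}|^q\,d\mu$ and $\int_0^\infty f_R\varphi_{\lambda_0}\,d\mu=\int_0^R|\varphi_{\lambda_0}|^q\,d\mu$. Each $f_R$ is bounded with compact support, hence lies in $L^p(d\mu)\cap L^2(d\mu)$, and $\widehat{f_R}(\lambda_0)\neq0$ for large $R$. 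Inserting $f=f_R$ into the continued identity and letting $R\to\infty$, during which $\|f_R\|_p\to\|\varphi_{\lambda_0}\|_q^{\,q-1}$ and $\widehat{f_R}(\lambda_0)\to\gamma\|\varphi_{\lambda_0}\|_q^{\,q}$, gives
\[
|\widetilde m(\lambda_0)|=\gamma\frac{|\Phi_{f_R}(\lambda_0)|}{|\widehat{f_R}(\lambda_0)|}\le\gamma\frac{c_p\,\|f_R\|_p\,\|\varphi_{\lambda_0}\|_q}{|\widehat{f_R}(\lambda_0)|}\longrightarrow c_p,
\]
since the factors $\gamma$ and $\|\varphi_{\lambda_0}\|_q^{\,q}$ cancel. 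For $p=1$ (so $q=\infty$) the $L^q$-extremizer is replaced by an approximate identity concentrated where $|\varphi_{\lambda_0}|$ is close to its supremum, with matching phase; as $\varphi_\lambda$ is bounded and continuous in $\lambda$ on the closed strip $\overline{\Omega_1}$, dominated convergence makes $\Phi_f$ continuous up to the boundary, yielding the asserted boundary continuity. The main obstacle is precisely this last step: the holomorphic extension itself is soft (Morera plus analytic continuation), but obtaining the \emph{sharp} uniform bound $c_p$ requires the extremal choice of test functions so that the growth of $\|f_R\|_p$, $\|\varphi_{\lambda_0}\|_q$ and $\widehat{f_R}(\lambda_0)$ balances exactly, leaving a constant independent of $\lambda_0$.
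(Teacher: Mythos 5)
Your argument is correct, and its skeleton coincides with the paper's: both recover $m$ at complex $\lambda$ by pairing $T_mf$ against $\varphi_\lambda\in L^{q}(d\mu)$ (Lemma \ref{lemma.Lq}) and obtain holomorphy by interchanging the $d\mu$-integration with a contour integral (Morera for you, the Cauchy integral formula plus Fubini in the paper). Where you genuinely diverge is in extracting the uniform bound. The paper reads the identity $\int_0^\infty(T_m\varphi_\lambda)f\,d\mu=m(\lambda)\int_0^\infty\varphi_\lambda f\,d\mu$ as the eigenfunction equation $T_m\varphi_\lambda=m(\lambda)\varphi_\lambda$ in $L^{p'}(d\mu)$, so that $\vert m(\lambda)\vert\,\|\varphi_\lambda\|_{p'}=\|T_m\varphi_\lambda\|_{p'}\le\|T_m\|_{L^{p'}\to L^{p'}}\|\varphi_\lambda\|_{p'}$ and the norms cancel in one line; this silently uses that $T_m$ is also bounded on the dual space $L^{p'}(d\mu)$, which holds by self-adjointness of $T_m$. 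You instead keep $T_m$ acting on $L^p$ and feed in the truncated extremizers $f_R=\overline{\varphi_{\lambda_0}}\,\vert\varphi_{\lambda_0}\vert^{q-2}\mathbf{1}_{[0,R]}$, for which $\|f_R\|_p$, $\|\varphi_{\lambda_0}\|_q$ and $\widehat{f_R}(\lambda_0)$ cancel exactly in the limit $R\to\infty$; this is the same duality computation unwound by hand, costs a few more lines, but avoids invoking boundedness on $L^{p'}$. Your consistency check that two choices of $f$ yield the same extension, and your explicit $L^q$-majorant justifying Morera, are in fact more careful than the paper's write-up; the $p=1$ boundary-continuity step is only sketched in your proposal, but the paper is no more detailed there.
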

The symmetric space case is covered by \cite{ClercStein} (see also
\cite[Theorem~4.4]{Stanton-Tomas}), whereas the case of Damek--Ricci spaces
appeared in the form of \cite[Proposition~4.10]{Anker-Damek-Yacoub}. The proof
we present below is an easy adaptation of the latter.
\begin{proof}
By Lemma~\ref{lemma.Lq}, $\varphi_\lambda$ belongs to $L^{p'}(d\mu)$ for
all $\lambda\in \Omega_p$. Let $f\in C_c^\infty(\R^+)$ and notice that
\[
\int_0^\infty(T_m\varphi_\lambda)f(t)\,d\mu(t) = \int_0^\infty\varphi_\lambda(t)T_mf(t)\,d\mu(t)
= \widehat{T_mf}(\lambda) =
m(\lambda)\int_0^\infty\varphi_\lambda(t)f(t)\,d\mu(t).\]
Therefore $T_m\varphi_\lambda=m(\lambda)\varphi_\lambda$, and it follows that
$\vert m(\lambda)\vert \|\varphi_\lambda\|_{L^{p'}} = \|m(\lambda)\varphi_\lambda\|_{L^{p'}} = \|T_m\varphi_\lambda\|_{L^{p'}} \leq \|T_m\|_{L^{p'}\to L^{p'}} \|\varphi_\lambda\|_{L^{p'}}$ for all $\lambda\in\Omega_p$, that is, $\sup\{\vert m(\lambda)\vert\,:\,\lambda\in\Omega_p\}\leq
\|T_m\|_{L^{p'}\to L^{p'}}$.

As in \cite[Lemma~3.1]{Koornwinder-FJ}, we presently prove that $m$ extends to
a holomorphic function on $\Omega_p$, which is continuous on
$\overline{\Omega}_p$ whenever $p=1$. Indeed, by Fubini's Theorem and the
Cauchy integral formula for the holomorphic function
$\lambda\mapsto\varphi_\lambda(t)$,
\[\begin{split}
\widehat{f}(\lambda_0) & = \int_0^\infty f(t)\varphi_{\lambda_0}\,d\mu(t) =
\int_0^\infty f(t)\Bigl\{\frac{1}{2\pi
i}\oint_{\mathcal{C}}\frac{\varphi_\lambda(t)}{\lambda-\lambda_0}\,d\lambda\Bigr\}\,d\mu(t)\\
&=\frac{1}{2\pi i}\oint_{\mathcal{C}}\int_0^\infty
f(t)\frac{\varphi_\lambda(t)}{\lambda-\lambda_0}\,d\mu(t)\,d\lambda =
\frac{1}{2\pi
i}\oint_{\mathcal{C}}\frac{\widehat{f}(\lambda)}{\lambda-\lambda_0}\,d\lambda,
\end{split}\]
where $\mathcal{C}$ is a contour encircling $\lambda_0$ within $\Omega_p$. Thus
$\widehat{f}$ is holomorphic in $\lambda_0\in\Omega_p$, since, for every $h$
small enough that $\lambda_0+h$ remains in the convex domain $\Omega_p$,
\[
\frac{\widehat{f}(\lambda_0+h)-\widehat{f}(\lambda_0)}{h}=\frac{1}{2\pi
i}\oint_{\mathcal{C}}\frac{\widehat{f}(\lambda)}{(\lambda-\lambda_0-h)(\lambda-\lambda_0)}\,d\lambda \longrightarrow \frac{1}{2\pi
i}\oint_{\mathcal{C}}\frac{\widehat{f}(\lambda)}{(\lambda-\lambda_0)^2}\,d\lambda\text{
as } h\to 0.
\]
\end{proof}
Another decidedly non-Euclidean result pertains to the Kunze--Stein phenomenon for the convolution structure:
\begin{lemma}\label{lemma.KunzeStein}
Let $p\in[1,2)$. There exists a constant $c_p$ such that
\begin{enumerate}
\romannum \item  if $f\in L^2(d\mu)$ and $g\in L^p(d\mu)$, then $f\star g\in
L^2(d\mu)$ with $\|f\star g\|_2\leq c_p\|f\|_2\|g\|_p$; \item if $f,g\in
L^2(d\mu)$ and $\frac{1}{p}+\frac{1}{q}=1$, then $f\star g\in L^q(d\mu)$ with
$\|f\star g\|_q\leq c_p\|f\|_2\|g\|_2$.
\end{enumerate}
\end{lemma}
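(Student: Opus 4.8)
The plan is to deduce both statements from a single elementary observation: on the transform side, convolution by $g$ acts as multiplication by $\widehat g$, so the $L^2(d\mu)$-operator norm of $f\mapsto f\star g$ is governed by $\|\widehat g\|_\infty$, and the whole point is that this sup-norm is controlled by an $L^p$-norm of $g$ for $p<2$ — this is the non-Euclidean improvement.

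First I would prove (i). Fix $g\in L^p(d\mu)\cap L^2(d\mu)$ and set $T_gf=f\star g$. Since $\widehat{f\star g}=\widehat f\,\widehat g$ and the Jacobi transform is a unitary isomorphism of $L^2(d\mu)$ onto $L^2(d\nu)$, the Plancherel identity gives
\[\|f\star g\|_{L^2(d\mu)}=\|\widehat f\,\widehat g\|_{L^2(d\nu)}\le\|\widehat g\|_{L^\infty(\R)}\,\|f\|_{L^2(d\mu)},\]
so that $\|T_g\|_{L^2\to L^2}\le\|\widehat g\|_{L^\infty(\R)}$. Everything therefore reduces to the multiplier bound $\|\widehat g\|_{L^\infty(\R)}\le c_p\,\|g\|_{L^p(d\mu)}$. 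Writing $\widehat g(\lambda)=\frac{\sqrt\pi}{\Gamma(\alpha+1)}\int_0^\infty g(t)\varphi_\lambda(t)\,d\mu(t)$ and applying Hölder's inequality for the real parameter $\lambda$, I get $|\widehat g(\lambda)|\le\frac{\sqrt\pi}{\Gamma(\alpha+1)}\,\|g\|_{L^p(d\mu)}\,\|\varphi_\lambda\|_{L^{p'}(d\mu)}$, so it suffices to bound $\|\varphi_\lambda\|_{L^{p'}(d\mu)}$ uniformly in $\lambda\in\R$.

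This uniform bound is the crux of the matter and the only real obstacle. I would obtain it from the integral representation \eqref{eqn.int.formula}: under the standing assumption $\alpha>\beta>-\frac12$ the kernel $A_{\alpha,\beta}(s,t)$ is nonnegative for $0<s<t$ — the elementary factors are manifestly positive, and positivity of the hypergeometric factor follows from Euler's integral representation, which is available because $\alpha+\frac12>\beta+\frac12>0$. Replacing $\cos(\lambda s)$ by $1$ then yields the pointwise domination $|\varphi_\lambda(t)|\le\varphi_0(t)$ for every real $\lambda$, whence $\|\varphi_\lambda\|_{L^{p'}(d\mu)}\le\|\varphi_0\|_{L^{p'}(d\mu)}$. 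Since $0\in\Omega_p$ for every $p\in[1,2)$, Lemma~\ref{lemma.Lq} guarantees $\varphi_0\in L^{p'}(d\mu)$ (concretely, $\varphi_0(t)$ decays like $(1+t)e^{-\rho t}$ while $\Delta(t)\lesssim e^{2\rho t}$, making $\varphi_0^{p'}\Delta$ integrable exactly when $p'>2$). This proves the multiplier bound with $c_p=\frac{\sqrt\pi}{\Gamma(\alpha+1)}\|\varphi_0\|_{L^{p'}(d\mu)}$, and a routine density argument — approximating $g\in L^p(d\mu)$ by functions in $L^p\cap L^2$ and using that the bound is uniform — removes the auxiliary hypothesis $g\in L^2(d\mu)$.

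Finally I would derive (ii) from (i) by duality. Since $q'=p$, it suffices to estimate $\langle f\star g,h\rangle$ for $h$ in the dense subclass $C_c^\infty(\R^+)$ of the unit ball of $L^p(d\mu)$. Writing $f^*(x)=\overline{f(x)}$ and exploiting the full symmetry of the kernel $K$ in its three arguments together with Fubini's theorem, one obtains the adjoint identity $\langle f\star g,h\rangle=\langle g,f^*\star h\rangle$. Hence, by Cauchy--Schwarz and part (i) applied to $f^*\in L^2(d\mu)$ and $h\in L^p(d\mu)$,
\[|\langle f\star g,h\rangle|\le\|g\|_{2}\,\|f^*\star h\|_{2}\le c_p\,\|f\|_{2}\,\|g\|_{2}\,\|h\|_{L^p(d\mu)}.\]
Taking the supremum over such $h$ with $\|h\|_{L^p(d\mu)}\le1$ and invoking $(L^p)^*=L^q$ identifies $f\star g$ as an element of $L^q(d\mu)$ with $\|f\star g\|_{q}\le c_p\|f\|_2\|g\|_2$, the constant being the same as in (i). Note that $f\star g$ is already a genuine function by Proposition~\ref{prop.young.ineq}, so no regularization is needed. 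As anticipated, once the pointwise estimate $|\varphi_\lambda|\le\varphi_0$ is in hand the remainder is soft functional analysis.
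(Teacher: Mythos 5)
Your proof is correct and follows essentially the same route as the paper: Plancherel plus H\"older reduces (i) to the uniform bound $\|\varphi_\lambda\|_{L^{p'}(d\mu)}\le\|\varphi_0\|_{L^{p'}(d\mu)}$ for real $\lambda$, and (ii) is obtained by duality from the permutation symmetry of the kernel $K$ together with part (i). The only (immaterial) difference is that you derive the pointwise domination $|\varphi_\lambda|\le\varphi_0$ from the cosine representation \eqref{eqn.int.formula} and positivity of $A_{\alpha,\beta}$, whereas the paper quotes the Laplace-type integral representation to get $|\varphi_{\lambda}(t)|\le\varphi_{i\Im\lambda}(t)$.
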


\begin{proof}
We recreate the beautiful proof from
\cite[Theorem~5.5(i)]{Koornwinder-FJ}, since it demonstrates a technique we shall employ in later sections. As for the first statement, since $\widehat{g}$ is well-defined
and holomorphic in every $\lambda_0\in\Omega_p$, it follows from H\"older's
inequality that
$\vert\widehat{g}(\lambda_0)\vert\leq\|g\|_p\|\varphi_{\lambda_0}\|_q$.
Moreover, according to
\cite[Remark~6]{Koornwinder-newproof}, $\varphi_\lambda$ has a Laplace
type integral representation,
%\begin{equation}\label{eqn.Laplace-type}
%\begin{split}
\[\varphi_\lambda^{(\alpha,\beta)}(t) = c_{\alpha,\beta}\int_0^1\int_0^\pi\vert\cosh t+\sinh t\, re^{i\psi}\vert^{i\lambda-\rho}
\quad\times (1-r^2)^{\alpha-\beta-1}r^{2\beta+1}(\sin \psi)^{2\beta}\,d\psi\,dr,\]
for $t>0$, by means of which we infer that
$\vert\varphi_{\lambda_0}^{(\alpha,\beta)}(t)\vert\leq\varphi_{i\Im\lambda_0}^{(\alpha,\beta)}(t)$. For $f,g\in C_c^\infty(\R^+)$ it thus follows that
$\|g\star
f\|_2^2=\|\widehat{g}\widehat{f}\|_2^2\leq
\|\widehat{g}\|_\infty^2\|\widehat{f}\|_2^2\leq\|f\|_2^2\|g\|_p^2\|\varphi_0^{(\alpha,\beta)}\|_q^2\leq c_p\|f\|_2^2$, where
$c_p=\|g\|_p^2\|\varphi_0^{(\alpha,\beta)}\|_q^2$ is
finite. The assertion of (i) now follows by density. \medskip

Let $k\in L^p(d\mu)$ and take $f,g$ to be continuous, compactly supported
functions. Since the kernel $K(\cdot,\cdot,\cdot)$ is invariant under
permutations of the three arguments, it follows that
\[\biggl|\int f\star g(x)k(x)\,d\mu(x)\biggr|\leq \int\vert g(x)\vert(\vert
k\vert\star\vert f\vert)(x)\,d\mu(x),\] which is bounded by $\|g\|_2\| \vert
k\vert\star \vert f\vert\|_2\leq c_p\|g\|_2\|k\|_p\|f\|_2$ by the H\"older
inequality and (i) of the
present Lemma. By duality it follows that
\[\|f\star g\|_q=\sup\biggl\{\biggl|\int f\star g(x)k(x)\,d\mu(x)\biggr|\,:\,
k\in L^p(d\mu), \|k\|_p\leq 1\biggr\}\leq c_p\|f\|_2\|g\|_2.\] The assertion of
(ii) thus follows once more by density.
\end{proof}

\begin{corollary}\label{cor.ClercStein}
Let $\delta>0$ be fixed and assume $k$ is an even function belonging
simultaneously to all the spaces $L^r(d\mu)$ for $r\in (1,1+\delta)$. Then
$f\mapsto k\star f$ is a bounded operator on $L^s(d\mu)$ for all $s\in
(1,\infty)$.
\end{corollary}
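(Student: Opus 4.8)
The plan is to deduce everything from the Kunze--Stein phenomenon (Lemma~\ref{lemma.KunzeStein}), since Young's inequality (Proposition~\ref{prop.young.ineq}) alone cannot suffice: as $k$ need not lie in $L^1(d\mu)$, convolution with $k$ is \emph{a priori} only bounded between \emph{different} $L^p$-spaces, and interpolating such off-diagonal bounds with the single diagonal bound $L^2(d\mu)\to L^2(d\mu)$ returns only the exponent $s=2$. First I would reduce the range of exponents. The convolution kernel $K$ is real and symmetric in its three arguments, so a direct computation with Fubini's theorem shows that the adjoint of $T_k=k\star\,\cdot\,$ on $L^2(d\mu)$ is $T_{\overline k}$; since $\overline k$ satisfies the same hypotheses as $k$, it suffices to prove boundedness of $T_k$ on $L^s(d\mu)$ for $s\in(1,2]$ and for all admissible even kernels, the range $s\in[2,\infty)$ then following by duality.

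Fix $s\in(1,2]$ and let $f,h\in C_c^\infty(\R^+)$. By duality, $\|k\star f\|_s=\sup\{|\langle k\star f,h\rangle|:\|h\|_{s'}\le 1\}$, and regrouping the triple integral defining $\langle k\star f,h\rangle$ by means of the symmetry of $K$ gives the identity $\langle k\star f,h\rangle=\langle k,\overline{f\star\overline h}\rangle$. Hölder's inequality against $k\in L^r(d\mu)$, for any $r\in(1,1+\delta)$, therefore yields $|\langle k\star f,h\rangle|\le\|k\|_r\,\|f\star\overline h\|_{r'}$. Consequently the whole problem is reduced to the bilinear convolution estimate
\[\|f\star h\|_{r'}\le C\,\|f\|_s\|h\|_{s'},\]
which I must establish for \emph{some} $r\in(1,1+\delta)$ (equivalently, for some large finite $r'$) depending on $s$ and $\delta$.

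To prove this bilinear estimate I would interpolate, using the bilinear Riesz--Thorin theorem, between two endpoints for the map $(f,h)\mapsto f\star h$: the Kunze--Stein bound $L^2(d\mu)\times L^2(d\mu)\to L^{q_0}(d\mu)$ of Lemma~\ref{lemma.KunzeStein}(ii), valid for any $q_0\in(2,\infty)$, and the trivial Young bound $L^1(d\mu)\times L^\infty(d\mu)\to L^\infty(d\mu)$ from Proposition~\ref{prop.young.ineq}. Writing exponents as reciprocals, the target point $(1/s,1/s';1/r')$ is the convex combination of $(1/2,1/2;1/q_0)$ and $(1,0;0)$ with weight $\theta=2/s-1\in[0,1)$ on the second endpoint; matching the first two coordinates is automatic once $\theta=2/s-1$, and the third coordinate forces $1/r'=(1-\theta)/q_0=2/(s'q_0)$. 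Since $q_0\in(2,\infty)$ is at our disposal, $1/r'$ may be taken to be any sufficiently small positive number, in particular small enough that $r'>(1+\delta)/\delta$, i.e. $r\in(1,1+\delta)$, as required; the interpolated norm is $C=c_{r}^{\,1-\theta}$. This proves the bilinear estimate and hence, by the two reductions above, boundedness of $T_k$ on every $L^s(d\mu)$ with $s\in(1,\infty)$.

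The main obstacle is conceptual rather than computational: one must recognize that the borderline Young relation $1/s+1/s'-1=0$ falls exactly short of producing a finite target exponent $r'$, and that the Kunze--Stein improvement $L^2\star L^2\subset L^{q_0}$ with $q_0<\infty$ is precisely the gain needed to cross this borderline. A secondary point is that one should interpolate the \emph{bilinear} form rather than fix $f$ and interpolate the linear operator $h\mapsto f\star h$: the latter would produce the hybrid bound $\|f\|_2^{1-\theta}\|f\|_1^{\theta}$ instead of a single power of $\|f\|_s$. The remaining steps---justifying the integral manipulations on the dense class $C_c^\infty(\R^+)$, tracking complex conjugates, and extending from $C_c^\infty$ to all of $L^s(d\mu)$ by density---are routine.
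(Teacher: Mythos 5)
Your argument is correct and is exactly the route the paper indicates (its entire proof is the remark that the corollary ``follows easily from Lemma~\ref{lemma.KunzeStein} and interpolation''): you supply the missing details by dualizing to reduce to $s\in(1,2]$, reducing via the symmetry of $K$ and H\"older to the bilinear bound $\|f\star h\|_{r'}\lesssim\|f\|_s\|h\|_{s'}$, and obtaining that bound by bilinear Riesz--Thorin interpolation between Lemma~\ref{lemma.KunzeStein}(ii) and the trivial Young endpoint, with the free parameter $q_0$ pushing $r$ into $(1,1+\delta)$. The only (immaterial) alternative implementation is to interpolate part (i) of the Kunze--Stein lemma, viewed as $L^2\times L^p\to L^2$, against Young's $L^1\times L^1\to L^1$ to get $L^s\star L^\rho\subseteq L^s$ directly for $s\in(1,2)$ and $\rho$ near $1$; your exponent bookkeeping and your observation that Young's inequality alone cannot close the diagonal are both correct.
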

This follows easily from Lemma \ref{lemma.KunzeStein} and interpolation, thus
generalizing \cite[Lemma~5.4]{Stanton-Tomas} to our setting.

\begin{theorem}\label{thm.hormander.multiplier}
Let $m$ be an even, holomorphic function on $\Omega_1$ that satisfies the
H\"ormander-type condition
\[\forall\lambda=x+iy\in\Omega_1, a=0,1,\ldots,N: \Bigl|\frac{d^a}{dx^a}m(x+iy)
\Bigr|\leq c_{a,y}(1+\vert x\vert)^{-a},\] where $N$ is the least integer
greater than or equal to $\alpha+\frac{3}{2}$.
\begin{enumerate} \romannum \item Then $m$ is an $L^p$-multiplier for the
Jacobi transform for $p\in (1,\infty)$. \item The operator $T_m$ is weak type
$(1,1)$, that is,
\[\mu(\{t\,:\,\vert T_mf(t)\vert>a\})\leq\frac{\|m\|_{\text{mult}}\|f\|_{L^1(d\mu)}}{a}\text{ for all }a>0,\]
where \[\|m\|_{\text{mult}} =\max_{0\leq i\leq N}\sup_{\lambda\in\Omega_1}(1+\vert\lambda\vert)^i\vert m^{(i)}(\lambda)\vert.\]
\end{enumerate}
\end{theorem}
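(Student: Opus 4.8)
The plan is to realize $T_m$ as convolution with the kernel $\kappa=m^\vee$ and, following \cite{Stanton-Tomas}, to split it into a local and a long-range piece. Fix a smooth even cutoff $\chi$ with $\chi\equiv1$ on $[-1,1]$ and $\operatorname{supp}\chi\subset[-2,2]$, and write $\kappa=\kappa_0+\kappa_\infty$ with $\kappa_0=\chi\kappa$, $\kappa_\infty=(1-\chi)\kappa$, so that $T_m=T_0+T_\infty$ with $T_0f=\kappa_0\star f$ and $T_\infty f=\kappa_\infty\star f$. The splitting respects the two regimes of the density: near the origin $\Delta(t)\approx t^{2\alpha+1}$, so the space looks Euclidean of ``dimension'' $2\alpha+2$, whereas for $t\gg1$ one has $\Delta(t)\approx e^{2\rho t}$ and the analysis is genuinely non-Euclidean.

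For the long-range piece I would first prove pointwise decay of $\kappa(t)$ for large $t$. Since $m$ is holomorphic and bounded on $\Omega_1=\{|\Im\lambda|<\rho\}$, I would shift the contour in $\kappa(t)=\int_0^\infty m(\lambda)\varphi_\lambda(t)\,d\lambda/(\cfct(\lambda)\cfct(-\lambda))$ into the strip; because $\varphi_\lambda(t)$ behaves like $\cfct(\lambda)e^{(i\lambda-\rho)t}$ at infinity, pushing $\Im\lambda$ up to (nearly) $\rho$ produces a gain of $e^{-2\rho t}$. Integrating by parts repeatedly in $\lambda$, and controlling the derivatives of $m$ by the H\"ormander hypothesis and the growth of $1/(\cfct(\lambda)\cfct(-\lambda))$ and its derivatives by \eqref{eqn.c-fct.est} and Lemma \ref{lemma.precise-c}, yields $|\kappa(t)|\lesssim e^{-2\rho t}(1+t)^{-M}$ for $t\gg1$, the admissible number of integrations by parts being limited by $N$. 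Against $\Delta(t)\approx e^{2\rho t}$ this shows $\kappa_\infty\in L^r(d\mu)$ for all $r\in(1,1+\delta)$ and in fact $\kappa_\infty\in L^1(d\mu)$; hence Corollary \ref{cor.ClercStein} makes $T_\infty$ bounded on every $L^s(d\mu)$, $s\in(1,\infty)$, while $T_\infty\colon L^1\to L^1$ is bounded by Young's inequality (Proposition \ref{prop.young.ineq}) and so is trivially of weak type $(1,1)$.

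The local piece is handled by transference (Proposition \ref{prop.Coifman.Weiss}): it suffices that Euclidean convolution with $\Delta\kappa_0$ be bounded on $L^s(\R)$, i.e.\ that the Euclidean cosine transform $M$ of $\Delta\kappa_0$ be an ordinary Mihlin multiplier on $\R$. Inserting the integral representation \eqref{eqn.int.formula} and exchanging the order of integration exhibits $\Delta(t)\kappa(t)=2\int_0^t A(s,t)g(s)\,ds$, where $g$ is the even Euclidean inverse cosine transform of the symbol $\lambda\mapsto m(\lambda)/(\cfct(\lambda)\cfct(-\lambda))$. Since near the origin $A(s,t)$ behaves like $t(t^2-s^2)^{\alpha-\frac12}$, the map $g\mapsto\Delta\kappa_0$ is a fractional-integral (smoothing) operator whose effect on the Fourier side compensates the order-$(2\alpha+1)$ growth of the density recorded in Lemma \ref{lemma.precise-c}(i); consequently $M$ is an order-zero symbol comparable to $m$ up to lower-order terms. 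Verifying the one-dimensional Mihlin bounds $|M(\xi)|\lesssim1$ and $|M'(\xi)|\lesssim(1+|\xi|)^{-1}$ then requires controlling $m$ and its derivatives to order $N$, and this is exactly where the threshold $N\ge\alpha+\frac32$ enters; the classical H\"ormander--Mihlin theorem on $\R$ finishes the $L^s$ bound for the local part. Adding the two pieces proves (i), and part (ii) follows by running a Calder\'on--Zygmund argument on the locally homogeneous space $(\R^+,d\mu)$ for $T_0$---verifying the integrated H\"ormander condition for $\kappa_0$ from the same local kernel estimates, and using that $T_m$ is bounded on $L^2(d\mu)$ by Plancherel since $m$ is bounded---together with the $L^1$ bound already established for $T_\infty$.

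The main obstacle is the local estimate on $M$. Relating the transferred Euclidean symbol to $m$ amounts to understanding the discrepancy between the Jacobi transform, which near the origin behaves like a Hankel transform of order $\alpha$, and the Euclidean cosine transform; this discrepancy is encoded jointly in the kernel $A(s,t)$ of \eqref{eqn.int.formula} and in the $\cfct$-function, and controlling its derivatives uniformly down to $t\to0$ is the delicate point that fixes the smoothness order $N$. The precise asymptotic expansions of $\cfct$ in Lemma \ref{lemma.precise-c} are indispensable here, as are the matching long-range asymptotics of $\varphi_\lambda$ used for the decay of $\kappa_\infty$.
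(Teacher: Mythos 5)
Your overall architecture (realize $T_m$ as convolution with $\kappa=m^\vee$, split into a local and a long-range piece, transfer the local piece to a Euclidean multiplier) is the same as the paper's, but your treatment of the long-range piece contains a genuine error that breaks both the $L^1$ endpoint and the weak type $(1,1)$ claim. You assert that shifting the contour ``up to (nearly) $\rho$'' yields $\vert\kappa(t)\vert\lesssim e^{-2\rho t}(1+t)^{-M}$ and hence $\kappa_\infty\in L^1(d\mu)$. This cannot be achieved: $m$ is only holomorphic on the \emph{open} strip $\Omega_1$, with constants $c_{a,y}$ that may blow up as $\vert y\vert\to\rho$, so the contour can only be moved to height $\varepsilon\rho$ with $\varepsilon<1$ strictly. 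The best available bound is that of Lemma \ref{lemma.constrct.Kepsilon}, namely $\vert\kappa_\infty(t)\vert\leq c_\varepsilon e^{-(1+\varepsilon)\rho t}(1+K_\varepsilon(t))$ with $K_\varepsilon$ merely in $L^2$, and against $\Delta(t)\simeq e^{2\rho t}$ the product $e^{-(1+\varepsilon)\rho t}e^{2\rho t}=e^{(1-\varepsilon)\rho t}$ grows. Indeed $\kappa_\infty\in L^1(d\mu)$ is provably false for admissible symbols: $m_a(\lambda)=(\lambda^2+\rho^2)^{-a/2}$ with $a$ large satisfies the hypotheses of the theorem, yet Lemma \ref{lemma.tec.real}\refr{lemmar3} shows its global kernel lies in $L^p(d\mu)$ only for $p\in(1,\infty]$; integrability would force $m_a$ to extend continuously to $\overline{\Omega}_1$ by Lemma \ref{lemma.necessary}, contradicting the singularity at $\lambda=i\rho$. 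Consequently your appeal to Young's inequality for the $L^1\to L^1$ bound of $T_\infty$, and the ``trivial'' weak type $(1,1)$ deduced from it, collapses. The correct and genuinely non-Euclidean route is the paper's: show only that $k_2\in L^r(d\mu)$ for $r\in(1,2)$ (Proposition \ref{prop.4.5}) and invoke the Kunze--Stein phenomenon (Corollary \ref{cor.ClercStein}) for the strong $(s,s)$ bounds, $1<s<\infty$, while the weak $(1,1)$ bound for the global part requires a separate argument in the style of Anker and Str\"omberg exploiting the exponential decay in $\Re\lambda$.

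On the local piece your route also diverges from the paper's at the decisive step and is left as an unproved assertion. You propose to insert the integral representation \eqref{eqn.int.formula} and argue that the transferred Euclidean symbol ``is an order-zero symbol comparable to $m$ up to lower-order terms''; the paper instead substitutes the Bessel expansion of Theorem \ref{thm.asymptoticEXP} into $m^\vee$, isolates an $L^1(d\mu)$ error term (Proposition \ref{prop.L1-error}) by integrating by parts $N$ times against the density $\vert\cfct(\lambda)\vert^{-2}$, and verifies a Marcinkiewicz condition on dyadic intervals for the remaining Hankel-type term (Lemma \ref{lemma.Marcinkiewicz-on-R}) before transferring via Proposition \ref{prop.Coifman.Weiss}. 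Your heuristic that the fractional-integration effect of $A(s,t)$ cancels the order-$(2\alpha+1)$ growth of $\vert\cfct\vert^{-2}$ is the right intuition, but the uniform control of the resulting symbol and its derivative down to $t\to0$ (including the non-integer $\alpha$ and the borderline case where $\mathcal{J}_0$ appears) is precisely the content of Sections \ref{section.local}; as written, your proposal does not supply it.
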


This H\"ormander-type multiplier theorem generalizes
\cite[Theorem~5.1]{Stanton-Tomas} and also provides a proof of the multiplier
theorem for Damek--Ricci spaces that was suggested just before
\cite[Theorem~4.17]{Anker-Damek-Yacoub}. We have more to say on this in
Subsection \ref{subsec.DM} towards the end of the paper.

The proof follows that in \cite{Stanton-Tomas} closely but we should add that a different approach, based on the Abel transform, was adopted in \cite{Anker-Annals}. The advantage of working with the asymptotic expansion is that we can easily obtain other interesting results in harmonic analysis as well: Fractional integrals are covered in Section \ref{sec.fract}, multipliers that are not integrable at infinity in \cite{Johansen-exp2} (see also \cite[Section~3]{Giulini-Mauceri-Meda.Crelle}), and almost everywhere convergence of the inverse Jacobi transform in \cite{Johansen-disc}.

Surely we still in effect work on the kernel level
in the proof of our multiplier theorem, just as Anker did it, so we still have
to estimate effectively the local and global parts of the kernel. The local
part will be covered by results in Section \ref{section.local}, whereas the
global part will be handled with the help of estimates from Section
\ref{section.long-range}.

To be more precise, let $m$ satisfy a H\"ormander-type condition as in the
multiplier theorem above and let $\psi$ be a fixed smooth, even function on
$\R$ such that $0\leq\psi\leq 1$, $\psi(t)\equiv 1$ for $\vert t\vert\leq
R_0^{1/2}$, and $\psi(t)\equiv 0$ for $\vert t\vert\geq R_0$; the constant
$R_0$ will be specified later in Theorem \ref{thm.asymptoticEXP}.
Let $k_1=m^\vee\psi$ be the local part of the kernel $\kappa=m^\vee$ and
$k_2=m^\vee(1-\psi)$ the global part. We analyze $k_1$ in Section
\ref{section.local} and $k_2$ in Section \ref{section.long-range}. It turns out that $k_2$ is easy to handle. The local part $k_1$ is troublesome, but since convolution with $k_1$ will be realized as a convolution operator on a space of homogeneous type, standard covering arguments will establish the weak type $(1,1)$ bound.

\section{Local Analysis}\label{section.local}
In what follows, $J_\mu(z)$ is the usual Bessel function of order $\mu$ and
$\mathcal{J}_\mu(z)$ is the modified Bessel function defined by
$\mathcal{J}_\mu(z)=2^{\mu-1}\Gamma(\frac{1}{2})\Gamma(\mu+\frac{1}{2})z^{-\mu}J_\mu(z)$.
The present section is devoted to the proof of the multiplier theorem for the local part of the kernel. It relies on the following Jacobi function-analogue of \cite[Theorem~2.1]{Stanton-Tomas}, see also
\cite[Section~2]{Schindler} for similar results. For the real parameter-case see also \cite{Brandolini-Gigante}. For the statement we introduce the quantity $\Delta'(t)=(\sinh
t)^{\alpha+\frac{1}{2}}(\cosh t)^{\beta+\frac{1}{2}}$.

\begin{theorem}\label{thm.asymptoticEXP}
Assume $\alpha>\frac{1}{2}$, $\alpha >\beta>-\frac{1}{2}$, and that $\lambda$ belongs either to a compact subset of
$\C\setminus (-i\N)$ or a set of the form
\[D_{\varepsilon,\gamma}=\{\lambda\in\C\,:\, \gamma\geq\Im\lambda\geq -\varepsilon\vert\Re\lambda\vert\}\]
for some $\varepsilon,\gamma\geq 0$. There exist constants $R_0, R_1\in
(1,\sqrt{\frac{\pi}{2}})$ with $R_0^2<R_1$ such that for every $M\in\N$ and
every $t\in[0,R_0]$
\begin{eqnarray}
\label{eqn.expansion.full}\varphi_\lambda^{(\alpha,\beta)}(t) =
\frac{2\Gamma(\alpha+1)}{\Gamma(\alpha+\tfrac{1}{2})\Gamma(\tfrac{1}{2})}\frac{t^{\alpha+\frac{1}{2}}}{\Delta'(t)} \sum_{m=0}^\infty
a_m(t)t^{2m}\mathcal{J}_{m+\alpha}(\lambda t)
\\
\varphi_\lambda^{(\alpha,\beta)}(t) =
\frac{2\Gamma(\alpha+1)}{\Gamma(\alpha+\tfrac{1}{2})\Gamma(\tfrac{1}{2})}\frac{t^{\alpha+\frac{1}{2}}}{\Delta'(t)} \sum_{m=0}^M
a_m(t)t^{2m}\mathcal{J}_{m+\alpha}(\lambda t)+E_{M+1}(\lambda t),
\end{eqnarray}
where
\begin{equation}\label{thm.asymptoticEXP.a.ests}
a_0(t)\equiv 1\text{ and } \vert a_m(t)\vert\leq c_\alpha(t)R_1^{-(\Re\alpha+m-\frac{1}{2})}\text{ for all } m\in\N.
\end{equation}
Additionally, the error term $E_{M+1}$ is bounded as follows:
\begin{equation}\label{thm.asymptoticEXP.error}
\vert E_{M+1}(\lambda t)\vert \leq \begin{cases} c_Mt^{2(M+1)}&\text{if }\vert\lambda t\vert\leq 1\\
c_M t^{2(M+1)}\vert\lambda t\vert^{-(\Re\alpha+M+1)}&\text{if
}\vert\lambda t\vert>1.\end{cases}
\end{equation}
\end{theorem}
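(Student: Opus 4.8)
The plan is to derive the asymptotic expansion by transforming the defining eigenvalue equation \eqref{eqn.eigeneqn} into a perturbed Bessel equation and then solving it via a series ansatz whose leading term is exactly a modified Bessel function. Let me sketch the approach.
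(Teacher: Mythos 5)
Your proposal is not a proof; it is a one-sentence statement of intent, and all of the substantive content of the theorem is left unaddressed. The paper's own argument does not go through the differential equation at all: it starts from the integral representation \eqref{eqn.int.formula}, expands the hypergeometric factor ${_2}F_1\bigl(\tfrac{1}{2}+\beta,\tfrac{1}{2}-\beta;\alpha+\tfrac{1}{2};\cdot\bigr)$ in a series with coefficients $d_j$, expands $\bigl(\tfrac{2\cosh t-2\cosh s}{t^2-s^2}\bigr)^z$ in powers of $t^2-s^2$ using the Stanton--Tomas estimate \eqref{eqn.ST.2.11}, and integrates term by term against $\cos(\lambda s)$ to produce the modified Bessel functions $\mathcal{J}_{m+\alpha}(\lambda t)$. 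An ODE route (writing $\mathcal{L}_{\alpha,\beta}+(\lambda^2+\rho^2)$ as a perturbation of the Bessel operator and making a series ansatz) is a legitimate alternative starting point, but it only yields a \emph{formal} recursion for the coefficients $a_m$; the theorem's actual content is quantitative, and nothing in your plan indicates how the recursion would deliver it.

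Concretely, you would still have to produce: (1) the geometric decay $\vert a_m(t)\vert\leq c_\alpha(t)R_1^{-(\Re\alpha+m-\frac{1}{2})}$ with an explicit $R_1\in(1,\sqrt{\pi/2})$, $R_0^2<R_1$ --- in the paper this comes from the radius of convergence of the $(t^2-s^2)$-expansion of $\cosh t-\cosh s$ and from the summability of $(d_j)$, which is precisely where the hypothesis $\alpha>\frac{1}{2}$ enters (since $\vert d_j\vert\sim j^{-(\alpha+\frac{1}{2})}$); (2) the justification that the resulting double series converges absolutely and uniformly for $\lambda$ in $D_{\varepsilon,\gamma}$, which requires uniform bounds on $\mathcal{J}_{m+\alpha}(\lambda t)$ for complex $\lambda$; and (3) the two-regime error bound \eqref{thm.asymptoticEXP.error}, whose decay $\vert\lambda t\vert^{-(\Re\alpha+M+1)}$ for $\vert\lambda t\vert>1$ is obtained by repeated integration by parts in the Bessel integral representation (Szeg\H{o}'s device) together with Watson's asymptotics --- this does not follow from any ODE ansatz and is the technically hardest part of the proof. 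As it stands, the proposal contains no verifiable mathematics and cannot be assessed as correct.
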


\begin{figure}[h]
\centering
\includegraphics{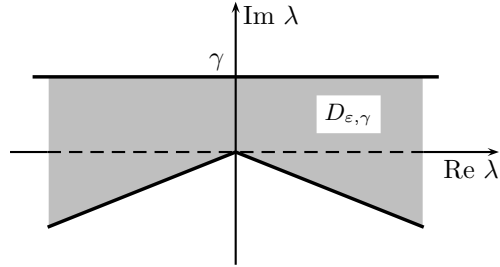}
\vspace{-5mm}
\caption{The set $D_{\varepsilon,\gamma}$.}
\end{figure}

\begin{remark}
The restriction on $\alpha$ is not the optimal one, as far as general Jacobi
analysis goes, but is needed for certain convergence arguments that are
important for this approach, and the restriction on $\lambda$ will also be
needed later, in Theorem \ref{thm.bigt}, where one needs a suitable analogue of
the classical Gangolli estimates. Another justification for imposing bounds on
$\Im\lambda$ is that the function $\varphi_\lambda$ is bounded precisely
when $\vert\Im\lambda\vert\leq\vert\Re\rho\vert$. Yet another one
is that in order to get multiplier theorems for the Jacobi transform, one has
to impose some such bound on $\Im\lambda$; see Lemma
\ref{lemma.necessary}.
\end{remark}

Let $\psi$ be an even, smooth function on $\R$ such that $0\leq\psi\leq 1$,
$\psi(t)\equiv 1$ whenever $\vert t\vert\leq R_0^{1/2}$, and $\psi(t)\equiv 0$
whenever $\vert t\vert\geq R_0$. Let $N$ be the least integer greater than or equal to
$\alpha+\frac{3}{2}$. From now on, whenever we use a function $m$, with certain
properties, we tacitly assume in addition that $m$ is rapidly decreasing.
Passage to the general case is then facilitated by standard techniques
involving approximate units. More specifically, let $h_t$ denote the heat
kernel for the Jacobi Laplacian $\mathcal{L}_{\alpha,\beta}$. By the inversion formula for the Jacobi transform, $e^{t\mathcal{L}_{\alpha,\beta}}f=h_t\star f$ for all $t>0$ and $f\in
L^2(d\mu)$, where $\star$ refers to the Jacobi convolution
\eqref{eq.convolution}. The ensuing operator semigroup is
ultracontractive, so it follows that convolution with $h_t$ is an approximate
unit.

\begin{proposition}\label{prop.L1-error} If $m\in C^N(\R)$ is even and
\begin{enumerate}
\romannum \item $D^am(0)=0$ whenever $0\leq a\leq N$, and \item $\vert
D^a_\lambda m(\lambda)\vert\leq c_\alpha(1+\vert\lambda\vert)^{-a}$ whenever
$0\leq a\leq N$,
\end{enumerate}
then there exists a function $e_0\in L^1(d\mu)$ such that
\[m^\vee(t)\psi(t)=c_0\psi(t)\frac{t^{\alpha+\frac{1}{2}}}{\Delta'(t)}\int_0^\infty
m(\lambda)\mathcal{J}_\alpha(\lambda t)\vert\cfct(\lambda)\vert^{-2}\,d\lambda
+ e_0(t).\]
\end{proposition}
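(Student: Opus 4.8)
The plan is to substitute the local expansion of Theorem~\ref{thm.asymptoticEXP} into the representation $m^\vee(t)=\int_0^\infty m(\lambda)\varphi_\lambda(t)\,\vert\cfct(\lambda)\vert^{-2}\,d\lambda$ and to peel off the $m=0$ term as the displayed main term, lumping everything else into $e_0$. For a rapidly decreasing $m$ (which we may assume, as noted before the statement) this integral converges absolutely, since $\varphi_\lambda$ is bounded for real $\lambda$ while $\vert\cfct(\lambda)\vert^{-2}$ grows only polynomially by Lemma~\ref{lemma.precise-c}(i). On the support of $\psi$ we have $t\in[0,R_0]$, and the real half-line is the instance $\varepsilon=\gamma=0$ of the admissible set $D_{\varepsilon,\gamma}$, so the expansion applies. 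Fixing an integer $M$ to be chosen below and inserting the finite form of the expansion, the term $m=0$ (with $a_0\equiv1$) reproduces exactly the asserted main term, and the remainder is
\begin{multline*}
e_0(t)=c_0\psi(t)\frac{t^{\alpha+\frac12}}{\Delta'(t)}\sum_{m=1}^{M}a_m(t)t^{2m}\int_0^\infty m(\lambda)\mathcal{J}_{m+\alpha}(\lambda t)\vert\cfct(\lambda)\vert^{-2}\,d\lambda\\
+\psi(t)\int_0^\infty m(\lambda)E_{M+1}(\lambda t)\vert\cfct(\lambda)\vert^{-2}\,d\lambda.
\end{multline*}

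The structural fact that makes the $L^1(d\mu)$ estimate transparent is the identity $\Delta_{\alpha,\beta}(t)=2^{2\rho}\Delta'(t)^2$, which turns the measure $\frac{t^{\alpha+1/2}}{\Delta'(t)}\,d\mu(t)$ into $2^{2\rho}t^{\alpha+1/2}\Delta'(t)\,dt$, comparable to $t^{2\alpha+1}\,dt$ near the origin (the only place where integrability is in question, since $[0,R_0]$ is compact). Granting for a moment the uniform inner bound $\bigl\vert\int_0^\infty m(\lambda)\mathcal{J}_{m+\alpha}(\lambda t)\vert\cfct(\lambda)\vert^{-2}\,d\lambda\bigr\vert\lesssim c_\alpha\,t^{-(2\alpha+2)}$, the coefficient terms are controlled: with $\vert a_m(t)\vert\le c_\alpha(t)R_1^{-(\Re\alpha+m-\frac12)}$ from \eqref{thm.asymptoticEXP.a.ests}, the extra factor $t^{2m}$ makes the $m$-th summand $O(t^{2m-2\alpha-2})$, so after multiplication by $t^{2\alpha+1}$ it is $O(t^{2m-1})$, integrable on $[0,R_0]$ for every $m\ge1$. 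The error term is handled directly from \eqref{thm.asymptoticEXP.error}: splitting the $\lambda$-integral at $\vert\lambda t\vert=1$ and using $\vert\cfct(\lambda)\vert^{-2}\sim\lambda^{2\alpha+1}$ together with $\vert m\vert\le c_\alpha$, both pieces are $O(c_\alpha t^{2M-2\alpha})$ once $M>\alpha+1$, so the error contribution is $O(t^{2M+1})$ against $d\mu$. This is exactly why the leading term must be excluded from $e_0$: the same inner bound gives only $O(t^{-1})$ for it against $d\mu$, reflecting its Calder\'on--Zygmund nature, which is treated separately.

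The crux, and the main obstacle, is the uniform inner estimate itself, and in particular obtaining it with a constant depending on $m(\cdot)$ only through the H\"ormander quantity $c_\alpha$ of hypothesis (ii) rather than through the rapid-decrease seminorms; only such a bound survives the subsequent passage to a general multiplier via the approximate unit. I would establish it by integrating by parts $N$ times in $\lambda$ over $[0,\infty)$. Hypothesis (i), that $m$ and its first $N$ derivatives vanish at the origin, combined with the vanishing of $\vert\cfct(\lambda)\vert^{-2}$ at $\lambda=0$, annihilates every boundary term there; the derivative bounds (ii) on $m$ and Lemma~\ref{lemma.precise-c}(ii) on $\dfct=\vert\cfct\vert^{-2}$ bound the differentiated amplitude by $c_\alpha(1+\vert\lambda\vert)^{2\alpha+1-k}$; and the repeated antiderivatives of the oscillatory factor are organized by the Bessel identity $\frac{d}{dz}\mathcal{J}_\mu(z)=-\frac{z}{2\mu+1}\mathcal{J}_{\mu+1}(z)$ together with the elementary size bounds $\vert\mathcal{J}_\mu(z)\vert\lesssim1$ for $\vert z\vert\le1$ and $\vert\mathcal{J}_\mu(z)\vert\lesssim\vert z\vert^{-\mu-\frac12}$ for $\vert z\vert\ge1$.

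The delicate point is the bookkeeping of the balance between the polynomial growth of $\vert\cfct\vert^{-2}$ and the Bessel decay, so that each integration by parts trades a power $t^{-1}$ for a compensating power coming from the shifted range of integration, leaving the net power $t^{-(2\alpha+2)}$ unchanged; the choice of $N$ as the least integer $\ge\alpha+\frac32$ is exactly what renders the low-order tails (those $m$ with $1\le m\lesssim\alpha$) absolutely convergent after differentiation, the higher ones needing no integration by parts at all. Assembling the three estimates yields $\|e_0\|_{L^1(d\mu)}\lesssim c_\alpha$, and in particular $e_0\in L^1(d\mu)$, as claimed.
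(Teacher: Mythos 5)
Your proposal follows essentially the same route as the paper's proof: the same peeling-off of the $m=0$ term of the local expansion into the displayed main term, the same splitting of the error-term integral at $\vert\lambda t\vert=1$ against the $\cfct$-function growth, and the same integration by parts organized by the Bessel recurrence to control the coefficient terms, with the number of integrations by parts adapted to $m$. The only caveat is that the clean uniform inner bound $t^{-(2\alpha+2)}$ is slightly optimistic for the lowest-order terms (the piece $\lambda\le 1/t$ contributes $t^{-2k}$ once $k>\alpha+1$ integrations by parts are performed), but the resulting bound $t^{2(m-k)}$ for the $m$-th summand, combined with the factor $t^{2\alpha+1}$ from $d\mu$, still yields integrability for every $m\ge 1$, which is exactly how the paper argues.
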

\begin{remark}
The proof we are about to present remains valid, word for word, even when $\alpha$ and $\beta$ are allowed to be complex, but satisfying the added requirement that $\Re\alpha>\Re\beta+1$ (needed for an inversion formula). One should not use $\vert\cfct(\lambda)\vert^{-2}$ but rather $(\cfct(\lambda)\cfct(-\lambda))^{-1}$ as density, and one would have to make sense of the statement that the resulting function $e_0$ be integrable. As a matter of fact, if $\alpha$ and $\beta$ were to be complex, then $\Delta$ would be complex as well, with
\[\begin{split}
\Re\Delta_{\alpha,\beta}(t) & = \Delta_{\Re\alpha,\Re\beta}(t)\cdot \cos\bigl(2\Im\alpha\ln(2\sinh t)+2\Im\beta\ln(2\cosh t) \bigr)\\
\Im\Delta_{\alpha,\beta}(t) & = \sin\bigr(2\Im\alpha\ln(2\sinh t)+2\Im\beta\ln(2\cosh t)\bigr),
\end{split}\]
implying that
\[
\vert\Re\Delta_{\alpha,\beta}(t)\vert \leq\vert\Delta_{\Re\alpha,\Re\beta}(t)\vert \simeq\begin{cases} t^{2\Re\alpha+1}&\text{for }
t\lesssim 1\\
e^{2(\Re\rho)t}&\text{ for } t\gg 1\end{cases}\text{ and }
\vert\Im\Delta_{\alpha,\beta}(t)\vert \leq 1\text{ for all }t
\]
What the proof below will then show is that $e_0$ is integrable with respect to the real measure $\vert\Delta_{\Re\alpha,\Re\beta}(t)\vert dt$. Integrability with respect to $(\Im \Delta(t))dt$ would then follow at once. This easy remedy will \emph{not} yield complex parameter-analogues of the Young inequality, the Kunze--Stein phenomenon and H\"older's inequality, however, and this is precisely the reason why we cannot establish the multiplier theorem for complex parameters $\alpha,\beta$. We are grateful to Margit R\"osler for pointing out this problem.
\end{remark}

\begin{proof}
Choose $M=N$ in Theorem \ref{thm.asymptoticEXP} and define $e_0$ by
\begin{equation}\label{eqn.e0}
\begin{split}
e_0(t) &=
c_0\psi(t)\frac{t^{\alpha+\frac{1}{2}}}{\Delta'(t)}\sum_{m=1}^Nt^{2m}a_m(t)\int_0^\infty\mathcal{J}_{m+\alpha}(\lambda
t)m(\lambda)\vert\cfct(\lambda)\vert^{-2}\,d\lambda
\\
&\qquad+\psi(t)\int_0^\infty E_{N+1}(\lambda
t)m(\lambda)\vert\cfct(\lambda)\vert^{-2}\,d\lambda.\end{split}
\end{equation}
Furthermore let
\[\begin{split}
\varepsilon_m(t)&:=t^{2m}\int^\infty_0\mathcal{J}_{m+\alpha}(\lambda
t)m(\lambda)\vert\cfct(\lambda)\vert^{-2}\,d\lambda,\quad 1\leq m\leq N,\\
\varepsilon_{N+1}(t)&:=\int_0^\infty E_{N+1}(\lambda
t)m(\lambda)\vert\cfct(\lambda)\vert^{-2}\,d\lambda.
\end{split}\]
By \eqref{thm.asymptoticEXP.error} and Lemma \ref{lemma.precise-c},
\[\begin{split}
\vert\varepsilon_{N+1}(t)\vert&\leq
\|m\|_\infty\Bigl\{\int_{\{\lambda\,:\,\vert\lambda t\vert\leq 1\} }\vert
E_{N+1}(\lambda t)\vert\vert\cfct(\lambda)\vert^{-2}\,d\lambda +
\int_{\{\lambda\,:\,\vert\lambda t\vert\geq 1\} }\vert E_{N+1}(\lambda
t)\vert\vert\cfct(\lambda)\vert^{-2}\,d\lambda\Bigr\}\\
&\leq c_N\|m\|_\infty\Bigl\{\int_0^{\frac{1}{t}}
t^{2(N+1)}\vert\cfct(\lambda)\vert^{-2}\,d\lambda + \int_{\frac{1}{t}}^\infty
t^{2(N+1)}\vert\lambda t\vert^{-(\alpha+N+1)}\vert\cfct(\lambda)\vert^{-2}\,d\lambda)\Bigr\}\\
&\leq c_N\|m\|_\infty\Bigl\{\int_0^{\frac{1}{t}}t^{2(N+1)}\,d\lambda +
\int_{\frac{1}{t}}^\infty t^{2(N+1)}\vert\lambda t\vert^{-(\alpha+N+1)}(1+\vert\lambda\vert)^{2\alpha}\,d\lambda\Bigr\}
\end{split}.\]

The latter integral is convergent, since the combined power of
$\vert\lambda\vert$ in the integrand is roughly $\alpha-N-1<-2$,
whence $\vert\varepsilon_{N+1}(t)\vert\leq
c_N\|m\|_\infty(t^{2N+1}+t^{N+1-\alpha})$. Since $N+1-\alpha>0$, and $\psi$ is supported in a neighborhood around $0$, we conclude
that at least the function \[t\mapsto \psi(t)\int E_{N+1}(\lambda
t)m(\lambda)\vert\cfct(\lambda)\vert^{-2}\,d\lambda\] is integrable.

In order to show that the $N$ terms $t\mapsto \psi(t)\frac{t^{\alpha+\frac{1}{2}}}{\Delta'(t)}\varepsilon_m(t)$ in \eqref{eqn.e0} are integrable as well, we must proceed with more care, especially in regards to the term where $m=1$. First recall from \cite[p.~18]{Watson} that
$z^{-1}\frac{d}{dz}\mathcal{J}_{\mu-1}(z)=-c_{\mu-1}\mathcal{J}_\mu(z)$ for a
suitable constant $c_{\mu-1}$, and thus
$-c_\mu\mathcal{J}_\mu(z)=\frac{1}{z}\frac{d}{dz}(-\frac{1}{c_{\mu-1}}\frac{1}{z}\frac{d}{dz}\mathcal{J}_{\mu-2}(z))=-\frac{1}{c_{\mu-1}}(\frac{1}{z}\frac{d}{dz})^2\mathcal{J}_{\mu-2}(z)$,
and so on. Correspondingly write
$\mathcal{J}_{m+\alpha}(z)=c_N(z^{-1}\frac{d}{dz})^N\mathcal{J}_{m+\alpha-N}(z)$ for a suitable constant $c_N$. As a consequence of Lemma \ref{lemma.precise-c} it holds that
$\vert(\frac{d}{d\lambda}\circ\frac{1}{\lambda})^k\dfct(\lambda)\vert\simeq
(1+\vert\lambda\vert)^{2\alpha+1-2k}$. Since $m$ is rapidly decreasing by assumption, we infer that
$\vert(\frac{d}{d\lambda}\circ\frac{1}{\lambda})^k m(\lambda)\dfct(\lambda)\vert\simeq
(1+\vert\lambda\vert)^{2\alpha+1-2k}$. It is a straightforward exercise in bookkeeping and integration by parts to see that
\[\begin{split}
\varepsilon_m(t) & = c_k't^{2m}\int_0^\infty
m(\lambda)\vert\cfct(\lambda)\vert^{-2}\bigl(-\frac{1}{\lambda
t}\frac{d}{d(\lambda t)}\bigr)^k\mathcal{J}_{m+\alpha-k}(\lambda t)\,d\lambda\\
&=
c_k't^{2(m-k)}\int_0^\infty\mathcal{J}_{m+\alpha-k}(\lambda t)\bigl(\frac{d}{d\lambda}\circ\frac{1}{\lambda}\bigr)^k(m(\lambda)\dfct(\lambda))\,d\lambda,
\end{split}\]
for every integer $k$ (in particular $k=N$), yielding the estimate
\[
\vert\varepsilon_m(t)\vert \lesssim t^{2(m-N)}\int_0^\infty(1+\vert\lambda\vert)^{2\alpha+1-2N}\cdot 1\,d\lambda \leq t^{2(m-N)}\int_0^\infty(1+\vert\lambda\vert)^{-2}\,d\lambda\lesssim t^{2(m-N)}
\]
since $N\geq\alpha+\frac{3}{2}$ and $\vert\mathcal{J}_{m+\alpha-N}(\lambda t)\vert\lesssim 1$. For $t$ small we thus conclude that
\[\biggl|\psi(t)\frac{t^{\alpha+\frac{1}{2}}}{\Delta'(t)}\varepsilon_m(t)\vert\Delta(t)\vert\biggr|\lesssim t^{\alpha+\frac{1}{2}}t^{2(m-N)}t^{\alpha+\frac{1}{2}}\leq t^{2m-2},\]
which is integrable in a neighborhood of zero for $m\geq 1$. However, for $m=1$, we should not integrate by parts $N$ times if the resulting
modified Bessel function $\mathcal{J}_{m+\alpha-N}$ happen to end up with a negative order. For $m\geq 2$, however, the above calculations are fine, so it merely remains to consider the special case where $m=1$ and $N=1+\alpha$, $\alpha$ thus being an \emph{integer}. This is precisely what was considered towards the end of the proof of \cite[Proposition~4.1]{Stanton-Tomas} (cf. page 268, loc. cit), so we shall not repeat the easy argument.
The point is that for these parameters, there is a logarithmic blow-up of the integrand near zero which has to be compensated by estimating $\mathcal{J}_0(\lambda t)$ by $\vert\lambda t\vert^{-\frac{1}{2}}$. This finishes the proof that $e_0$ belongs to $L^1(d\mu)$.
\end{proof}
\begin{corollary}\label{cor.4.3}
Let $t\in[0,R_0]$. If $m\in C^N(\R)_{\text{even}}$ satisfies
\begin{itemize}
\item $D_\lambda^a m(0)=0$ whenever $0\leq a\leq N$, and \item $\vert
D_\lambda^a m(\lambda)\vert\leq c_a(1+\vert\lambda\vert)^{-a}$ whenever $0\leq
a\leq N$, \end{itemize} then
\[m^\vee(t) = c_0\int_0^\infty m(\lambda)\mathcal{J}_\alpha(\lambda
t)\vert\cfct(\lambda)\vert^{-2}\,d\lambda + \sum_{m=1}^Ne_m(t) + e(t),\] where
$\vert e(t)\vert\leq 1$, $\vert e_1(t)\vert\leq ct^{-2\alpha-1}$, and $\vert
e_m(t)\vert\leq ct^{2(m-1)-N}$ when $m>1$.
\end{corollary}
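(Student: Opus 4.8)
The plan is to read the statement off the decomposition already assembled in the proof of Proposition~\ref{prop.L1-error}, now keeping track of pointwise rather than $L^1$ bounds and dispensing with the cut-off $\psi$, which plays no role once $t$ is confined to $[0,R_0]$. Since $\alpha,\beta,\lambda$ are real, $\cfct(\lambda)\cfct(-\lambda)=\vert\cfct(\lambda)\vert^2$, so $m^\vee(t)=\int_0^\infty m(\lambda)\varphi_\lambda(t)\vert\cfct(\lambda)\vert^{-2}\,d\lambda$. First I would insert the finite expansion \eqref{eqn.expansion.full} of Theorem~\ref{thm.asymptoticEXP} with $M=N$, valid for every $t\in[0,R_0]$, and integrate term by term against $m(\lambda)\vert\cfct(\lambda)\vert^{-2}\,d\lambda$; the absolute convergence justifying this interchange was already established in the proof of Proposition~\ref{prop.L1-error}. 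Because $a_0\equiv 1$, this splits $m^\vee(t)$ into the leading Bessel integral (the $m=0$ term), the $N$ correction integrals $\varepsilon_m(t)$ for $1\le m\le N$, and the error integral $\varepsilon_{N+1}(t)$, all carrying the prefactor $c_0\,t^{\alpha+\frac12}/\Delta'(t)$ except for $\varepsilon_{N+1}$.

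Next I would separate that prefactor from the leading term. On $[0,R_0]$ one has $t^{\alpha+\frac12}/\Delta'(t)=(t/\sinh t)^{\alpha+\frac12}(\cosh t)^{-(\beta+\frac12)}=1+O(t^2)$, so writing this factor as $1$ plus a remainder turns the leading term into $c_0\int_0^\infty m(\lambda)\mathcal{J}_\alpha(\lambda t)\vert\cfct(\lambda)\vert^{-2}\,d\lambda$ plus an $O(t^2)$ multiple of the same (bounded) integral. I would then \emph{define} $e_m(t)=c_0\,\frac{t^{\alpha+\frac12}}{\Delta'(t)}\,a_m(t)\,\varepsilon_m(t)$ for $1\le m\le N$, and let $e(t)$ collect the $O(t^2)$ discrepancy together with the error contribution $\varepsilon_{N+1}(t)$. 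The leading integral is bounded uniformly in $t\in[0,R_0]$ (since $m$ is rapidly decreasing, $\mathcal{J}_\alpha$ is bounded and $\vert\cfct\vert^{-2}\lesssim(1+\vert\lambda\vert)^{2\alpha+1}$), and $\vert\varepsilon_{N+1}(t)\vert\lesssim t^{N+1-\alpha}+t^{2N+1}$ was proved via \eqref{thm.asymptoticEXP.error} and Lemma~\ref{lemma.precise-c} in Proposition~\ref{prop.L1-error}; together these give $\vert e(t)\vert\le 1$ on $[0,R_0]$.

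It then remains to bound $e_m$, and since both the prefactor and $a_m(t)$ are bounded on $[0,R_0]$ by \eqref{thm.asymptoticEXP.a.ests}, this reduces to pointwise bounds on the $\varepsilon_m(t)$. Here I would reuse the mechanism from Proposition~\ref{prop.L1-error}: write $\mathcal{J}_{m+\alpha}(z)=c_k(z^{-1}\tfrac{d}{dz})^k\mathcal{J}_{m+\alpha-k}(z)$, integrate by parts $k$ times, and control the transferred derivatives of $m(\lambda)\dfct(\lambda)$ by Lemma~\ref{lemma.precise-c}. For $m\ge 2$ the order $m+\alpha-N$ stays nonnegative, no difficulty arises, and one obtains the stated bound $\vert e_m(t)\vert\le ct^{2(m-1)-N}$. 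The main obstacle is the term $m=1$: carrying the recurrence to the full order $N$ would leave a modified Bessel function $\mathcal{J}_{1+\alpha-N}$ of nonpositive order together with a logarithmically divergent $\lambda$-integral. As towards the end of the proof of \cite[Proposition~4.1]{Stanton-Tomas}, one stops one differentiation short and compensates the borderline behaviour using $\vert\mathcal{J}_0(z)\vert\lesssim\vert z\vert^{-1/2}$; this is exactly what produces the anomalous exponent in $\vert e_1(t)\vert\le ct^{-2\alpha-1}$.
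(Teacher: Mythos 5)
Your reconstruction is correct and is essentially the paper's own route: Corollary \ref{cor.4.3} is stated without proof and is meant to be read off the proof of Proposition \ref{prop.L1-error}, whose decomposition into the leading Bessel integral, the correction terms $\varepsilon_m$ ($1\le m\le N$) and the error term $\varepsilon_{N+1}$ you reuse verbatim, correctly supplying the two details the paper leaves implicit (absorbing the prefactor $t^{\alpha+\frac12}/\Delta'(t)=1+O(t^2)$ and $\varepsilon_{N+1}$ into $e$, and the $m=1$ anomaly handled as at the end of the proof of Proposition~4.1 in Stanton--Tomas). The one soft spot, which you inherit from the paper rather than introduce, is the exponent bookkeeping: the $N$-fold integration by parts as performed there yields $\vert\varepsilon_m(t)\vert\lesssim t^{2(m-N)}$, which near $t=0$ is weaker than the stated $t^{2(m-1)-N}$, so obtaining the corollary's exact exponents would require optimizing the number of integrations by parts for each $m$ rather than always taking $k=N$; since $t^{2(m-N)}\Delta(t)$ is still integrable near zero for $m\ge 2$, nothing downstream is affected.
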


\begin{lemma}\label{lemma.Marcinkiewicz-on-R}
There are functions $\varepsilon_0\in L^1$ and $k_0$ bounded on $\R$
such that
\begin{enumerate}
\romannum \item $\Delta(t)k_1(t)=k_0(t)+\varepsilon_0(t)$ for $t\geq 0$, and
\item $k_0$ is continuously differentiable on all dyadic intervals
$I_j^-=(-2^{j+1},-2^j)$ and $I_j^+=(2^j,2^{j+1}), j\in\Z$ and satisfies
\[\sup_{j\in\Z} \int_{I_j^-\cup I_j^+}\vert k_0'(t)\vert\,dt<\infty.\]
\end{enumerate}
The function $k_0$ is therefore a classical multiplier on $\R$, and convolution
with the function $\Delta k_1$ thus a bounded operator on $L^s$ for
$s\in(1,\infty)$.
\end{lemma}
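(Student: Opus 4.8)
The plan is to insert the pointwise decomposition of $m^\vee$ supplied by Corollary~\ref{cor.4.3}, valid precisely on the support $[0,R_0]$ of $\psi$, into $\Delta k_1=\Delta\psi\,m^\vee$, and to read off the two constituents. After the routine preliminary normalisation of splitting off a smooth, compactly $\lambda$-supported low--frequency piece of $m$ (whose contribution to the kernel is a Schwartz function, hence harmless), we may assume $m$ vanishes to order $N$ at the origin, so that Corollary~\ref{cor.4.3} applies. I would then set
\[
k_0(t)=c_0\,\Delta(t)\psi(t)\int_0^\infty m(\lambda)\mathcal J_\alpha(\lambda t)\dfct(\lambda)\,d\lambda,\qquad
\varepsilon_0(t)=\Delta(t)\psi(t)\Bigl(e(t)+\sum_{m=1}^N e_m(t)\Bigr),
\]
so that statement (i) holds by construction and both functions are supported in $[-R_0,R_0]$ (extended evenly to $\R$).

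That $\varepsilon_0\in L^1(\R)$ is the easy half. By compact support it suffices to check integrability near $0$, where $|\Delta(t)|\lesssim t^{2\alpha+1}$. Feeding in the bounds of Corollary~\ref{cor.4.3}, namely $|e|\le1$, $|e_1(t)|\lesssim t^{-2\alpha-1}$ and $|e_m(t)|\lesssim t^{2(m-1)-N}$ for $m\ge2$, the three types of summand are of size $t^{2\alpha+1}$, $1$ and $t^{2\alpha+2m-1-N}$ respectively. The first two are plainly integrable, and since $N<\alpha+\tfrac52$ the exponent $2\alpha+2m-1-N$ exceeds $-1$ for every $m\ge2$; hence $\varepsilon_0\in L^1(\R)$ and convolution with $\varepsilon_0$ is bounded on every $L^s$ by Young's inequality.

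The substance of the lemma is statement (ii): that $k_0$ is bounded and has total variation uniformly bounded over the dyadic annuli $I_j^\pm$. Since $k_0$ is supported near the origin, only the scales $2^j\to0$ matter, so everything hinges on the behaviour as $t\to0^+$ of the oscillatory integral $I(t)=\int_0^\infty m(\lambda)\mathcal J_\alpha(\lambda t)\dfct(\lambda)\,d\lambda$ and of $\Delta(t)I(t)$. Because $m$ is only assumed to satisfy H\"ormander bounds, the integrand is not absolutely integrable at the relevant scale and the estimate must exploit the oscillation of the Bessel factor. I would proceed exactly as in the proof of Proposition~\ref{prop.L1-error}: using the recurrence $z^{-1}\tfrac{d}{dz}\mathcal J_{\mu-1}=-c_{\mu-1}\mathcal J_\mu$ to write $\mathcal J_\alpha(\lambda t)=c_k\,t^{-2k}(\tfrac1\lambda\tfrac{d}{d\lambda})^k\mathcal J_{\alpha-k}(\lambda t)$ and integrating by parts to transfer $(\tfrac{d}{d\lambda}\circ\tfrac1\lambda)^k$ onto $m(\lambda)\dfct(\lambda)$, which by Lemma~\ref{lemma.precise-c} and the H\"ormander bounds (available up to order $N\ge\alpha+\tfrac32$) decays like $(1+\lambda)^{2\alpha+1-2k}$. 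Combining this with the small- and large-argument asymptotics of $\mathcal J_{\alpha-k}$ yields $|\Delta(t)I(t)|\lesssim1$, and differentiating in $t$ (the derivative falling on $\Delta\psi$, on the Bessel factor via the same recurrence, or producing an extra power of $\lambda$ reabsorbed by one further integration by parts) gives the dyadic bound $\int_{I_j^\pm}|k_0'(t)|\,dt\lesssim1$ uniformly in $j$. This is the Jacobi analogue of the kernel estimates in \cite[Section~5]{Stanton-Tomas}.

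With (i) and (ii) established, $k_0$ satisfies the hypotheses of the Marcinkiewicz multiplier theorem and is therefore a classical multiplier on $\R$, while convolution with $\varepsilon_0\in L^1$ is bounded by Young; hence Euclidean convolution with $\Delta k_1=k_0+\varepsilon_0$ is bounded on $L^s$ for $1<s<\infty$, which is exactly the hypothesis feeding Proposition~\ref{prop.Coifman.Weiss}. I expect the genuine difficulty to lie entirely in the uniform-in-$j$ estimate for $k_0'$: the number of admissible integrations by parts is capped by the available H\"ormander derivatives of $m$ and by the requirement that the intermediate Bessel orders $\alpha-k$ not drift too far negative, so the power bookkeeping is delicate, and — as already flagged in the proof of Proposition~\ref{prop.L1-error} — the borderline case of integral $\alpha$ forces one to absorb a logarithmic blow-up by invoking the sharp bound $|\mathcal J_0(\lambda t)|\lesssim|\lambda t|^{-1/2}$. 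Keeping every constant uniform across the scales $2^j\to0$ is where the argument is truly technical.
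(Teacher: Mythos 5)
Your decomposition is exactly the paper's: one cuts off low frequencies with a smooth even $\Phi$ supported in $\{\vert\lambda\vert\geq 1\}$, checks that $\Phi m$ satisfies the hypotheses of Proposition~\ref{prop.L1-error}, and sets $k_0=c_0\psi\Delta\frac{t^{\alpha+1/2}}{\Delta'}\int_0^\infty\Phi m\,\mathcal{J}_\alpha(\lambda t)\dfct\,d\lambda$ and $\varepsilon_0=\psi\Delta e_0$; your verification that $\varepsilon_0\in L^1$ from the error bounds of Corollary~\ref{cor.4.3} is the same computation as the paper's appeal to $e_0\in L^1(d\mu)$. So statement (i) and the $L^1$ half are fine and match.

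The gap is in your treatment of (ii). ``Proceeding exactly as in the proof of Proposition~\ref{prop.L1-error}'' means integrating by parts $k=N$ times via $\mathcal{J}_\alpha(\lambda t)=c_kt^{-2k}\bigl(\frac{1}{\lambda}\frac{d}{d\lambda}\bigr)^k\mathcal{J}_{\alpha-k}(\lambda t)$, and that is precisely what fails here: each integration by parts costs a factor $t^{-2}$, so the resulting absolute-value bound on the relevant piece of $k_0'$ is $\lesssim\psi(t)\,t^{2\alpha-2k}$, and $\int_{I_j^+}t^{2\alpha-2k}\,dt$ is uniformly bounded as $2^j\to 0$ only for $k\leq\alpha+\frac12$, whereas $N\geq\alpha+\frac32$. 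In Proposition~\ref{prop.L1-error} one only needs integrability of a fixed function near $0$, so $k=N$ is harmless; here one needs uniformity over all small dyadic intervals, and the paper explicitly warns that it is \emph{not} desirable to take $k$ large: for the term where $\frac{d}{dt}$ falls on the Bessel factor it takes $k=0$ (bounding $\mathcal{J}_\alpha(\lambda t)$ by a constant for the other term) and reduces matters to $\int_{I_j^+}\psi(t)t^{2\alpha}\,dt\leq\int_{I_1^+}\psi(t)t^{2\alpha}\,dt$, which is where $\alpha>\frac12$ enters. So the cap on integrations by parts is imposed by the powers of $t$ accumulating against the shrinking dyadic intervals, not --- as you suggest --- primarily by the Bessel order $\alpha-k$ drifting negative or by exhausting the H\"ormander derivatives of $m$; you correctly sense that the bookkeeping is delicate, but your plan imports the $k=N$ strategy that the paper's own proof is at pains to avoid, and as written it would produce a bound of the order $\int_{I_j^+}t^{-2}\,dt\sim 2^{-j}$, which is not uniform in $j$. (The integer-$\alpha$ logarithmic issue you flag is real but secondary; the convergence of the $\lambda$-integral for small $k$ is handled, as throughout this section, by the standing tacit assumption that $m$ is rapidly decreasing, with passage to general $m$ by approximate units.)
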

The statement regarding $k_0$ differs slightly from the analogue in
\cite[Lemma~5.3]{Stanton-Tomas}; since the $\alpha,\beta$ might not be integers, the proof given in \cite{Stanton-Tomas} does not carry over.

\begin{proof}
Let $\psi$ be an even, smooth function on $\R$ such that $0\leq\psi\leq 1$,
$\psi(t)\equiv 1$ when $\vert t\vert\leq R_0^{1/2}$, and $\psi(t)\equiv 0$ when
$\vert t\vert\geq R_0$. Additionally, let $\Phi$ be an even, smooth function on
$\R$ such that $0\leq\Phi\leq 1$, $\Phi(\lambda)\equiv 1$ when
$\vert\lambda\vert>2$ and $\Phi(\lambda)\equiv 0$ when $\vert\lambda\vert<1$.
Then
\[
k_1(t)=\psi(t)\int_0^\infty\Phi(\lambda)\varphi_\lambda(t)m(\lambda)\vert\cfct(\lambda)\vert^{-2}\,d\lambda
+\psi(t)\int_0^\infty(1-\Phi(\lambda))\varphi_\lambda(t)m(\lambda)\vert\cfct(\lambda)\vert^{-2}\,d\lambda,
\]
where the second integral is bounded by
\[\psi(t)\int_0^2\vert\varphi_\lambda(t)\vert\vert
m(\lambda)\vert\vert\cfct(\lambda)\vert^{-2}\,d\lambda
\lesssim\psi(t)\int_0^2\vert
m(\lambda)\vert\vert\cfct(\lambda)\vert^{-2}\,d\lambda\lesssim\psi(t),\] which
is in $L^1(d\mu)$.
\medskip

In order to construct the  functions $k_0$ and $\varepsilon_0$, we first
observe that $\Phi m$ satisfies the hypotheses in Proposition
\ref{prop.L1-error}. Indeed, $\Phi m$ is smooth and even, and the derivatives
all vanish in $0$. As for estimating the derivatives, the desired bound is
trivially true whenever $\vert\lambda\vert<1$, since $\Phi$ and all its
derivatives vanish identically. The estimate for $\vert\lambda\vert
>2$ is just the bound for $m$, since $\Phi'\equiv 0$. In the region
$\{1\leq\vert\lambda\vert\leq 2\}$ we bound the derivative of $\Phi m$ by its maximum. Now take
\[k_0(t) =
c_0\psi(t)\Delta(t)\frac{t^{\alpha+\frac{1}{2}}}{\Delta'(t)}\int_0^\infty\Phi(\lambda)m(\lambda)\mathcal{J}_\alpha(\lambda
t)\vert\cfct(\lambda)\vert^{-2}\,d\lambda\text{ and } \varepsilon_0(t)=\psi(t)\Delta(t)e_0(t),
\]
with $e_0\in L^1(d\mu)$ as in Proposition \ref{prop.L1-error}. It follows
that $\|\varepsilon_0\|_{L^1}\leq \|e_0\|_{L^1(d\mu)}$.
\medskip

We first address the estimates for $I_j^+$. Since $\psi$ and $\psi'$ are
supported in $[-2,2]$, we may assume that $j\leq 1$ in what follows. Moreover,
$\frac{d}{dt}\mathcal{J}_\alpha(\lambda
t)=\frac{\lambda}{t}\frac{d}{d\lambda}\mathcal{J}_{\alpha}(\lambda t)$, so
\begin{multline*}
\frac{dk_0}{dt} =
c_0\frac{d}{dt}(\psi(t)\Delta'(t)t^{\alpha+\frac{1}{2}})\int_0^\infty\Phi(\lambda)m(\lambda)\mathcal{J}_\alpha(\lambda
t)\vert\cfct(\lambda)\vert^{-2}\,d\lambda\\
+c_0\psi(t)\Delta'(t)t^{\alpha-\frac{1}{2}}\int_0^\infty\Phi(\lambda)\lambda
m(\lambda)\Bigl(\frac{d}{d\lambda}\mathcal{J}_\alpha(\lambda
t)\Bigr)\vert\cfct(\lambda)\vert^{-2}\,d\lambda.
\end{multline*}
In the first integral we estimate $\mathcal{J}_\alpha(\lambda t)$ by a constant
independent of $t$ (note that $\lambda t$ is now a real number, so that the
usual estimates for $\mathcal{J}_\alpha$ apply). The first half is therefore
bounded on $I_j^+$, with a bound independent of $j$.

As for the second half, we first recall that $\alpha>\frac{1}{2}$ by
assumption, so the factor in front of the integral is trivially bounded on all
$I_j$, with a bound independent of $j$. To estimate the second integral we could once more try and
integrate by parts, noting that, by assumption, $D^am(0)=0$ for
$a=0,1,\ldots N$ ($N$ being the least integer greater than or equal to $\alpha+\frac{3}{2}$) and $m$ is rapidly decreasing. Since
$\vert\frac{d^k}{d\lambda^k}\vert\cfct(\lambda)\vert^{-2}\vert\lesssim(1+\vert\lambda\vert)^{2\alpha+1-k}$ by Lemma \ref{lemma.precise-c},  and $\mathcal{J}_\alpha(\lambda t)=c_kt^{-2k}\bigl(\frac{1}{\lambda}\frac{d}{d\lambda}\bigr)^k\mathcal{J}_{\alpha-k}(\lambda t)$ for a suitable constant $c_k$, we would arrive at an estimate of
the form
\[\begin{split}
&\biggl|\int_{I_j^+}
\psi(t)\Delta'(t)t^{\alpha-\frac{1}{2}}\int_0^\infty\Phi(\lambda)\lambda
m(\lambda)\Bigl(\frac{d}{d\lambda}\mathcal{J}_\alpha(\lambda
t)\Bigr)\vert\cfct(\lambda)\vert^{-2}\,d\lambda\,dt\biggr|\\
\lesssim & \biggl| \int_{I_j^+}\psi(t)\Delta'(t)t^{\alpha-2k-\frac{1}{2}}\int \frac{d}{d\lambda}\bigl\{\Phi(\lambda)\lambda m(\lambda)\bigr\} \bigl(\frac{1}{\lambda}\frac{d}{d\lambda}\Bigr)^k\mathcal{J}_{\alpha-k}(\lambda t)\,d\lambda\,dt\biggr|\\
\lesssim& \int_{I_j^+}\psi(t)\Delta'(t)t^{\alpha-2k-\frac{1}{2}}\int_0^\infty \vert\mathcal{J}_{\alpha-k}(\lambda t)\vert \Bigl|\Bigl(\frac{1}{\lambda}\frac{d}{d\lambda}\Bigr)^k\frac{d}{d\lambda}(\Phi(\lambda)\lambda m(\lambda)\vert\cfct(\lambda)\vert^{-2} \Bigr|\,d\lambda\,dt\\
\lesssim & \int_{I_j^+}\psi(t)\Delta'(t)t^{\alpha-2k-\frac{1}{2}}\,dt \lesssim     \int_{I_j^+}\psi(t)t^{2\alpha-2k}\,dt
\end{split}\]
since $\vert\Delta'(t)\vert\simeq t^{\alpha+\frac{1}{2}}$ for small $t$. As this quantity is supposed to be uniformly bounded in $j$, it is \emph{not} desirable to take $k$ large. Instead note that $\int_{I_j^+}\psi(t)t^{2\alpha}\,dt\leq \int_{I_1^+}\psi(t)t^{2\alpha}\,dt$ for all $j\in\Z$, $j\leq 1$, yielding the desired uniform bound in $j$ for integrating over intervals $I_j^+$.

The remaining estimates involve integrals over $I_j^-$, and it is here that we need to employ integration by parts $k$ times, with $k$ so large that $2\alpha-2k$ be negative. There is also the very special case of $\alpha$ being an integer, since the required estimates take a different form when we have to estimate $\mathcal{J}_0(\lambda t)$; this case was treated in the proof of \cite[Lemma~5.3]{Stanton-Tomas} but did not reveal how to treat more general parameters. For this reason we had to resort to a different type of proof.

Since  $\sup_{j\in\Z}\int_{I_j^-\cup I_j^+}\vert
k_0'(t)\vert\,dt<\infty$, the Marcinkiewicz multiplier theorem implies that
$k_0$ is an $L^s$-multiplier on $\R$ for all $s\in(1,\infty)$.
\end{proof}

\begin{proof}[Proof of the 'local part` of Theorem
\ref{thm.hormander.multiplier}] It thus follows from Proposition
\ref{prop.Coifman.Weiss} that convolution with $k_1$ is a bounded operator $T_{k_1}$ on
$L^s(d\mu)$ for all $s\in(1,\infty)$.

Moreover, $T_{k_1}$ is of weak type $(1,1)$. This will follow from general results on spaces of homogeneous type, so let us briefly explain this. Define $B(t,r)\subset\R^+$ by
\[B(t,r)=\begin{cases} [t-r,t+r]&\text{if }t>r\\ [0,t+r]&\text{if } t\leq r\end{cases},\]
so that $\mu(B(t,r))=\int 1_{B(t,r)}(s)\Delta(s)\,ds$. Clearly $1+\mu([0,r])\thicksim(\cosh r)^{2\rho}$, $\mu(B(t,r))\thicksim r(\cosh t)^{2\rho}\thicksim re^{2\rho t}$ for $r\leq 1$ and $t>2$, and $\mu(B(t,r))\thicksim rt^{2\alpha+1}$ for $r\leq 1, t\in[2r,2)$, $\mu(t,r)\thicksim\mu([0,r])$ for $r\leq 1, t\leq\min\{2r,2\}$. Additionally, $\mu(B(t,nr))\lesssim\mu(B(t,r)$ for $r\leq 1$, so the ``ball'' $B(1)=[0,1]$ is indeed a space of homogeneous type with respect to the weighted measure $d\mu(t)=\Delta(t)dt$ and replaces the set $U_1$ in \cite[Lemma~18]{Anker-Annals}. In analogy with the important Vitali covering lemma we have the following easy result: Fix a covering $\{B(t_i,r_r)\}$ of a measurable set $E\subset\R^+$ with $r_i\leq 1$ for all $i$. Then there exists a disjoint subcollection $\{B(t_j,r_j)\}$ such that
\[\mu(E)\lesssim\sum_{j=1}^\infty\mu(B(t_j,r_j)).\]

It now suffices to consider functions $f$ on $\R^+$ that are supported in $B(1)$, in which case
$T_{k_1}f$ is indeed a
convolution operator on a space of homogeneous type. Since
$\|\tau_{y}f\|_q\leq\|f\|_q$ for all $q\in[1,\infty)$, $y\geq 0$, by
\cite[Lemma~5.2]{Koornwinder-FJ}, the localized kernel $k_1$ trivially
satisfies the H\"ormander cancellation property (cf. \cite[Formula~(39)]{Anker-Annals}). In fact
\[\|\tau_yk_1-k_1\|_{L^1(d\mu)}=
\int_{\R^+}\vert\tau_yk_1(x)-k_1(x)\vert\,d\mu(x)\leq 2\|k_1\|_{L^1(d\mu)}\leq
C,\] where $C$ is  a fixed constant independent of $y$. It is easily seen that
$T_{k_1}$ is a bounded operator from $L^r(d\mu)$ to $L^s(d\mu)$ whenever
$\frac{1}{r}-\frac{1}{s}=1$, so a standard result due to Coifmann and Weiss, cf. \cite[Section~3]{CoifmanWeiss-book}, yields the weak type $(1,1)$ property of $T_{k_1}$. The same technique was adopted by Anker in the proof of \cite[Corollary~17]{Anker-Annals}.

\end{proof}
\begin{remark}
The above proof of the weak type $(1,1)$ property was inspired by
\cite{Nilsson-pq}, and \cite[Lemma~14]{Anker-Annals}. Obviously our kernel is
much better behaved than those considered by either Anker or Nilsson, so we do
not have to work with a dyadic decomposition of $k_1$ like they did. We can
even get a weak type $(1,q)$ estimate for $T_{k_1}$ with little additional
effort, but this seems more difficult to establish for the global part of the
kernel, $k_2$. It is likely that similar strong $(p,p)$ and weak $(1,1)$
results hold if the kernels are such that their boundary values along
the edges of $\Omega_1$ are in some $L^2$-Sobolev space, as in \cite{Anker-Annals}.
Weaker requirements than those enforced by Anker are considered in
\cite{Johansen-exp2}; these considerations appear separately as we have not yet
been able to prove weak type $(1,1)$ results. These were also not established
in \cite{Giulini-Mauceri-Meda.Crelle}, precisely the results of which we
generalize in \cite{Johansen-exp2}.
\end{remark}

\section{Global Analysis}\label{section.long-range}
We shall presently investigate the behavior of $\varphi_\lambda(t)$ as $t$
tends to infinity and use the result to show that convolution with the 'global`
piece of a kernel $m^\vee$ is a bounded operator on $L^s(d\mu)$ for
$s\in(1,\infty)$. As in the case of symmetric spaces, this investigation
requires sharp bounds on the $\cfct$-function, a close study of the
Harish-Chandra series for $\varphi_\lambda$, and an analogue of the Gangolli
estimates in the Jacobi setting. Recall that
$\varphi_\lambda(t)=\cfct(\lambda)e^{(i\lambda-\rho)t}\phi_\lambda(t)+\cfct(-\lambda)e^{(-i\lambda-\rho)t}\phi_{-\lambda}(t)$, where we now
formally expand $\phi_\lambda(t)$ as a power series (the
``Harish-Chandra series''),
\[\phi_\lambda(t)=\sum_{k=0}^\infty\Gamma_k(\lambda)e^{-2kt}.\] Since $\phi_\lambda$
is a solution to \eqref{eqn.eigeneqn}, the $\Gamma_k(\lambda)$ are
given recursively -- according to \cite[Formula~3.4]{Stanton-Tomas} -- by
$\Gamma_0(\lambda)\equiv 1$,
\begin{multline*}
(k+1)(k+1-i\lambda)\Gamma_{k+1} = (\alpha-\beta)\sum_{j=0}^k(\rho+2j-i\lambda)\Gamma_j \\
+  (\beta+\tfrac{1}{2})\sum_{j=1}^{[\frac{k+1}{2}]}(\rho+2(k+1-2j)-i\lambda)\Gamma_{k+1-2j}(\lambda),
\end{multline*}
where $[\frac{k+1}{2}]$ is the integer part of $\frac{k+1}{2}$. In fact,
$\Gamma_{k+1}=a_k\Gamma_k+\sum_{j=0}^{k-1}b_j^k\Gamma_j$, where (by \cite[Corollary~3.4]{Stanton-Tomas})
\[a_k=1+\frac{\alpha-\beta-1}{k+1}+
\frac{\alpha-\beta-1+\frac{1}{k+1}(\alpha(\alpha-1)-\beta(\beta-1)+1)}{k+1-i\lambda}
\]
and
\[b_j^k=(-1)^{k+j+1}\frac{2\beta+1}{k+1}\Bigl(1+\frac{\rho+2j-1}{k+1-i\lambda}\Bigr).\]

\begin{lemma}[Gangolli estimates]
Let $D$ be either a compact subset of $\C\setminus (-i\N)$ or a set of the form
$D=\{\lambda=\xi+i\eta\in\C\,\vert\,\eta\geq -\varepsilon\vert\xi\vert\}$ for
some $\varepsilon\geq 0$. There exist positive constants $K,d$ such that
\begin{equation}\label{eqn.Gangolli.est}
\vert\Gamma_k(\lambda)\vert\leq K(1+k)^d \text{ for all }k\in\Z_+, \lambda\in
D.
\end{equation}
\end{lemma}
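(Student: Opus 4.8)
The plan is to work throughout with the differenced recursion $\Gamma_{k+1} = a_k\Gamma_k + \sum_{j=0}^{k-1}b_j^k\Gamma_j$ rather than the original full-sum relation, precisely because there the $\lambda$-dependence is confined to the denominators $k+1-i\lambda$; this is what makes a bound possible that is uniform even as $\lambda$ ranges over the unbounded cone. My first step would therefore be a uniform lower bound on those denominators. Writing $\lambda=\xi+i\eta$ one has $|k+1-i\lambda|^2=(k+1+\eta)^2+\xi^2$. On $D=\{\eta\ge-\varepsilon|\xi|\}$ I would split according to whether $|\xi|\le(k+1)/(2\varepsilon)$, in which case $k+1+\eta\ge(k+1)/2$, or $|\xi|>(k+1)/(2\varepsilon)$, in which case the $\xi^2$ term dominates; either way $|k+1-i\lambda|\ge c_\varepsilon(k+1)$ for some $c_\varepsilon>0$. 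On a compact $D\subset\C\setminus(-i\N)$ the same bound holds, since $\Im\lambda$ is bounded and $\lambda$ stays away from the poles $-i(k+1)$. This is exactly where both hypotheses on $D$ are used.

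From this lower bound I would read off, uniformly in $\lambda\in D$, that $|a_k|\le 1+C_1/(k+1)$ and
\[|b_j^k|\le\frac{C_2}{k+1}\Bigl(1+\frac{1+j}{k+1}\Bigr),\qquad 0\le j\le k-1,\]
with constants depending only on $\alpha,\beta$ and $D$ and \emph{not} on $|\lambda|$. The difficulty is that $\sum_{j=0}^{k-1}|b_j^k|$ is only $O(1)$, not $o(1)$, so a naive Gr\"onwall iteration with $M_k:=\max_{0\le j\le k}|\Gamma_j|$ would only give $M_{k+1}\le(1+C)M_k$ and hence useless exponential growth. The resolution is to exploit the alternating sign $b_j^k=(-1)^{k+j+1}\beta_j^k$. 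Setting $g_j=\beta_j^k\Gamma_j$ and summing by parts against the bounded partial sums of $(-1)^j$ yields
\[\Bigl|\sum_{j=0}^{k-1}b_j^k\Gamma_j\Bigr|\le|g_{k-1}|+\sum_{j=0}^{k-2}|g_{j+1}-g_j|.\]
Since $|\beta_j^k|\le 2C_2/(k+1)$ and the step-variation of $\beta_\bullet^k$ in $j$ is $O((k+1)^{-2})$, so that $\sum_j|\beta_{j+1}^k-\beta_j^k|\le C_3/(k+1)$, the right-hand side is bounded by $\frac{C}{k+1}(M_k+V_k)$, where $V_k:=\sum_{j=0}^{k-1}|\Gamma_{j+1}-\Gamma_j|$ denotes the total variation.

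The final step converts this into a closed estimate for $W_k:=1+V_k$. Inserting the bound just obtained together with $|a_k-1|\le C_1/(k+1)$ into $\Gamma_{k+1}-\Gamma_k=(a_k-1)\Gamma_k+\sum_{j<k}b_j^k\Gamma_j$, and using $M_k\le 1+V_k=W_k$, gives $|\Gamma_{k+1}-\Gamma_k|\le\frac{C}{k+1}W_k$, whence
\[W_{k+1}=W_k+|\Gamma_{k+1}-\Gamma_k|\le\Bigl(1+\frac{C}{k+1}\Bigr)W_k.\]
Iterating and using $\prod_{j=1}^{k}(1+C/j)\le\exp\bigl(C\sum_{j=1}^{k}j^{-1}\bigr)\lesssim(1+k)^C$ gives $W_k\le K(1+k)^C$, and since $|\Gamma_k|\le M_k\le W_k$ the assertion follows with $d=C$. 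All constants depend only on $\alpha,\beta$ and on $D$ (through $c_\varepsilon$), never on the individual $\lambda$, so the estimate is uniform on $D$ as required.

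I expect the main obstacle to be exactly the point flagged above: under absolute-value estimates the sum $\sum_j b_j^k\Gamma_j$ contributes at order $O(1)$, and only the cancellation encoded in the factor $(-1)^{k+j+1}$—made usable by summation by parts together with the total-variation bookkeeping that feeds back into $W_k$—brings it down to the $O((k+1)^{-1})$ that closes the Gr\"onwall iteration.
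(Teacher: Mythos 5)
Your proof is correct --- the lower bound $\vert k+1-i\lambda\vert\ge c_D(k+1)$ on both kinds of sets $D$, the resulting coefficient bounds, the Abel summation against the bounded partial sums of $(-1)^j$, and the closing iteration $W_{k+1}\le(1+C/(k+1))W_k$ all check out --- but it takes a genuinely different, and more laborious, route than the paper, which in fact offers no proof at all and simply cites Flensted-Jensen's Lemma~7 in \cite{FJ}. The classical Gangolli-type argument behind that citation works with the \emph{undifferenced} recursion $(k+1)(k+1-i\lambda)\Gamma_{k+1}=\sum_{j=0}^k c_j^k(\lambda)\Gamma_j$ and with the partial sums $S_k=\sum_{j=0}^k\vert\Gamma_j\vert$ rather than with $\max_{j\le k}\vert\Gamma_j\vert$: each numerator is $O(1+j+\vert\lambda\vert)$, while on $D$ one also has $\vert k+1-i\lambda\vert\ge c_D(k+1+\vert\lambda\vert)$ (your two-case split gives the $(k+1)$-half; the $\vert\lambda\vert$-half is immediate from $(k+1+\eta)^2+\xi^2\ge\xi^2\ge\vert\lambda\vert^2/(1+\varepsilon^2)$), so every individual coefficient is $O(1/(k+1))$ uniformly in $j\le k$ and $\lambda\in D$. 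Hence $\vert\Gamma_{k+1}\vert\le\frac{C}{k+1}S_k$, so $S_{k+1}\le(1+\frac{C}{k+1})S_k$, and $\prod_{j=1}^k(1+C/j)\lesssim(1+k)^C$ finishes the proof using absolute values only. Your diagnosis that the alternating sign in $b_j^k$ is indispensable is therefore not a feature of the lemma but an artifact of two choices you made: passing to the differenced recursion, whose leading coefficient $a_k\approx 1$ defeats the partial-sum trick, and measuring size by the max instead of the sum. What your version buys in exchange for the summation-by-parts machinery is a simultaneous bound on the total variation $\sum_j\vert\Gamma_{j+1}-\Gamma_j\vert$, which the classical route does not give directly; the classical route is shorter, which is presumably why the paper delegates the lemma to a reference.
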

\begin{proof}
See \cite[Lemma~7]{FJ}.
\end{proof}

It follows that the expansion for $\phi_\lambda(t)$ converges uniformly on sets
of the form $\{(t,\lambda)\in[c,\infty)\times D\}$, where $c$ is a positive
constant. More precisely, if $\lambda\in D$, and $c>0$ is fixed, we see that
\[\forall t\geq c:\vert\phi_\lambda(t)\vert \leq\sum_{k=0}^\infty K(1+k)^de^{-2kt} \lesssim
\sum_{k=0}^\infty (1+k)^de^{-2ck}\lesssim 1,\] that is, $\phi_\lambda(t)$ is
bounded uniformly in $\lambda\in D$ for $t\geq c>0$. We will take $c=R_0$ in
later applications. Since $\lambda\mapsto\phi_\lambda(t)$ is analytic in a
strip containing the real axis, it follows as in the proof of
\cite[Lemma~7]{Meaney-Prestini} that derivatives of $\phi_\lambda$ in $\lambda$
are bounded independently of $\lambda$ as well.

The asymptotic behavior of $\varphi_\lambda(t)$ as $t$ increases can now be
investigated. The result is formally the same as the analogues in \cite{Stanton-Tomas} and
\cite{Schindler}, and the proof will even work for complex parameters $\alpha,\beta$.

\begin{theorem}\label{thm.bigt}
\begin{enumerate}
\romannum \item For every $M\geq 0$, $0\leq m\leq M$ and $\lambda\in\C$ with $\Im\lambda\geq 0$, there exist polynomials $f_{l_m}$ in $\lambda$ of degree $m$ such that
\[\Gamma_k(\lambda)=\sum_{m=0}^M\gamma_m^k+E_{M+1}^k,\]
where $\gamma_m^k$ is a sum of terms $1/f_{l_m}$, and where
\[\gamma_m^k(\lambda)\vert\leq A\frac{\vert\rho\vert^me^{2k}}{\vert\Re\lambda\vert^m},
\quad \vert D_{\Re\lambda}^a\gamma_m^k\vert\leq 2^aA\frac{\vert\rho\vert^m e^{2k}}{\vert\Re\lambda\vert^{m+a}}, \text{ and }
\vert E_{M+1}^k\vert\leq A\frac{\vert\rho\vert^{M+1}e^{2k}}{\vert\Re\lambda\vert^{M+1}};
\]
the constant $A$ is independent of $M$ and $\lambda$.

\item Let $\Lambda_m(\lambda,t)=\sum_{j=0}^\infty\gamma_j^{m+j}(\lambda)e^{-2jt}$. There exists a function $\mathcal{E}_{M+1}$ such that, for every $M\geq 0$ and $t\geq R_0$, it holds for $\lambda\in\C$ with $\Im\lambda\geq 0$ that
\[\phi_\lambda(t)=\sum_{m=0}^\infty\Lambda_m(\lambda,t)e^{-2mt}=\sum_{m=0}^M\Lambda_m(\lambda,t)e^{-2mt}+e^{-2(M+1)t}\mathcal{E}_{M+1}(\lambda,t),\]
where
\[\vert D_\lambda^a D_t^b\Lambda_m\vert\leq 2^{a+b}A\frac{\vert\rho\vert^me^{2m}}{\vert\Re\lambda\vert^{m+a}}G_b(t)\text{ and } \vert D_t^b\mathcal{E}_{M+1}\vert\leq 2^bA\frac{e^{2(M+1)}\vert\rho\vert^{M+1}}{\vert\Re\lambda\vert^{M+1}}G_b(t),\]
with $G_k(t):=\sum_{j=0}^\infty j^ke^{2k(1-t)}$.
\end{enumerate}
\end{theorem}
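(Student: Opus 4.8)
The plan is to read off both parts from the linear recursion for the Harish--Chandra coefficients $\Gamma_k$ by tracking the powers of $(k+1-i\lambda)^{-1}$. Writing $a_k=A_k+\frac{B_k}{k+1-i\lambda}$ and $b_j^k=C_j^k+\frac{D_j^k}{k+1-i\lambda}$ with $A_k,B_k,C_j^k,D_j^k$ independent of $\lambda$, I would substitute the formal expansion $\Gamma_k=\sum_m\gamma_m^k$ into $\Gamma_{k+1}=a_k\Gamma_k+\sum_{j=0}^{k-1}b_j^k\Gamma_j$ and match the contributions carrying exactly $m$ factors $(k'+1-i\lambda)^{-1}$. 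This produces the order-$m$ recursion
\[
\gamma_m^{k+1}=A_k\gamma_m^k+\sum_{j=0}^{k-1}C_j^k\gamma_m^j+\frac{1}{k+1-i\lambda}\Bigl(B_k\gamma_{m-1}^k+\sum_{j=0}^{k-1}D_j^k\gamma_{m-1}^j\Bigr),
\]
with the conventions $\gamma_{-1}^\bullet=0$ and $\gamma_0^0=1$. Solved as a nested recursion (outer in $m$, inner in $k$), this defines the $\gamma_m^k$ unambiguously. Since the order can only rise when a factor $(k+1-i\lambda)^{-1}$ is introduced, one gets $\gamma_m^k=0$ for $m>k$; hence $\Gamma_k=\sum_{m=0}^k\gamma_m^k$ is a \emph{finite} sum, and $\gamma_m^k$ is manifestly a finite sum of constants divided by products of $m$ linear factors $k_i+1-i\lambda$, i.e.\ by polynomials $f_{l_m}$ of degree $m$, which is the structural claim.

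For the estimates in (i) I would argue by double induction on $m$ and $k$, using $|k'+1-i\lambda|\ge|\Re\lambda|$ for $\Im\lambda\ge0$ and that the recursion constants are controlled by $\alpha,\beta$ (via $\alpha(\alpha-1)-\beta(\beta-1)=(\alpha-\beta)(\rho-2)$, so that $B_k$ is genuinely $O(|\rho|)$, while $D_j^k$ carries the linear-in-$j$ size $\rho+2j-1$ divided by $k+1$). The two order-preserving sums reproduce the target bound $A|\rho|^m e^{2k}/|\Re\lambda|^m$ at level $k+1$ from $\sum_{j<k}e^{2j}\lesssim e^{2k}$. The delicate contribution is $\frac{1}{k+1-i\lambda}\sum_{j<k}D_j^k\gamma_{m-1}^j$: here the numerator grows linearly in $j$, and the whole point is that this growth is absorbed into the factor $e^{2k}$ rather than into an extra power of $|\rho|$ or a factor of $k$, since $\frac{1}{k+1}\sum_{j<k}(\rho+2j)e^{2j}\lesssim\frac{1}{k+1}\,k\,e^{2k}\lesssim e^{2k}$. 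Keeping the constant $A$ uniform in $m$ and $k$ through this absorption is, I expect, the main obstacle of the whole argument. The error bound $|E_{M+1}^k|\lesssim A|\rho|^{M+1}e^{2k}/|\Re\lambda|^{M+1}$ then follows by summing the tail $E_{M+1}^k=\sum_{m>M}\gamma_m^k$ as a geometric series, convergent once $|\Re\lambda|$ exceeds a fixed multiple of $|\rho|$. For the $\Re\lambda$-derivatives I would either differentiate the order-$m$ recursion, each derivative replacing a factor $(k'+1-i\lambda)^{-1}$ by one of size $|\Re\lambda|^{-1}$, or invoke Cauchy's estimate on a disc of radius comparable to $|\Re\lambda|$ inside $\{\Im\lambda\ge0\}$, where $\gamma_m^k$ is holomorphic; either route gives the asserted $|\Re\lambda|^{-(m+a)}$ decay with the benign constant $2^a$.

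Part (ii) is then a reorganization of the double series, legitimate because for $t\ge R_0>1$ the Gangolli estimates and the per-term bounds of (i) guarantee absolute convergence. Interchanging summation in $\phi_\lambda=\sum_k e^{-2kt}\sum_{m=0}^k\gamma_m^k$ and writing $k=m+j$ gives $\phi_\lambda=\sum_m e^{-2mt}\sum_{j\ge0}\gamma_m^{m+j}e^{-2jt}$, so I would set $\Lambda_m(\lambda,t)=\sum_{j\ge0}\gamma_m^{m+j}(\lambda)e^{-2jt}$; this is the grouping by fixed order $m$, and I read the displayed definition in this way, since the variant $\sum_j\gamma_j^{m+j}e^{-2jt}$ groups differently and would not exhibit the $|\Re\lambda|^{-m}$ decay claimed below. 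Differentiating term by term, $D_t^b$ produces $(-2j)^b$ and $D_\lambda^a$ acts on $\gamma_m^{m+j}$, so the bounds from (i) give
\[
|D_\lambda^a D_t^b\Lambda_m|\le 2^{a+b}A\frac{|\rho|^m e^{2m}}{|\Re\lambda|^{m+a}}\sum_{j\ge0}j^b e^{2j(1-t)}=2^{a+b}A\frac{|\rho|^m e^{2m}}{|\Re\lambda|^{m+a}}G_b(t),
\]
with $G_b(t)=\sum_{j\ge0}j^b e^{2j(1-t)}$ (convergent for $t>1$). Finally, comparing $\phi_\lambda=\sum_{m=0}^M\Lambda_m e^{-2mt}+e^{-2(M+1)t}\mathcal E_{M+1}$ with the finite-order decomposition $\Gamma_k=\sum_{m=0}^M\gamma_m^k+E_{M+1}^k$ identifies $\mathcal E_{M+1}(\lambda,t)=\sum_{j\ge0}E_{M+1}^{M+1+j}(\lambda)e^{-2jt}$, and the same term-by-term differentiation together with the error bound of (i) yields $|D_t^b\mathcal E_{M+1}|\le 2^bA|\rho|^{M+1}e^{2(M+1)}|\Re\lambda|^{-(M+1)}G_b(t)$, as required.
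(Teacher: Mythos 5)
Your route is the one the paper's own proof points to --- the paper defers entirely to \cite[Section~3]{Stanton-Tomas} for part (i) and to \cite[Lemma~6]{Meaney-Prestini} for part (ii) --- and your reconstruction of it is structurally faithful: splitting $a_k$ and $b_j^k$ into $\lambda$-free parts and $(k+1-i\lambda)^{-1}$ parts, grading $\Gamma_k$ by the number of such factors (so that $\gamma_m^k=0$ for $m>k$ and each $\gamma_m^k$ is a finite sum of reciprocals of degree-$m$ products of linear factors), and regrouping the double series along $k=m+j$. Your reading of $\Lambda_m$ as $\sum_{j\ge 0}\gamma_m^{m+j}(\lambda)e^{-2jt}$ and of $G_k(t)$ as $\sum_{j\ge 0}j^k e^{2j(1-t)}$ correctly repairs two typos in the statement, and the identification $\mathcal{E}_{M+1}=\sum_{j\ge 0}E_{M+1}^{M+1+j}e^{-2jt}$ together with term-by-term differentiation is right.

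One step fails as written, however. You derive the bound on $E_{M+1}^k$ by summing the tail $\sum_{m>M}\gamma_m^k$ as a geometric series, which you yourself restrict to the regime where $\vert\Re\lambda\vert$ exceeds a fixed multiple of $\vert\rho\vert$; the theorem asserts the bound for \emph{every} $\lambda$ with $\Im\lambda\ge 0$, and for $\vert\Re\lambda\vert\le\vert\rho\vert$ the termwise sum is dominated by the $m=k$ term, of size $A(\vert\rho\vert/\vert\Re\lambda\vert)^k e^{2k}$, which is far larger than the asserted $A(\vert\rho\vert/\vert\Re\lambda\vert)^{M+1}e^{2k}$. The remedy is to induct on $E_{M+1}^k$ directly: subtracting the order-$m$ recursions from the full one gives
\[
E_{M+1}^{k+1}=a_kE_{M+1}^k+\sum_{j=0}^{k-1}b_j^kE_{M+1}^j+\frac{1}{k+1-i\lambda}\Bigl(B_k\gamma_M^k+\sum_{j=0}^{k-1}D_j^k\gamma_M^j\Bigr),
\]
whose forcing term already has the target size $\vert\rho\vert^{M+1}e^{2k}\vert\Re\lambda\vert^{-(M+1)}$ and whose homogeneous coefficients are bounded via $\vert k+1-i\lambda\vert\ge k+1$, uniformly on the closed upper half-plane, so the induction closes for all such $\lambda$. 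The same observation ($\vert a_k\vert\le 1+C/(k+1)$, $\vert b_j^k\vert\le C/(k+1)$, and $\sum_{j<k}e^{2j}\le e^{2k}/(e^2-1)$, a genuine contraction) is also what lets one carry a single constant $A$ through the double induction for the $\gamma_m^k$ --- the point you flag as ``the main obstacle'' but do not actually close. With these two repairs your argument is the one the paper intends.
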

\begin{proof}
The algebraic properties of the Harish-Chandra series are investigated in \cite[Section~3]{Stanton-Tomas}, along with the estimates in part (i) of the theorem, and it is an arduous (yet elementary) matter to redo the proofs for complex parameters $\alpha,\beta$ instead. The improved statement in (ii) via the presence of the exponential factor in $e^{-2(M+1)t}\mathcal{E}_{M+1}(\lambda,t)$, was established in \cite[Lemma~6]{Meaney-Prestini}, the proof of which may trivially be repeated.
\end{proof}
\begin{lemma}\label{lemma.constrct.Kepsilon}
If $m$ is an even, analytic function in $\Omega_1$ satisfying the estimate
$\vert D^a_xm(x+iy)\vert\leq c_{a,y}(1+\vert x\vert)^{-a}$  for $0\leq
a\leq N$, $x+iy\in\Omega_1,$, and $\varepsilon\in (0,1)$ for some constant $c_{a_y}$ not depending on $m$, there exist a constant $c_\varepsilon$ and a nonnegative function $K_\varepsilon\in
L^2(\R^+)$ such that
\begin{equation}\label{eqn.constrct.Kepsilon}
\vert (1-\psi(t))m^\vee(t)\vert\leq c_\varepsilon e^{-(1+\varepsilon)(\Re\rho)t}(1+K_\varepsilon(t))\text{ for all }t\geq 0.
\end{equation}
\end{lemma}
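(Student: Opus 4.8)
The plan is to bound the global part $(1-\psi(t))m^\vee(t)$ using the asymptotic decomposition of $\varphi_\lambda(t)$ valid for $t\geq R_0$. Recall that on the support of $1-\psi$ we have $t\geq R_0^{1/2}$, and for such $t$ the Harish-Chandra series converges, so we may substitute the decomposition
\[
\varphi_\lambda(t)=\cfct(\lambda)e^{(i\lambda-\rho)t}\phi_\lambda(t)+\cfct(-\lambda)e^{(-i\lambda-\rho)t}\phi_{-\lambda}(t)
\]
into the inversion formula $m^\vee(t)=\int_0^\infty m(\lambda)\varphi_\lambda(t)\,d\nu(\lambda)$. Since $m$ is even and $\vert\cfct(\lambda)\vert^{-2}=(\cfct(\lambda)\cfct(-\lambda))^{-1}$, the two terms combine, and after cancelling one factor of $\cfct$ against the Plancherel density we arrive at an expression of the form
\[
(1-\psi(t))m^\vee(t)=c\,(1-\psi(t))e^{-(\Re\rho)t}\int_0^\infty m(\lambda)e^{i\lambda t}\phi_\lambda(t)\,\frac{d\lambda}{\cfct(-\lambda)}.
\]
The overall prefactor $e^{-(\Re\rho)t}$ is already present; the task is to extract an additional decay of $e^{-\varepsilon(\Re\rho)t}$ and to control the remaining integral by $1+K_\varepsilon(t)$ with $K_\varepsilon\in L^2(\R^+)$.

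First I would insert Theorem \ref{thm.bigt}(ii), truncating the Harish-Chandra series at a level $M$ chosen large enough (in terms of $\alpha$ and $N$) that the tail $e^{-2(M+1)t}\mathcal{E}_{M+1}$ contributes an $L^2$-in-$t$, exponentially small term. The main term becomes a finite sum over $m$ of integrals
\[
I_m(t)=e^{-2mt}\int_0^\infty m(\lambda)e^{i\lambda t}\Lambda_m(\lambda,t)\,\frac{d\lambda}{\cfct(-\lambda)}.
\]
For each fixed $m$ I would integrate by parts $a$ times in $\lambda$, moving $e^{i\lambda t}$ past the amplitude $m(\lambda)\Lambda_m(\lambda,t)/\cfct(-\lambda)$; each integration gains a factor $t^{-1}$ while differentiating the amplitude. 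The $\cfct$-factor is controlled by \eqref{eqn.c-fct.est}, which gives $\vert\cfct(-\lambda)\vert^{-1}\lesssim(1+\vert\lambda\vert)^{\alpha+\frac12}$, the H\"ormander hypothesis supplies $\vert D_x^a m\vert\lesssim(1+\vert x\vert)^{-a}$, and Theorem \ref{thm.bigt}(ii) bounds the $\lambda$-derivatives of $\Lambda_m$. Choosing the number of integrations by parts equal to $N$ (the least integer $\geq\alpha+\frac32$) makes the resulting $\lambda$-integrand integrable, so that $I_m(t)$ is bounded by $e^{-2mt}t^{-N}\lesssim e^{-2mt}$ for $t\geq R_0^{1/2}$, uniformly in the relevant range.

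The delicate point is producing the improved exponent $(1+\varepsilon)$ rather than just $1$, and this is where I expect the main obstacle to lie. The naive estimate only yields the factor $e^{-(\Re\rho)t}$ coming from the explicit exponential. To gain the extra $e^{-\varepsilon(\Re\rho)t}$, I would exploit that after integration by parts the amplitude is genuinely integrable with a little room to spare, so one can afford to shift the contour of integration slightly into the upper half plane $\Im\lambda=\delta>0$, using that $m$ extends holomorphically to $\Omega_1$ and that the Gangolli estimates of Theorem \ref{thm.bigt} remain valid on the set $D_{\varepsilon,\gamma}$. The factor $e^{i\lambda t}$ then contributes $e^{-\delta t}$, and choosing $\delta$ proportional to $\varepsilon\Re\rho$ supplies the missing decay; the holomorphy in $\Omega_1$ and the pole structure of $\cfct(-\lambda)^{-1}$ (which, by the remark following the $\cfct$-function definition, has no poles for $\Im\lambda\geq0$) guarantee that the contour shift picks up no residues. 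The residual $\lambda$-integral along the shifted contour, together with the finitely many terms $I_m$ and the controlled tail, is then assembled into the function $K_\varepsilon$; its square-integrability in $t$ follows because each piece decays at least like $t^{-N}$ with $N>\tfrac32$, hence lies in $L^2(\R^+,dt)$ after the exponential prefactor is stripped off. Verifying that all constants can be taken independent of $m$ (as the hypothesis demands, since $c_{a,y}$ does not depend on the multiplier) is the bookkeeping that makes the argument longer than its symmetric-space prototype.
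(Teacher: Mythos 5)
Your overall route is the paper's: write $m^\vee$ via the inversion formula and the $\cfct$-function decomposition of $\varphi_\lambda$, expand $\phi_\lambda$ in the Harish-Chandra series for $t\geq R_0^{1/2}$, gain the factor $e^{-(1+\varepsilon)(\Re\rho)t}$ by shifting the contour to $\Im\lambda=\varepsilon\Re\rho$ (legitimate since $\cfct(-\lambda)^{-1}$ has no poles in the closed upper half plane and $m$ is holomorphic in $\Omega_1$), and control the remaining $\lambda$-integrals using the Gangolli estimates and Theorem \ref{thm.bigt}. Up to the reordering of the contour shift and the integration by parts, this is exactly what the paper does.

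There is, however, one genuine gap, and it sits exactly where the lemma's conclusion is shaped. You claim that $N$ integrations by parts ``makes the resulting $\lambda$-integrand integrable, so that $I_m(t)$ is bounded by $e^{-2mt}t^{-N}$.'' After $N$ integrations by parts the amplitude for the leading term ($j=0$, $\gamma_0^k$ bounded) has $N$-th derivative of size $(1+\vert\lambda\vert)^{\alpha+\frac{1}{2}-N}$, coming from $\vert\cfct(-\lambda)\vert^{-1}\lesssim(1+\vert\lambda\vert)^{\alpha+\frac{1}{2}}$. Since $N$ is the \emph{least} integer $\geq\alpha+\frac{3}{2}$, in the borderline case $\alpha+\frac{3}{2}\in\Z$ (e.g.\ $\alpha$ a half-odd-integer, which occurs for Damek--Ricci spaces with $m_\v+m_\z$ even and for generic Heckman--Opdam multiplicities) this exponent equals $-1$ and the integrand is \emph{not} in $L^1(d\lambda)$; the pointwise bound $t^{-N}$ is then unavailable. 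This is precisely why the statement asserts only $K_\varepsilon\in L^2(\R^+)$ rather than boundedness: the paper circumvents the $L^1$ failure by applying the Euclidean Plancherel theorem, writing $\int t^{2N}\vert\int e^{i\lambda t}f_j^k(\lambda)\,d\lambda\vert^2\,dt=c\int\vert\frac{d^N}{d\lambda^N}f_j^k\vert^2\,d\lambda$, and the \emph{square} of the $N$-th derivative decays like $(1+\vert\lambda\vert)^{2\alpha+1-2N}\leq(1+\vert\lambda\vert)^{-2}$, which is always integrable. If your pointwise bound held uniformly one would not need $K_\varepsilon$ at all. A secondary, smaller omission: the bounds on $D^a\gamma_j^k$ and $D^a\Lambda_m$ in Theorem \ref{thm.bigt} degenerate as $\Re\lambda\to 0$, so one must insert a low-frequency cutoff $\Phi$ and treat $\vert\lambda\vert\lesssim 1$ separately via the raw Gangolli estimates \eqref{eqn.Gangolli.est}; your write-up glosses over this split.
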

The proof is technically involved and fairly long, but involves no novel new
insights compared to the proof of its symmetric space-analogue
\cite[Equation~(4.7)]{Stanton-Tomas}; the main idea is to use Theorem
\ref{thm.bigt} and the Gangolli estimates \eqref{eqn.Gangolli.est}. Since the
usual issues with non-integer parameters $\alpha,\beta$ and complex $\lambda$ persist, we have decided to include the
proof with a few more details. As the proof even works for complex $\alpha,\beta$, we have decided to write the proof as such, although we shall merely need the statement for real parameters.

\begin{proof}
Observe that 
\[m^\vee(t)=\int_0^\infty
m(\lambda)\varphi_\lambda(t)(\cfct(\lambda)\cfct(-\lambda))^{-1}\,d\lambda = \int_\R
m(\lambda)\cfct(-\lambda)^{-1}e^{(i\lambda-\rho)t}\phi_\lambda(t)\,dt\] by the
inversion formula. The formal series
$\phi_\lambda(t)=\sum_k\Gamma_k(\lambda)e^{-2kt}$ converges
uniformly for $t\geq R_0^{1/2}>1$, so
\[\vert(1-\psi(t))m^\vee(t)\vert\leq(1-\psi(t))\sum_{k=0}^\infty\biggl|\int_\R
m(\lambda)\cfct(-\lambda)^{-1}\Gamma_k(\lambda)e^{(i\lambda-\rho)t}\,d\lambda\biggr|e^{-2kt}.\]
Since $\lambda\mapsto\cfct(-\lambda)^{-1}$ is holomorphic when $\Im\lambda\geq 0$, the integrand \[h:\lambda\mapsto
m(\lambda)\cfct(-\lambda)^{-1}\Gamma_k(\lambda)e^{(i\lambda-\rho)t}\] is  holomorphic
there as well, and we may employ the standard technique of changing the contour of integration, $\lambda+i0\to
\lambda+i\varepsilon\rho$, for some fixed $\varepsilon\in(0,1)$. This technique permeates much of harmonic analysis, of course. We refer the reader
to the proof of \cite[Proposition~5.1]{Helgason-PW} for details.

\begin{figure}[h]
\centering
\includegraphics{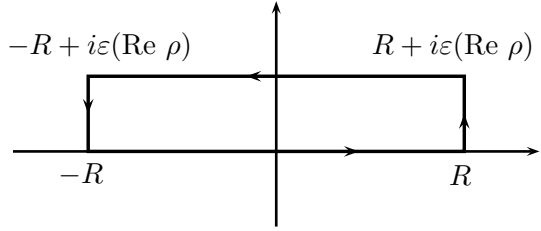}
\vspace{-5mm}
\caption{Change of contour of integration.}
\label{fig.contour}
\end{figure}

Let $\gamma_2$ denote the vertical line segment from $R$ to $R+i\varepsilon(Re\rho)$ in Figure \ref{fig.contour}, parameterized, say, by $\gamma_2(s)=R+i\varepsilon(Re\rho)s$, $0\leq s\leq 1$. Then
\[\int_{\gamma_2}h(z)\,dz=i\varepsilon(\Re\rho)e^{(iR-\rho)t}\int_0^1 m(R+i\varepsilon(\Re\rho)s)\cfct(-R-i\varepsilon(\Re\rho)s)^{-1}\Gamma_k(R+i\varepsilon(\Re\rho)s)e^{-\varepsilon(\Re\rho)st}\,ds.\]
Since $m$ decays rapidly as $R\to\infty$, by standing assumption, we infer from the estimates \eqref{eqn.c-fct.est} and \eqref{eqn.Gangolli.est} that $\vert\int_{\gamma_2}h\vert\to 0$ as $R\to\infty$. Analogously with the leftmost vertical line segment. By the Cauchy Theorem, it thus follows that
\begin{multline*}
\vert(1-\psi(t))m^\vee(t)\vert  \\
\lesssim (1-\psi(t))e^{-(1+\varepsilon)(\Re\rho)t}
\sum_{k=0}^\infty\biggl|\int_\R e^{i\lambda
t}\underbrace{m(\lambda+i\varepsilon\rho)\cfct(-\lambda-i\varepsilon(\Re\rho))^{-1}}_{:=q(\lambda,\varepsilon)}
\Gamma_k(\lambda+i\varepsilon(\Re\rho))\,d\lambda\biggr|e^{-2kt},\end{multline*}
whence it remains to establish the estimate
\begin{equation}\label{eqn.ce.plus.Ke}
\sum_{k=0}^\infty\biggl|\int_\R e^{i\lambda
t}q(\lambda,\varepsilon)\Gamma_k(\lambda+i\varepsilon(\Re\rho))\,d\lambda\biggr|e^{-2kt}\leq
c_\varepsilon+K_\varepsilon(t).
\end{equation}

To this end we fix a smooth even function $\Phi$ on $\R$ such that
$0\leq\Phi\leq 1$, $\Phi(\lambda)\equiv 1$ for $\vert\lambda\vert >2$ and
$\Phi(\lambda)\equiv 0$ for $\vert \lambda\vert <1$ and split the integral in \eqref{eqn.ce.plus.Ke} as

\[\int_\R e^{i\lambda t}(1-\Phi(\lambda))q(\lambda,\varepsilon)\Gamma_k(\lambda+i\varepsilon(\Re\rho))\,d\lambda +
 \int_\R e^{i\lambda t}\Phi(\lambda)q(\lambda,\varepsilon)\Gamma_k(\lambda+i\varepsilon(\Re\rho))\,d\lambda=:I_k+I\!I_k\]
The first integral is bounded, according the Gangolli estimates
\eqref{eqn.Gangolli.est}, by
\[\vert I_k\vert \leq
2\sup_{\lambda\in[-2,2]}\vert
q(\lambda,\varepsilon)\Gamma_k(\lambda+i\varepsilon(\Re\rho))\vert\leq c_\varepsilon
(1+k)^d\] for some constant $d$. But then
\[\sum_{k=0}^\infty\biggl|\int_\R e^{i\lambda t}(1-\Phi(\lambda))q(\varepsilon,\lambda)\Gamma_k(\lambda+i\varepsilon(\Re\rho))\,d\lambda\biggr|e^{-2kt}\leq
c_\varepsilon\sum_{k=0}^\infty (1+k)^d e^{-2kt}<c_\varepsilon.\]
\medskip

Estimating $I\!I_k$ is slightly more difficult, in part because we need good estimates for
$\cfct(-\lambda-i\varepsilon(\Re\rho))^{-1}$, which involves quotients of
Gamma functions with complex arguments. The estimates needed for controlling the $\Gamma_k$ are furnished
by Theorem \ref{thm.bigt}, however: Taking
$M=N$, $N$ being the least integer greater than or equal to $\Re\alpha+\frac{3}{2}$, the integral $I\!I_k$ is bounded according to
\begin{multline*}\vert II_k\vert
\leq  \sum_{j=0}^N\biggl|\int_\R\Phi(\lambda)e^{i\lambda
t}\gamma_j^k(\lambda+i\varepsilon(\Re\rho))q(\lambda,\varepsilon)\,d\lambda\biggr|\\
+ \sup_{\lambda}\vert m(\lambda+i\varepsilon\rho)\vert
\int_\R\Phi(\lambda)\vert E_{N+1}^k(\lambda)\vert
\vert\cfct(-\lambda-i\varepsilon(\Re\rho))\vert^{-1}\,d\lambda.\end{multline*}
We may safely assume that $\vert\lambda\vert>1$, by construction of $\Phi$, and
in this case \eqref{eqn.c-fct.est} implies that $\vert\cfct(-\lambda-i\varepsilon(\Re\rho))\vert^{-1}\leq
c_\varepsilon\vert\lambda\vert^{\Re\alpha+\frac{1}{2}}$. It follows from the
$E^k_{N+1}$-estimates in Theorem \ref{thm.bigt} that
\begin{multline*}
\sup_{\lambda}\vert m(\lambda+i\varepsilon(\Re\rho))\vert \int_\R\Phi(\lambda)\vert
E_{N+1}^k(\lambda)\vert
\vert\cfct(-\lambda-i\varepsilon(\Re\rho))\vert^{-1}\,d\lambda\\
\leq  c_\varepsilon
A\vert\rho\vert^{N+1}e^{2k}\int^\infty_1\vert\lambda\vert^{-(N+1)}\vert\lambda\vert^{\Re\alpha+\frac{1}{2}}\,d\lambda\leq c_\varepsilon e^{k},
\end{multline*}
where it was used that $N>\Re\alpha+1$. The total contribution from all error terms $E_{N+1}^k$  in \eqref{eqn.ce.plus.Ke} coming from the integrals $I\!I_k$ thus add up to a constant, since
\begin{multline*}
\sum_{k=0}^\infty \sup_{\lambda}\vert m(\lambda+i\varepsilon(\Re\rho))\vert
\int_\R\Phi(\lambda)\vert E_{N+1}^k(\lambda)\vert
\vert\cfct(-\lambda-i\varepsilon(\Re\rho))\vert^{-1}\,d\lambda\\
\leq c_\varepsilon(1-\psi(t))\sum_{k=0}^\infty e^{2kt} \leq
c_\varepsilon(1-\psi(t))\sum_{k=0}^\infty e^{k(1-R_0^{1/2})}\leq c_\varepsilon.
\end{multline*}

To complete the proof we need to settle the matter with the function
$K_\varepsilon$, so we simply define it to be whatever remains of
\eqref{eqn.ce.plus.Ke} to be estimated. More precisely, let
\[K_\varepsilon(t) =  \frac{1-\psi(t)}{\Delta'(t)}\sum_{j=0}^N\sum_{k=0}^\infty
e^{-2kt}\biggl|\int_\R\Phi(\lambda)e^{i\lambda
t}q(\lambda,\varepsilon)\gamma_j^k(\lambda+i\varepsilon(\Re\rho))\,d\lambda\biggr|\]
and
$f_j^k(\lambda)=\Phi(\lambda)q(\lambda,\varepsilon)\gamma_j^k(\lambda+i\varepsilon(\Re\rho))$.
Note that $K_\varepsilon(0)$ is well-defined and zero, and that $f_m^k(\lambda)=0$ for
$\lambda <1$. Also note that the $N$.th derivative of $f_j^k$ in with respect to $\lambda$ is
a sum of terms
\[\bigl\{(\tfrac{d}{d\lambda})^am(\lambda))\bigr\}
\bigl\{(\tfrac{d}{d\lambda})^b(\Phi(\lambda)\cfct(\lambda)^{-1})\bigr\}
\bigl\{(\tfrac{d}{d\lambda})^c\gamma_j^k(\lambda)\bigr\}\text{ with }
a+b+c=N.\] The estimates from Theorem \ref{thm.bigt}, combined with the assumption on $m$ and the usual $\cfct$-function estimates, yield
the estimate
\[\bigl|\bigl\{(\tfrac{d}{d\lambda})^am(\lambda))\bigr\}
\bigl\{(\tfrac{d}{d\lambda})^b(\Phi(\lambda)\cfct(\lambda)^{-1})\bigr\}
\bigl\{(\tfrac{d}{d\lambda})^c\gamma_m^k(\lambda)\bigr\}\bigr|\leq
c_\varepsilon\vert\lambda\vert^{-a}(1+\vert\lambda\vert)^{\Re\alpha+\frac{1}{2}-b}
e^{2k}\vert\lambda\vert^{-j-c}\] for $\vert\lambda\vert >1$, which is roughly of size $\vert\lambda\vert^{\Re\alpha+\frac{1}{2}-j-a-b-c}=\vert\lambda\vert^{\Re\alpha+\frac{1}{2}-j-N}$.
By the classical
Plancherel theorem on $\R$,

\[\begin{split}
\|K_\varepsilon\|_{L^2} &\leq
\sum_{j=0}^N\sum_{k=0}^\infty\biggl(\int_{R_0^{1/2}}^\infty e^{-4kt}\biggl|\int
e^{i\lambda t}f_j^k(\lambda)\,d\lambda\biggr|^2\,dt\biggr)^{\frac{1}{2}}\\
&\leq R_0^{-\frac{N}{2}}\sum_{j=0}^N\sum_{k=0}^\infty
e^{-2kR_0^{1/2}}\biggl(\int_\R t^{2N}\biggl|\int e^{i\lambda
t}f_j^k(\lambda)\,d\lambda\biggr|^2\,dt\biggr)^{\frac{1}{2}}\\
&=R_0^{-\frac{N}{2}}\sum_{j=0}^N\sum_{k=0}^\infty
e^{-2kR_0^{1/2}}\biggl(\int_\R\biggl|e^{i\lambda t}
\bigl(\frac{d}{d\lambda}\bigr)^Nf_j^k(\lambda)\,d\lambda\biggr|^2\,dt\biggr)^{\frac{1}{2}}\\
&=c\sum_{j=0}^N\sum_{k=0}^\infty
e^{-2kR_0^{1/2}}\biggl(\int_\R\Bigl|\frac{d^N}{d\lambda^N}f_j^k(\lambda)\Bigr|^2\,d\lambda\biggr)^{\frac{1}{2}}\\
&\leq c_\varepsilon\sum_{j=0}^N\sum_{k=0}^\infty
e^{-2kR_0^{1/2}}e^{2k}\biggl(\int_1^\infty\vert\lambda\vert^{2\Re\alpha+1-2j-2N}\,d\lambda\biggr)^{\frac{1}{2}}\\
&\leq c'_\varepsilon N\biggl(\int_1^\infty\vert\lambda\vert^{2\Re\alpha+1-2N}\,d\lambda\biggr)^{\frac{1}{2}}
\leq c'_\varepsilon N\biggl(\int_1^\infty\vert\lambda\vert^{-2}\,d\lambda\biggr)^{\frac{1}{2}}
\end{split}\]
since $N\geq\Re\alpha+\frac{3}{2}$. The integral $\int_1^\infty\vert\lambda\vert^{-2}\,d\lambda$ being finite, we have thus completed the proof.
\end{proof}

\begin{proposition}\label{prop.4.5}
Assume $\alpha,\beta\in\R$. If $m$ is an even, analytic function in $\Omega_1$ satisfying
\[\vert D^a_xm(x+iy)\vert\leq c_{a,y}(1+\vert x\vert)^{-a}\text{ for } 0\leq
a\leq N \text{ and all }x+iy\in\Omega_1,\] then $m^\vee(1-\psi)$ belongs to
$L^s(d\mu)$ for all $s\in (1,2)$.
\end{proposition}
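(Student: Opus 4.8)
The plan is to derive the proposition directly from the pointwise bound of Lemma \ref{lemma.constrct.Kepsilon}, the growth of $\Delta$, and a single application of Hölder's inequality. Since we assume $\alpha,\beta\in\R$ with $\alpha>\beta>-\frac{1}{2}$ and $\alpha>\frac{1}{2}$, the quantity $\rho=\alpha+\beta+1$ is real and strictly positive, so $\Re\rho=\rho$ throughout. The first observation is that $(1-\psi)$ is supported in $\{t\geq R_0^{1/2}\}$ with $R_0^{1/2}>1$, so the integral defining the $L^s(d\mu)$-norm only involves large $t$, where $\Delta(t)\lesssim e^{2\rho t}$; in particular the small-$t$ behaviour of $\Delta$ never enters.

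Fixing $\varepsilon\in(0,1)$ to be chosen and applying Lemma \ref{lemma.constrct.Kepsilon}, I would estimate
\[
\int_0^\infty\bigl|(1-\psi(t))m^\vee(t)\bigr|^s\,d\mu(t)\lesssim c_\varepsilon^s\int_{R_0^{1/2}}^\infty e^{-s(1+\varepsilon)\rho t}\bigl(1+K_\varepsilon(t)\bigr)^s e^{2\rho t}\,dt.
\]
Using the elementary inequality $(1+K_\varepsilon)^s\lesssim 1+K_\varepsilon^s$, the problem splits into two pieces, each carrying the exponential weight $e^{(2-s(1+\varepsilon))\rho t}$. For the first piece I would simply integrate $e^{(2-s(1+\varepsilon))\rho t}$ over $[R_0^{1/2},\infty)$, which converges precisely when the exponent is negative, i.e.\ when $\varepsilon>\frac{2-s}{s}$. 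For the second piece, namely $\int K_\varepsilon(t)^s e^{(2-s(1+\varepsilon))\rho t}\,dt$, I would apply Hölder's inequality with the conjugate exponents $\frac{2}{s}$ and $\frac{2}{2-s}$: the factor $K_\varepsilon^s$ then sits in $L^{2/s}$ with norm $\|K_\varepsilon^s\|_{2/s}=\|K_\varepsilon\|_2^s<\infty$ by Lemma \ref{lemma.constrct.Kepsilon}, while the exponential factor is integrable in $L^{2/(2-s)}$ under exactly the same condition $\varepsilon>\frac{2-s}{s}$.

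The one point demanding care — and what plays the role of the main obstacle — is the choice of $\varepsilon$. Both estimates require $\varepsilon>\frac{2-s}{s}$, while Lemma \ref{lemma.constrct.Kepsilon} forces $\varepsilon<1$. These constraints are compatible precisely because $s>1$ guarantees $\frac{2-s}{s}<1$, so the interval $\bigl(\frac{2-s}{s},1\bigr)$ is nonempty; picking any $\varepsilon$ in it finishes the proof for the given $s$. As $s\to 1^+$ this admissible window degenerates to the empty set, which is consistent with the anticipated failure of the global estimate at $s=1$ and explains the restriction to $s\in(1,2)$.
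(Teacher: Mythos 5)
Your proof is correct and follows essentially the same route as the paper's: both invoke Lemma \ref{lemma.constrct.Kepsilon}, split off the $K_\varepsilon$ contribution, integrate the pure exponential term directly against $\Delta(t)\simeq e^{2\rho t}$, and handle the $K_\varepsilon$ term by H\"older with conjugate exponents $\tfrac{2}{s}$ and $\tfrac{2}{2-s}$ so as to use $K_\varepsilon\in L^2$, all under the same constraint $\varepsilon>\tfrac{2}{s}-1$. Your explicit check that the admissible window $\bigl(\tfrac{2}{s}-1,1\bigr)$ is nonempty for $s>1$ is a small point the paper leaves implicit.
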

\begin{proof}
Fix numbers $s\in (1,2)$ and $\varepsilon>\frac{2}{s}-1$, and let
$c_\varepsilon$ and $K_\varepsilon$ be as in Lemma
\ref{lemma.constrct.Kepsilon}. It then follows that
\[\|m^\vee(1-\psi)\|_s\leq c_\varepsilon\biggl(\int_0^\infty
e^{-(1+\varepsilon)s\rho t}\vert\Delta(t)\vert\,dt\biggr)^{\frac{1}{s}} +
c_\varepsilon\biggl(\int_0^\infty e^{-(1+\varepsilon)s\rho t}\vert
K_\varepsilon(t)\vert^s\vert\Delta(t)\vert\,dt\biggr)^{\frac{1}{s}},\]
where the first integral is finite, since $\vert\Delta(t)\vert\simeq
e^{2(\Re\rho) t}$ for large $t$. The second integral is bounded
according to the Euclidean H\"older inequality by
\[\biggl(\int_0^\infty e^{-(1+\varepsilon)\frac{2s}{2-s}\rho t}\vert\Delta(t)\vert\,dt\biggr)^{\frac{2-s}{2s}} \times
\biggl(\int_0^\infty\vert K_\varepsilon(t)\vert^2\,dt
\biggr)^{\frac{1}{2}}\] in which the second factor is finite by construction of
$K_\varepsilon$ and the first factor finite since the integrand is dominated by
an exponential function whose exponent is
$-(1+\varepsilon)\frac{2s}{2-s}\rho t+2\rho t= 2\rho t\frac{2-2s-\varepsilon s}{2-\varepsilon}$. This exponential function, in
turn, is integrable on $[0,\infty)$ since $\varepsilon>\frac{2}{s}-1$, $s\in
(1,2)$, so that $\frac{2-2s-\varepsilon s}{2-s}<-s<-1$.
\end{proof}

\begin{proof}[Proof of the 'global part` of Theorem
\ref{thm.hormander.multiplier}] Recall that we have fixed an even, smooth
function $\psi$ on $\R$ with $0\leq\psi\leq 1$, $\psi(t)\equiv 1$ when $\vert
t\vert\leq R_0^{1/2}$, and $\psi(t)\equiv 0$ when $\vert t\vert\geq  R_0$, and
that $k_2(t)=m^\vee(t)(1-\psi(t))$. According to Proposition
\ref{prop.4.5}, the function $k_2$ belongs to $L^r$ for every $r\in(1,2)$,
so Corollary \ref{cor.ClercStein} implies that $\|k_2\star f\|_s\leq
c_s\|f\|_s$ for $s\in(1,\infty)$.
\medskip

It remains to investigate the contribution of the convolution operator $T_{k_2}:f\mapsto k_2\star f$
to the weak type $(1,1)$ bound. Since our assumptions on the kernel guarantee integrability at infinity, as in \cite{Anker-Annals}, we can follow his strategy of reducing the estimates to an Euclidean estimate (cf. \cite[Proposition~5]{Anker-Annals}, in particular Equation (20), loc.cit). Alternatively, one may use the observation from \cite[Section~4]{Anker.duke} and \cite[Remark~2, p.125]{Stromberg}) that convolution against kernels satisfying a suitable estimate involving exponential decay in $\Re\lambda$ (as is indeed guaranteed to hold for our kernel $k_2$, due to Lemma \ref{lemma.constrct.Kepsilon} and its proof) gives rise to a weakly-$L^1$-bounded operator.
\end{proof}

\section{Fractional Integration}\label{sec.fract}
Let $\psi$ be an even, smooth function on $\R$ such that $0\leq\psi\leq 1$,
$\psi(t)\equiv 1$ whenever $\vert t\vert\leq R_0^{1/2}$, and $\psi(t)\equiv 0$
whenever $\vert t\vert\geq R_0$. Let $m_a(\lambda):=(\lambda^2+\rho^2)^{-\frac{a}{2}}$ for $a>0$ and let $k_a=m_a^\vee$. Since $k_a$ acts at least formally as ``fractional integration'' on even functions $f$ on $\R$ via $k_a\star f=-(-\mathcal{L})^{-\frac{a}{2}}f$, it is natural to try and establish an analogue of the famous Hardy--Littlewood--Sobolev theorem on fractional integration on $\R$. As above, we write $k_a$ as a sum $k_a=k_{1,a}+k_{2,a}$ where $k_{1,a}=k_a\psi$ and $k_{2,a}=k_a(1-\psi)$.

\begin{observation}\label{obs.ma}
Observe that $m_a$ belongs to $L^1(d\nu)$ if and only if $a>2(\alpha+1)$.
Indeed,
\[\biggl|\int_1^\infty
m_a(\lambda)\vert\mathbf{c}(\lambda)\vert^{-2}\,d\lambda\biggr|\lesssim
\int_1^\infty\vert\lambda\vert^{2\alpha+1-a}\,d\lambda,\] which is finite if
and only if $2\alpha+1-a<-1$, that is, when $a>2(\alpha+1)$.
\end{observation}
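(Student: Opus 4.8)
The plan is to split the integral defining $\|m_a\|_{L^1(d\nu)}$ at $\lambda=1$ and to argue that the dichotomy is governed entirely by the tail. First I would dispose of the region near the origin. Since $m_a(\lambda)=(\lambda^2+\rho^2)^{-a/2}$ is continuous on $[0,1]$ and bounded by $\rho^{-a}$ there, while the Plancherel density $|\mathbf{c}(\lambda)|^{-2}$ is integrable on $[0,1]$ --- indeed, the explicit formula for $\mathbf{c}$ together with $\Gamma(i\lambda)\thicksim(i\lambda)^{-1}$ as $\lambda\to 0$ shows that $|\mathbf{c}(\lambda)|^{-2}\thicksim c\lambda^2$ near the origin --- the contribution $\int_0^1 m_a(\lambda)|\mathbf{c}(\lambda)|^{-2}\,d\lambda$ is finite for every $a>0$ and plays no role in the equivalence.

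The decisive region is $[1,\infty)$. Here I would invoke the precise asymptotic expansion of Lemma \ref{lemma.precise-c}(i), by which $|\mathbf{c}(\lambda)|^{-2}\thicksim c_0|\lambda|^{2\alpha+1}$ as $|\lambda|\to\infty$ with $c_0>0$. Combining this with the elementary two-sided estimate $2^{-a/2}\lambda^{-a}\leq m_a(\lambda)\leq\lambda^{-a}$, valid for $\lambda\geq\rho$, pins the integrand down up to constants: there exist positive $c_1,c_2$ and a threshold $R$ such that $c_1\lambda^{2\alpha+1-a}\leq m_a(\lambda)|\mathbf{c}(\lambda)|^{-2}\leq c_2\lambda^{2\alpha+1-a}$ for all $\lambda\geq R$.

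The claim then reduces to the convergence criterion for a power integral at infinity. The upper bound gives $\int_R^\infty m_a(\lambda)|\mathbf{c}(\lambda)|^{-2}\,d\lambda\lesssim\int_R^\infty\lambda^{2\alpha+1-a}\,d\lambda<\infty$ exactly when $2\alpha+1-a<-1$, that is $a>2(\alpha+1)$; this is the ``if'' direction. The lower bound shows the same integral dominates $c_1\int_R^\infty\lambda^{2\alpha+1-a}\,d\lambda=\infty$ whenever $2\alpha+1-a\geq-1$, that is $a\leq 2(\alpha+1)$; this is the ``only if'' direction. Since the argument is merely a matching of exponents there is no genuine obstacle, and the only point requiring attention is to exploit the \emph{two-sided} nature of the asymptotic in Lemma \ref{lemma.precise-c}(i), so that the divergence half of the equivalence is actually established rather than only the convergence half.
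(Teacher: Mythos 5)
Your proposal is correct and follows essentially the same route as the paper: split at $\lambda=1$, note the local part is harmless, and match exponents at infinity using $\vert\mathbf{c}(\lambda)\vert^{-2}\thicksim c_0\vert\lambda\vert^{2\alpha+1}$ from Lemma \ref{lemma.precise-c}(i). You are in fact slightly more careful than the paper, which only displays the upper bound $\lesssim$ and leaves the divergence half of the ``if and only if'' implicit, whereas you correctly observe that the two-sided asymptotic is what makes the ``only if'' direction legitimate.
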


\begin{lemma}\label{lemma.tec.real}
Assume $\alpha>\frac{1}{2}$, $\alpha>\beta>-\frac{1}{2}$, and set $n_\alpha=2(\alpha+1)$.
\begin{enumerate}\romannum
\item\label{lemmar1} If $a=n_\alpha$ there exist finite constants $c_1,c_2$ such that $c_1\leq\vert k_{1,a}(t)/\log t\vert\leq c_2$ for all $t\geq 0$. In this case, $k_{1,a}$ belongs to $L^p(d\mu)$ if and only if $p\in[1,\infty)$.

\item\label{lemmar2} If $a\in(0,n_\alpha)$ there exist finite constants $c_1,c_2$ such that $c_1\leq\vert k_{1,a}(t)/t^{a-n_\alpha}\vert\leq c_2$ for all $t\geq 0$. In this case, $k_{1,a}$ belongs to $L^p(d\mu)$ if and only if $p\in[1,\frac{n_\alpha}{n_\alpha-a})$.

\item\label{lemmar3} $k_{2,a}$ belongs to $L^p(d\mu)$ if and only if $p\in(1,\infty]$.

\item\label{lemmar4} Assume $a>n_\alpha$ and $p\in [1,\infty]$. Then $k_{1,a}$ belongs to $L^p(d\mu)$.
\end{enumerate}
\end{lemma}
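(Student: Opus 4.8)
The plan is to treat the three local pieces (\ref{lemmar1}), (\ref{lemmar2}), (\ref{lemmar4}) and the global piece (\ref{lemmar3}) by different mechanisms. Case (\ref{lemmar4}) is immediate: by Observation \ref{obs.ma}, $a>n_\alpha$ forces $m_a\in L^1(d\nu)$, so $\int_0^\infty m_a(\lambda)\varphi_\lambda(t)|\cfct(\lambda)|^{-2}\,d\lambda$ converges absolutely and uniformly in $t$ (using that $\varphi_\lambda$ is uniformly bounded for real $\lambda$); hence $k_{1,a}=\psi\cdot k_a$ is bounded with compact support and therefore lies in every $L^p(d\mu)$, $p\in[1,\infty]$.

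For the singular local cases I would insert the asymptotic expansion of Theorem \ref{thm.asymptoticEXP} into $k_{1,a}(t)=\psi(t)\int_0^\infty m_a(\lambda)\varphi_\lambda(t)|\cfct(\lambda)|^{-2}\,d\lambda$. First I would peel off small frequencies: writing $m_a=m_a\Phi+m_a(1-\Phi)$ with $\Phi$ a smooth even cutoff vanishing for $|\lambda|<1$ and equal to $1$ for $|\lambda|>2$, the factor $m_a(1-\Phi)$ is smooth and supported in $[0,2]$, so $\psi\cdot(m_a(1-\Phi))^\vee$ is bounded with compact support and irrelevant to $L^p$-membership, while $m_a\Phi$ satisfies the hypotheses of Proposition \ref{prop.L1-error} and Corollary \ref{cor.4.3}. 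Using $\Delta'(t)\simeq t^{\alpha+\frac12}$ near $0$ and $|\cfct(\lambda)|^{-2}\simeq c_0\lambda^{2\alpha+1}$ for large $\lambda$ (Lemma \ref{lemma.precise-c}), the leading ($m=0$) term reduces to the model integral $J_a(t)=\int_1^\infty\lambda^{2\alpha+1-a}\mathcal J_\alpha(\lambda t)\,d\lambda$, whose small-$t$ behaviour I would read off via the substitution $u=\lambda t$ and the Weber--Schafheitlin value $\int_0^\infty u^{\alpha+1-a}J_\alpha(u)\,du=2^{\alpha+1-a}\Gamma(\alpha+1-\tfrac a2)/\Gamma(\tfrac a2)$, which is finite and nonzero precisely for $0<a<n_\alpha$. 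This yields $J_a(t)\simeq t^{a-n_\alpha}$ when $a\in(0,n_\alpha)$, and for $a=n_\alpha$ (exponent $-1$) splitting at $\lambda=1/t$ produces the factor $|\log t|$. The higher terms of the expansion, together with the corrections from the full expansions of $m_a$ and $|\cfct|^{-2}$, scale as strictly higher powers $t^{a-n_\alpha+2m}$ and $t^{a-n_\alpha+j}$, hence are lower order, and the error $E_{M+1}$ is controlled as in Proposition \ref{prop.L1-error}. Once $|k_{1,a}(t)|\simeq t^{a-n_\alpha}$ (resp. $|\log t|$) near $0$ is in hand, the stated ranges follow from $\int_0^{R_0}|k_{1,a}(t)|^p t^{2\alpha+1}\,dt$, which is finite iff $p<\tfrac{n_\alpha}{n_\alpha-a}$ in case (\ref{lemmar2}), and finite for every $p<\infty$ but not $p=\infty$ (logarithmic blow-up) in case (\ref{lemmar1}); the matching lower bounds supply sharpness.

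The main obstacle is justifying the passage to $J_a(t)$ when $a\le\alpha+\tfrac12$: there the integrand behaves like $\lambda^{\alpha+\frac12-a}\cos(\lambda t-\cdots)$, which does not decay, so $J_a$ is only an oscillatory integral and the Weber--Schafheitlin identity must be read as an analytic continuation. I would make this rigorous exactly as in the proof of Proposition \ref{prop.L1-error}, using the Bessel recursion $\mathcal J_\alpha(\lambda t)=c_k t^{-2k}\bigl(\tfrac1\lambda\tfrac{d}{d\lambda}\bigr)^k\mathcal J_{\alpha-k}(\lambda t)$ to move $k$ derivatives onto the smooth, rapidly decaying factor $m_a(\lambda)\Phi(\lambda)|\cfct(\lambda)|^{-2}$ until the integral converges absolutely; this changes only the representation, not the leading power of $t$, and the two-sided bound persists because the leading coefficient is the nonzero Weber--Schafheitlin constant. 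This is bookkeeping rather than a conceptual difficulty, and is the analogue of the delicate endpoint discussion (integer $\alpha$) at the end of \cite[Proposition~4.1]{Stanton-Tomas}.

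Finally, for (\ref{lemmar3}) I would argue the two inclusions separately. Since $m_a$ is holomorphic on $\Omega_1$ and satisfies $|D_x^j m_a(x+iy)|\lesssim(1+|x|)^{-j}$, Lemma \ref{lemma.constrct.Kepsilon} applies and gives $|k_{2,a}(t)|\le c_\varepsilon e^{-(1+\varepsilon)\rho t}(1+K_\varepsilon(t))$ with $K_\varepsilon\in L^2$. The case $p=\infty$ is clear ($k_{2,a}$ is bounded and decaying); $p\in(1,2)$ is Proposition \ref{prop.4.5}; $p=2$ follows from the above bound, since $e^{-2(1+\varepsilon)\rho t}\Delta(t)\lesssim e^{-2\varepsilon\rho t}$ and $(1+K_\varepsilon)^2\in L^1$; and $p\in(2,\infty)$ then follows by interpolating between $L^2(d\mu)$ and $L^\infty(d\mu)$. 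For the failure at $p=1$ I would avoid kernel asymptotics altogether: $k_{1,a}\in L^1(d\mu)$ in all of (\ref{lemmar1}), (\ref{lemmar2}), (\ref{lemmar4}), so if $k_{2,a}$ also lay in $L^1(d\mu)$ then $k_a\in L^1(d\mu)$ and, since $|\varphi_\lambda(t)|\le\varphi_{i\Im\lambda}(t)$ and the family $\{\varphi_{i\sigma}\}_{0\le\sigma\le\rho}$ is uniformly bounded, its Jacobi transform would be bounded and continuous on $\overline{\Omega_1}$. But $\widehat{k_a}=m_a$ and $m_a(i\sigma)=(\rho^2-\sigma^2)^{-a/2}\to\infty$ as $\sigma\uparrow\rho$, so $m_a$ blows up at $i\rho\in\partial\Omega_1$; this contradiction forces $k_{2,a}\notin L^1(d\mu)$, completing (\ref{lemmar3}).
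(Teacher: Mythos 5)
Your proposal is correct, and its overall architecture coincides with the paper's: the same low/high-frequency splitting via the cutoff $\Phi$ and Corollary~\ref{cor.4.3} for the local pieces, the same one-line argument for (iv) from Observation~\ref{obs.ma}, and for (iii) the same combination of Proposition~\ref{prop.4.5} for $p\in(1,2)$, the pointwise bound of Lemma~\ref{lemma.constrct.Kepsilon} for the upper end, and for the failure at $p=1$ exactly the paper's contradiction (your direct continuity argument for $\widehat{k_a}$ on $\overline{\Omega}_1$ is just an unwinding of Lemma~\ref{lemma.necessary}, which is what the paper cites). The one genuine divergence is the evaluation of the leading small-$t$ term in (i) and (ii). The paper computes $\int_0^\infty\mathcal{J}_\alpha(\lambda t)(\lambda^2+\rho^2)^{-a/2}\lambda^{2\alpha+1}\,d\lambda$ in closed form from \cite[Formula~(20), p.~24]{ErdelyiII}, obtaining $c\,t^{\frac{a}{2}-\alpha-1}K_{\alpha-\frac{a}{2}+1}(\rho t)$ with $K_\mu$ the Macdonald function, and then reads the two-sided bounds $t^{a-n_\alpha}$ (and the logarithm in case (i), from $K_0$) off the small-argument asymptotics of $K_\mu$; you instead rescale $u=\lambda t$ and identify the limiting constant as the Weber--Schafheitlin value $\int_0^\infty u^{\alpha+1-a}J_\alpha(u)\,du=2^{\alpha+1-a}\Gamma(\alpha+1-\tfrac{a}{2})/\Gamma(\tfrac{a}{2})$, which is indeed finite and strictly positive precisely for $0<a<n_\alpha$, with the log at $a=n_\alpha$ extracted separately by splitting at $\lambda=1/t$. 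Both give the same answer. The table formula buys a single exact expression that absorbs the inhomogeneous weight $(\lambda^2+\rho^2)^{-a/2}$ and both asymptotic regimes at once; your route is more elementary and self-contained but needs the two additional (routine) justifications you correctly flag, namely that replacing $(\lambda^2+\rho^2)^{-a/2}$ by $\lambda^{-a}$ only contributes corrections of order $t^{a-n_\alpha+2}$, and that the conditionally convergent Weber--Schafheitlin integral for $a\le\alpha+\tfrac{1}{2}$ must be regularized by the Bessel-recursion integration by parts without disturbing the nonzero leading coefficient. One caveat you share with the paper: the $L^\infty$ endpoint in (iii) is deduced from $\vert k_{2,a}(t)\vert\le c_\varepsilon e^{-(1+\varepsilon)\rho t}(1+K_\varepsilon(t))$ although $K_\varepsilon$ is only shown to lie in $L^2$, so the range $p\in(2,\infty]$ really requires a pointwise bound on $K_\varepsilon$ (or an $L^p$ bound for large $p$) that neither argument spells out.
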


\begin{proof}
Statement \refr{lemmar3} follows easily from \refr{lemmar1} and \refr{lemmar2} as follows: If $k_{2,a}$ were integrable, then $\widehat{k}_{2,a}$ had to be continuous on the boundary of $\Omega_1$, according to Lemma \ref{lemma.necessary}. Since $\widehat{k}_{1,a}$ is always integrable by \refr{lemmar1} and \refr{lemmar2}, it would follow that $\widehat{k}_a=m_a$ were integrable, and therefore -- again by Lemma \ref{lemma.necessary} -- continuous on the boundary of $\Omega_1$. This is false, however, since $m_a$ has a singularity in the boundary point $\lambda=i\rho$.

On the other hand $k_{2,a}$ is even, analytic in $\Omega_1$ and satisfies the differential estimates of Proposition \ref{prop.4.5}, so $k_{2,a}$ belongs to $L^p(d\mu)$ for $p\in(1,2)$. Statement \refr{lemmar3} will thus follow from interpolation once we have established that $k_{2,a}$ is also in $L^\infty(d\mu)$. But according to Lemma \ref{lemma.constrct.Kepsilon}, one can bound $k_{2,a}=m_a^\vee(1-\psi)$ by
$\vert k_{2,a}(t)\vert\leq c_\varepsilon e^{-(1+\varepsilon)\rho t}(1+K_\varepsilon(t))$ for all $t\geq 0$,
where $K_\epsilon$ is a nonnegative $L^2$-function and $\varepsilon\in(0,1)$. The right hand side thus being essentially bounded on $\R$, we conclude that $k_{2,a}$ is in $L^\infty(d\mu)$.
\medskip

For the proof of \refr{lemmar4} we simply observe that $m_a$ is integrable if $a>n_\alpha$, by Observation \ref{obs.ma}, in which case $k_{1,a}$ is bounded compactly supported function and correspondingly in $L^p(d\mu)$ for all $p\in[1,\infty]$.
\medskip

In order to prove the remaining estimates in \refr{lemmar1} and \refr{lemmar2} we fix a, even, smooth function
 $\Phi$ on $\R$ such that $0\leq\Phi\leq 1$, $\Phi(\lambda)\equiv 1$ when
$\vert\lambda\vert>2$ and $\Phi(\lambda)\equiv 0$ when $\vert\lambda\vert<1$. Upon applying Corollary \ref{cor.4.3} to the function $m=\Phi m_a$, we may write $k_{1,a}$ in the form $k_{1,a}=m^\vee\psi+F$, where $F$ is a bounded remainder term, whence, by Corollary \ref{cor.4.3},
\[k_{1,a}(t)=c\int_0^\infty m_a(\lambda)\Phi(\lambda)\mathcal{J}_\alpha(\lambda t)\vert\mathbf{c}(\lambda)\vert^{-2}\,d\lambda + \sum_{m=1}^N e_m(t)+e(t)+F\text{ for } t\leq R_0\]
where $e$ is bounded, $\vert e_1(t)\vert\lesssim t^{-n_\alpha}$, and $\vert e_m(t)\vert\lesssim t^{2(m-1)-N}$ for $m\geq 2$ (where $N$ is the least integer greater than $\alpha+1$). Since $\vert\mathbf{c}(\lambda)\vert^{-2}\lesssim\vert\lambda\vert^{2\alpha+1}$ for $\vert\lambda\vert>2$ by Lemma \ref{lemma.precise-c}, the main contribution to the singularity of $k_{1,a}$ at $t=0$ thus comes from $t\mapsto \int_0^\infty m_a(\lambda)\Phi(\lambda)\mathcal{J}_\alpha(\lambda t)\vert\mathbf{c}(\lambda)\vert^{-2}\,d\lambda$ which we estimate by
$\int_0^\infty\mathcal{J}_\alpha(\lambda
t)(\lambda^2+\rho^2)^{-\frac{a}{2}}\lambda^{n_\alpha}\,d\lambda$. The latter integral can be calculated with the help of \cite[Formula~(20), p.24]{ErdelyiII}, indeed

\[\begin{split}
\int_0^\infty\mathcal{J}_\alpha(\lambda t)(\lambda^2+\rho^2)^{-\frac{a}{2}}\lambda^{2\alpha+1}\,d\lambda & =c\int_0^\infty J_\alpha(\lambda t)(\lambda t)^{-\alpha}(\lambda^2+\rho^2)^{-\frac{a}{2}}\lambda^{n_\alpha}\,d\lambda\\
&= ct^{-\alpha-\frac{1}{2}}\int_0^\infty J_\alpha(\lambda t)(\lambda^2+\rho^2)^{-\frac{a}{2}}\lambda^{\alpha+\frac{1}{2}}(\lambda t)^{\frac{1}{2}}\,d\lambda\\
&=c't^{-\alpha-\frac{1}{2}} \frac{e^{\alpha-(\frac{a}{2}-1)}t^{\frac{a}{2}-1+\frac{1}{2}}}{2^{\frac{a}{2}-1}\Gamma(\frac{a}{2})}K_{\alpha-(\frac{a}{2}-1)}(\rho t)\\
&=c''t^{\frac{a}{2}-\alpha-1}K_{\alpha-\frac{a}{2}+1}(\rho t),
\end{split}\]
where $K_\mu$ is a Bessel function of the third kind, of order $\mu$. We remind the reader that $K_\mu$ is defined as $K_\mu(z)=\frac{\pi}{2}\frac{I_{-\mu}(z)-I_\mu(z)}{\sin(\pi\mu)}$, where
\[I_\mu(z)=e^{-\frac{i\pi\mu}{2}}J_\mu(iz)=\sum_{k=0}^\infty\frac{1}{k!\Gamma(k+\mu+1)}\Bigl(\frac{z}{2}\Bigr)^{2k+\mu}.\]
Correspondingly,
\[K_\mu(z)=\frac{\pi}{2}\frac{1}{\sin(\pi\mu)}\sum_{k=0}^\infty\frac{(z/2)^{2k}}{k!}\biggl[\frac{1}{\Gamma(k-\mu+1)}\Bigl(\frac{z}{2}\Bigr)^{-\mu} - \frac{1}{\Gamma(k+\mu+1)}\Bigl(\frac{z}{2}\Bigr)^\mu\biggr],\]
by which it is seen that  $K_\mu(z)\thicksim 2^{\Re\mu-1}\Gamma(\mu)\vert z\vert^{-\Re\mu}$ as $z\to 0$ for $\Re\mu>0$, and $K_0(z)\thicksim\log (1/\vert z\vert)$ as $z\to 0$. Note that $\rho>0$ and $\alpha-1>-1$ by the standing assumption on $\alpha, \beta$. When $\alpha-\frac{a}{2}+1>0$, that is, when $a<n_\alpha$, we thus obtain the estimate
\[\vert k_{1,a}(t)\vert\thicksim \vert t^{\frac{a}{2}-\alpha-1}K_{\alpha-\frac{a}{2}+1}(\rho t)\vert \thicksim \vert t^{\frac{a}{2}-\alpha-1}t^{-(\alpha-\frac{a}{2}+1)}\vert = \vert t^{a-n_\alpha}\vert.\]
When $a=n_\alpha$, one has $\vert k_{1,a}(t)\vert\thicksim \vert K_0(\rho t)\vert\thicksim \vert\log(1/\vert t\vert)\vert = \vert\log t\vert$,
proving the estimates in \refr{lemmar1} and \refr{lemmar2}. The statements concerning integrability of $k_{1,a}$ now follow easily.
\end{proof}

\begin{theorem}\label{thm.frac}
Let $a>0$. The operator $I_a:f\mapsto k_a\star f$ is bounded from $L^p(d\mu)$ to $L^q(d\mu)$ if and only if either $p=q$ and $p\in(1,\infty)$, or $p<q$ and either
\begin{enumerate}
\romannum
\item\label{thmr1} $a>n_\alpha$, or
\item\label{thmr2} $a=n_\alpha$ and $q<\infty$, or
\item\label{thmr3} $0<a<n_\alpha$, and one of the following three conditions hold:
\begin{enumerate}
    \item $p>\frac{n_\alpha}{a}$;
    \item $1<p<\frac{n_\alpha}{a}$ and $\frac{1}{p}-\frac{a}{n_\alpha}\leq\frac{1}{q}$;
    \item $p=1$ and $1-\frac{a}{n_\alpha}<\frac{1}{q}<1$.
\end{enumerate}
\end{enumerate}
\end{theorem}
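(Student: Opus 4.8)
The plan is to decompose the operator $I_a$ as $I_a = I_{1,a} + I_{2,a}$, where $I_{1,a}f = k_{1,a}\star f$ and $I_{2,a}f = k_{2,a}\star f$, and to treat the local and global kernels by entirely different mechanisms. The $L^p$-integrability of each piece is already catalogued in Lemma~\ref{lemma.tec.real}, so the work consists in converting that kernel information into operator bounds via the appropriate convolution inequalities from Section~\ref{section.multipliers}, and in proving the sharpness (``only if'') half by producing explicit counterexamples from the boundary singularity of $m_a$. I would first dispose of the diagonal case $p=q$: for any $a>0$ the kernel $k_{2,a}$ lies simultaneously in all $L^r(d\mu)$, $r\in(1,1+\delta)$, by Lemma~\ref{lemma.tec.real}\refr{lemmar3}, so Corollary~\ref{cor.ClercStein} immediately gives boundedness of $I_{2,a}$ on every $L^s(d\mu)$, $s\in(1,\infty)$; the local piece $k_{1,a}$ has compact support with a mild $\log t$ or $t^{a-n_\alpha}$ singularity, so $I_{1,a}$ is handled by the general Young inequality (Proposition~\ref{prop.young.ineq}) once one checks $k_{1,a}\in L^1(d\mu)$, which holds except in the endpoint cases where one instead invokes the Kunze--Stein estimate (Lemma~\ref{lemma.KunzeStein}) to stay on the diagonal.

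For the off-diagonal estimates $p<q$ the global part is again the easy one: since $k_{2,a}$ is bounded and exponentially decaying, $I_{2,a}$ maps $L^p(d\mu)\to L^q(d\mu)$ for every pair $p\le q$ by Young's inequality together with the Kunze--Stein phenomenon, and in fact its contribution is never the obstruction to any of the stated ranges. Everything therefore reduces to the local convolution $I_{1,a}$ with kernel behaving like $t^{a-n_\alpha}$ near the origin (the genuine Riesz-potential singularity) and compactly supported. The strategy is to prove a weak-type endpoint estimate at the critical exponents and then interpolate. Concretely, using $\mu(B(t,r))\thicksim r\,t^{2\alpha+1}=r\,t^{n_\alpha-1}$ for small $t$ from the homogeneous-space analysis in Section~\ref{section.local}, the singularity $t^{a-n_\alpha}$ is exactly of Riesz type relative to the homogeneous dimension $n_\alpha$, so the classical Hardy--Littlewood--Sobolev machinery on a space of homogeneous type yields the boundary-line bound $\frac1p-\frac1q=\frac{a}{n_\alpha}$ as a weak-type $(p,q)$ estimate, with the strong bound following in the interior of the line by Marcinkiewicz-type interpolation. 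The cases \refr{thmr3}(a)--(c) then correspond precisely to the region on and above this line, the endpoint $p=1$ being weak-type only (hence the strict inequality in (c)), and \refr{thmr1}, \refr{thmr2} to $a\ge n_\alpha$ where $k_{1,a}\in L^1(d\mu)$ (resp.\ is only log-singular) so that Young's inequality already furnishes the full range with the single exclusion $q=\infty$ at $a=n_\alpha$.

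The sharpness direction requires producing, for each excluded parameter pair, a function $f$ witnessing unboundedness. Here the boundary singularity of $m_a$ at $\lambda=i\rho$ noted in the proof of Lemma~\ref{lemma.tec.real}\refr{lemmar3} is the key input: if $I_a$ were bounded $L^p\to L^q$ outside the admissible region, testing against $\varphi_\lambda$ (which lies in $L^{p'}(d\mu)$ for $\lambda\in\Omega_p$ by Lemma~\ref{lemma.Lq}) together with the eigenfunction relation $T_{m_a}\varphi_\lambda=m_a(\lambda)\varphi_\lambda$ forces $m_a$ to extend continuously to $\overline{\Omega_1}$, contradicting the singularity, exactly as in Lemma~\ref{lemma.necessary}; the endpoint failures at $q=\infty$ and at the critical line are then pinned down by inserting the explicit near-origin asymptotics of $k_{1,a}$ and computing norms directly.

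The step I expect to be the main obstacle is the weak-type endpoint estimate on the critical line and the associated Lorentz-space bookkeeping: the local kernel is only a \emph{local} Riesz kernel on an inhomogeneous measure space whose ball volumes interpolate between $t^{n_\alpha-1}$ for small $t$ and exponential growth for large $t$, so one cannot quote a homogeneous Hardy--Littlewood--Sobolev theorem verbatim. The delicate point — and precisely the ``small gap in an argument pertaining to Lorentz space estimates'' flagged in the introduction — is to show that the weak-type $(L^p,L^q)$ bound on the boundary line genuinely upgrades to strong type in the interior via real interpolation between Lorentz spaces, keeping careful track of which endpoints survive as weak-type only. I would therefore spend most of the effort setting up the Lorentz-space formulation of the local estimate cleanly before interpolating.
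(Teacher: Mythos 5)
Your sufficiency argument is essentially the paper's proof: the same splitting $k_a=k_{1,a}+k_{2,a}$, the same reliance on Lemma~\ref{lemma.tec.real} to catalogue which $L^p(d\mu)$ each piece lies in, Young/Riesz--Thorin for cases \refr{thmr1}--\refr{thmr2} and for the global part, and weak-type endpoint bounds on the critical line $\frac1p-\frac1q=\frac{a}{n_\alpha}$ upgraded by Marcinkiewicz--Lorentz interpolation in case \refr{thmr3}. The one substantive difference is that where you invoke ``Hardy--Littlewood--Sobolev machinery on a space of homogeneous type'' (correctly flagging that it cannot be quoted verbatim for the hypergroup convolution), the paper sidesteps the metric-space formulation entirely and instead uses the weak-type Young inequality for the Jacobi convolution, $\|k_{1,a}\star f\|_{s,\infty}\leq c\,\|f\|_p\|k_{1,a}\|_{r,\infty}$ with $\frac1s+1=\frac1p+\frac1r$, together with the observation that $k_{1,a}\in L^{r,\infty}(d\mu)$ precisely for $r=\frac{n_\alpha}{n_\alpha-a}$; this delivers the weak $(1,r)$ and $(r',\infty)$ endpoints directly and is the cleaner route. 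For the necessity half your sketch also diverges: you propose the analyticity obstruction of Lemma~\ref{lemma.necessary} (which the paper indeed uses, but only to exclude $p=1$ and $p=\infty$ on the diagonal) plus direct norm computations, whereas the paper tests the operator on the kernels themselves via the semigroup identity $k_a\star k_b=k_{a+b}$, which reduces the failure of $\|k_a\star k_b\|_q\lesssim\|k_b\|_p$ outside the admissible region back to the integrability table of Lemma~\ref{lemma.tec.real}; that trick is worth adopting, as it makes the sharpness essentially free once the lemma is in place.
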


\begin{figure}
 \includegraphics[scale=0.9]{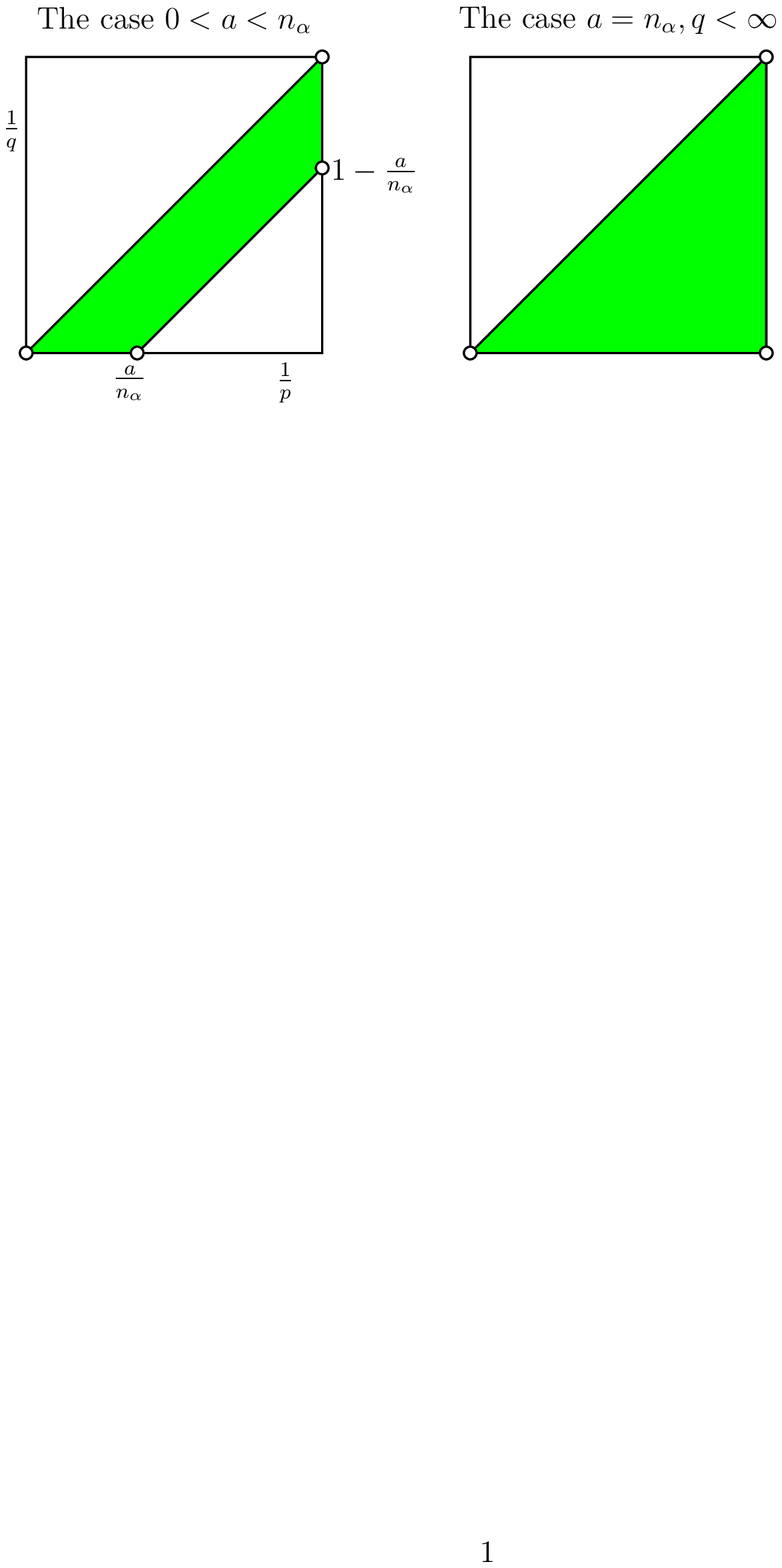}
 \caption{The Riesz potential is $L^p-L^q$ bounded in the solid regions.}
\end{figure}
\begin{proof}
First assume $p=q$. Presently both $k_{1,a}$ and $k_{2,a}$ are in $L^p(d\nu)$, with $p>1$, so the the convolution operator $f\mapsto k_a\star f$ is bounded on $L^p(d\mu)$ according to Theorem \ref{thm.hormander.multiplier}. The convolution operator cannot be bounded on neither $L^1$ nor $L^\infty$, however, since $L^1$- and $L^\infty$ multipliers for the Jacobi transform are continuous on $\overline{\Omega}_1$.
\medskip

Next suppose $p\neq q$. Convolution cannot be $L^p-L^q$-bounded unless $p<q$, so the case $p>q$ yields nothing and may be disregarded.

In case (i), it follows from Lemma \ref{lemma.tec.real} that $k_a$ belongs to $L^p$ whenever $p\in(1,\infty]$ and therefore defines an $L^p$-bounded operator, $\|k_a\star f\|_p\leq\|k_a\|_p\|f\|_1$ for $p\in(1,\infty]$. Moreover
 \[\vert k_a\star f(x)\vert\leq \int_0^\infty\vert k_a(y)\vert\vert\tau_xf(y)\vert\,d\mu(y)\leq \|k_a\|_{p'}\|\tau_xf\|_{p}\leq \|k_a\|_{p'}\|f\|_p,\]
 so that also $\|k_a\star f\|_\infty\leq\|k_a\|_{p'}\|f\|_p$ holds (this also follows from Young's inequality). Here $\|k_a\|_{p'}<\infty$ (since $p'\in(1,\infty)$ whenever $p\in(1,\infty)$. Note that we do not allow the possibility $p=\infty$ due to the requirement that $q>p$). In other words, $T_a$ is of strong type $(1,r)$ for all $r\in(1,\infty]$ and strong type $(s,\infty)$  for all $s\in[1,\infty)$. We wish to show that $T_a$ is therefore also strong type $(p,q)$, with $p$ and $q$ as stated in the theorem. This follows from the Riesz--Thorin interpolation theorem but it might be useful to explain how we tweak the interpolation parameters, as this point is somewhat confusing in the proof of \cite[Theorem~6.1]{Stanton-Tomas}. We thus seek a parameter $\theta\in(0,1)$ and particular choices for $r,s$ such that
 \[\frac{1}{p}=\frac{\theta}{1}+\frac{1-\theta}{r},\quad\text{ and } \quad \frac{1}{q}=\frac{\theta}{s}+\frac{1-\theta}{\infty}=\frac{\theta}{s}.\]
 This necessitates the choice $\theta:=s/q$, with $1\leq s<q$. If we assume for simplicity that $s=r'$, then $\theta=\frac{\frac{r}{r-1}}{q}=\frac{r}{q(r-1)}$, and $r$ must therefore solve the equation
 \[\frac{1}{p}=\frac{r}{q(r-1)}+\frac{1-\frac{r}{q(r-1)}}{r}=\frac{r^2+qr-q-r}{qr(r-1)},\]
 that is, we must take $r=\frac{pq}{q-p}$, which is indeed permissible since $p<q$ by assumption. It thus follows by interpolation that $T_a:f\mapsto k_a\star f$ is $L^p-L^q$ bounded.
\smallskip

In case (ii) it still holds that $k_a$ belongs to $L^p$ for $p\in(1,\infty)$ and we may repeat the reasoning used in case \refr{thmr1}, except that we must avoid using the inequality $\|k_{n_\alpha}\star f\|_\infty\leq\|f\|_1\|k_{n_\alpha}\|_\infty$ (which is still correct but useless since $\|k_{n_\alpha}\|_\infty=\infty$).
\smallskip

In case (iii) we use that $k_{2,a}$ will belong to $L^p$ for all $p\in(1,\infty]$, and in all of the cases $(a)$, $(b)$, and $(c)$. Convolution with $k_{2,a}$ is therefore $L^p-L^q$-bounded whenever $p<q$, by interpolation arguments identical so the ones above, and it thus remains to investigate boundedness of the convolution operator $T_{1,a}:f\mapsto f\star k_{1,a}$ in the three cases $(a)$, $(b)$, and $(c)$. It still holds that  $\|k_{1,a}\star f\|_r\leq\|k_{1,a}\|_r\|f\|_1$ and $\|k_{1,a}\star f\|_\infty\leq \|k_{1,a}\|_s\|f\|_{s'}$,
but $k_{1,a}$ fails to belong to $L^s(d\mu)$ for $s\geq\frac{n_\alpha}{n_\alpha-a}$, and Riesz--Thorin interpolation will therefore not give the full range of admissible $p$ and $q$. One can show, however, that convolution with $k_{1,a}$ is  weakly $L^1-L^r$ bounded and weakly $L^{r'}-L^\infty$ bounded, in the sense that $\|k_{1,a}\star f\|_{r,\infty}\leq c\|f\|_1$ and $\|k_{1,a}\star f\|_{\infty}\leq d\|f\|_{r'}$. The estimates follow from the Young inequality for weak $L^p$ spaces, to the effect that $\|k_{1,a}\star f\|_{s,\infty}\leq c_{p,r,s}\|f\|_p\|k_{1,a}\|_{r,\infty}$ for all $p,r,s\in[1,\infty]$ satisfying the relation $\frac{1}{s}+1=\frac{1}{p}+\frac{1}{r}$. Let us remind the reader that the weak $L^p$-norm $\|f\|_{s,\infty}$ of a measurable function $f$ is defined by
$\|f\|_{s,\infty}=\sup_{\gamma>0}\gamma d_f(\gamma)^{1/s}$, where $d_f(\alpha)=\mu(\{x\in\R^+\,\vert,\vert f(x)\vert>\alpha\})$. The Marcinkiewicz interpolation theorem for Lorentz spaces now yields the desired conclusion.
\medskip

To complete the proof we now simply observe that
estimates $\|k_{1,a}\star f\|_p\lesssim\|f\|_1$ and $\|k_{1,a}\star
f\|_\infty\lesssim\|f\|_{p'}$ break down whenever $p\geq \frac{n_\alpha}{n_\alpha-a}$.  It follows from the identity $\widehat{k_a\star
k_b}=\widehat{k_a}\widehat{k_b}=m_am_b=m_{a+b}=\widehat{k_{a+b}}$ and injectivity of the Jacobi transform that $k_a\star k_b=k_{a+b}$. If
either $p>1$ or $q<\infty$, we thus see that in order for
$\|k_a\star k_b\|_q\lesssim\|k_b\|_p$ to hold for some $(p,q)$, some $a<n_\alpha$, and
all $b>\frac{n_\alpha}{p'}$, the $a$, $p$, and $q$ have to be related as in Lemma
\ref{lemma.tec.real}. This precisely amounts to case (iii)(c), thereby finishing the proof.
\end{proof}
One may view the above result as a Jacobi analysis-analogue of \cite[Corollary~4.2]{Anker.duke} (specialized to Riesz potentials). It directly generalizes \cite[Theorem~6.1]{Stanton-Tomas}.

\section{Some Interesting Special Cases}\label{section.examples}
We now briefly explain how to specialize the results from previous sections in
order to obtain interesting multiplier results in more familiar settings.

\subsection{Damek--Ricci Spaces}\label{subsec.DM}
Let $\n$ be a two-step nilpotent Lie algebra with inner product
$\brac{\cdot}{\cdot}$ and associated norm $\|\cdot\|$, and let $\mathfrak{v}$
and $\z$ denote complementary orthogonal subspaces of dimension $m_\v$ and
$m_\z$, respectively, in $\n$ such that $[\n,\z]=\{0\}$ and $[\n,\n]\subset\z$.
Let $N$ denote the connected, simply connected Lie group with Lie algebra $\n$.
The algebra $\n$ (and by convention the group $N$) is of \emph{$H$-type} if for
every $Z$ in $\z$ the map $J_Z:\v\to\v$ defined by
$\brac{J_ZV}{V'}=\brac{Z}{[V,V']}$ satisfies the requirement that
$\|J_ZX\|=\|Z\| \|X\|$ for all $X\in\v$ and all $Z\in\z$. Upon identifying $N$
with its Lie algebra via $\v\times\z:N\to N$, $ (V,Z)\mapsto\exp(V+Z)$, the
group multiplication in $N$ becomes
\[(V,Z)(V',Z')=(V+V',Z+Z'+\frac{1}{2}[V,V']),\quad V,V'\in\v, Z,Z'\in\z.\]

The abelian group $A:=\R^+$ acts naturally by dilations $\delta_a:N\to N$,
$\delta_a(V,Z)=(a^{1/2}V,aZ)$, $(V,Z)\in N$, $a\in A$. As $\delta_a$ is an
automorphism, we may form the semi-direct product $S=N\rtimes A$. Recall that
the group multiplication on $S$ is defined by
\[(V,Z,a)(V',Z',a')=(V+a^{1/2}V',Z+aZ'+\tfrac{1}{2}a^{1/2}[V,V'],aa').\]
Fix a vector $H$ in $\a$ with the property that $\exp(tH)=e^t$ for all
$t\in\R$, and extend the inner product of $\n$ to $\mathfrak{s}=\n+\a$ by
demanding that $\n$ and $\a$ be orthogonal in $\s$ and that $H$ be a unit
vector. Regarded as a manifold with the natural left invariant Riemannian
metric, $S$ is  what has come to be known as a \emph{Damek--Ricci space}, due
to its prominent appearance in \cite{Damek-Ricci}. Specifically, the Lie
bracket on $\s$ is given by
\[[(V,X,a)(V',Z',a')]=(\frac{1}{2}aV'-\frac{1}{2}a'V,aZ'-a'Z+[V,V'],0);\]
the left-invariant metric is induced by
\[\brac{(V,Z,a)}{V',Z',a')} = \brac{V}{V'}+\brac{Z}{Z'}+aa',\]
and the associated left-invariant measure on $S$ is given by
$a^{-Q}\,dV\,dZ\,\frac{da}{a}$, where $Q=\frac{m_\v}{2}+m_\z$ is the
\emph{homogeneous dimension} of $N$. If we allow $N$ to be abelian, all
classical, noncompact Riemannian symmetric spaces of rank one are examples of
Damek--Ricci spaces.

The radial part of the Laplace--Beltrami operator on $S$ is given (in polar
geodesic coordinates) by
\begin{equation}\label{eq.radial-LB}
\mathcal{L}_r=\frac{\partial^2}{\partial
r^2}+\biggl(\frac{m_\v+m_\z}{2}\coth\frac{r}{2}+\frac{m_\z}{2}\tanh\frac{r}{2}\biggr)\frac{\partial}{\partial
r}.
\end{equation}
It was observed in \cite{Anker-Damek-Yacoub} (cf. Formula~(2.12), loc.cit.)
that $\mathcal{L}_r$ in fact coincides with the Jacobi operator
$\mathcal{L}_{\alpha,\beta}$, so any object that can be defined on $S$ by means
of spectral theory of $\mathcal{L}_r$ will have an analogue in Jacobi theory.
This might have motivated the definition of the spherical transform of a radial
function $f=f(r)$ on $S$ by
\[\widetilde{f}(\lambda) = \int_S \varphi_\lambda(x)f(x)\,dx = \frac{2^n\pi^{n/2}}{\Gamma(\tfrac{n}{2})}\int_0^\infty \sinh^{m_\v+m_\z}\bigl(\tfrac{r}{2}\bigr) \cosh^{m_\z}\bigl(\tfrac{r}{2}\bigr)\varphi_\lambda(r)f(r)\,dr.\]
Here $n=\dim~S=m_\v+m_\z+1$. The inversion formula for the spherical transform
is also familiar: If $f$ is radial and, say, in $C_c^\infty(S)$,
\[f(x)=\frac{2^{m_\z-2}\Gamma(\tfrac{n}{2})}{\pi^{\tfrac{n}{2}+1}}\int_0^\infty\varphi_\lambda(x)\widetilde{f}(\lambda)
\vert\mathbf{c}(\lambda)\vert^{-2}\,d\lambda,\quad\text{where
}\mathbf{c}(\lambda)=\frac{2^{Q-2i\lambda}\Gamma(\tfrac{n}{2})\Gamma(2i\lambda)}{\Gamma(i\lambda+\tfrac{Q}{2})
\Gamma(i\lambda+\tfrac{m_\v}{4}+\tfrac{1}{2})}.\] The spherical transform
extends uniquely to an isometry from the space $L^2(S)^\sharp$ of
square-integrable radial functions on $S$ onto
$L^2(\R_+,\vert\mathbf{c}(\lambda)\vert^{-2}d\lambda)$.

The strong type $(p,p)$-part of Theorem \ref{thm.hormander.multiplier} was
already established in \cite{Anker-Damek-Yacoub}, and while the experts surely
knew the weak type $(1,1)$ result, it did not appear in
\cite{Anker-Damek-Yacoub} nor elsewhere, as far as we know.

\subsection{Root Systems of Type $BC$}
Let $\a$ denote an $r$-dimensional real vector space with inner product
$(\cdot,\cdot)$, fix a root system $\Delta\subset\a^*$ together with a choice
of positive system $\Delta^+\subset\Delta$. Associate to $\lambda\in\a^*$ the
element $h_\lambda\in\a$ satisfying $\lambda(H)=(H,h_\lambda)$ for all
$H\in\a$, and define an inner product on $\a^*$ by
$(\lambda,\mu)=(h_\lambda,h_\mu)$. Set
$\a^+=\{H\in\a\,\vert\,\forall\alpha\in\Delta:\alpha(H)>0\}$. If $\lambda\neq
0$, we let $H_\lambda=\frac{2}{(\lambda,\lambda)}h_\lambda$ and observe that
$\alpha(H_\alpha)=2$. Let $r_\alpha(H)=H-\alpha(H)H_\alpha$ be the usual root
reflection and $W=\langle r_\alpha\rangle$ the associated Weyl group; it acts
on $\a^*$ and $\a_\C^*$ by $w\lambda(H)=\lambda(w^{-1}H)$.  A
\emph{multiplicity function} is any $W$-invariant function $m:\Delta\to\C$,
usually assumed to be $\R_+$-valued. Let $m_\alpha=m(\alpha)$ for notational
convenience.

We introduce the $r$-dimensional torus $A_\C=\a_\C/\Z\{i\pi
H_\alpha\,\vert\,\alpha\in\Delta\}$ along with the projection $\exp:\a_\C\to
A_\C$. Note that $A_\C=AT$, where $A=\a$ and $T=i\a/\Z\{i\pi
H_\alpha\,\vert\,\alpha\in\Delta\}$ is compact. Let
\[\begin{split}
A_\C^{\text{reg}}&=\exp\{H\in\a_\C\,\vert\,\forall\alpha\in\Delta:\alpha(H)\neq 0\}\\
A^{\text{reg}}&=A\cap A_\C^{\text{reg}} = \exp\{H\in\a\,\vert\,\forall\alpha\in\Delta:\alpha(H)\neq 0\}\\
A^+&=\exp\a^+\subset A^{\text{reg}}.
\end{split}\]

To $H\in\a$ one associates the directional derivative $\partial(H)$ defined by
$\partial(H)f(a)=\partial_tf(a\exp(tH))\vert_{t=0}$. Let $\{H_1,\ldots,H_r\}$
be any orthonormal basis of $\a$. The \emph{Heckman--Opdam Laplacian}
associated to $(\a,\Delta,m)$ is the $W$-invariant differential operator
\[L(m)=\underbrace{\sum_{j=1}^r\partial(H_j)^2}_{L_A} + \sum_{\alpha\in\Delta^+}m_\alpha\frac{1+e^{-2\alpha}}{1-e^{-2\alpha}}\partial(h_\alpha),\]
acting, say, on $C^\infty(\a)$. The operator $L_A$ is the usual Laplace
operator on $A$.

In the rank one situation, to which we now specialize, $\Delta^+=\{2,4\}$ with
root multiplicities $k_1=k(2)$ and $k_2=k(4)$, respectively. Set $\rho=k_1+2k_2$.
According to \cite[p.~89f]{Opdam}, the hypergeometric functions (the
construction of which is explained, for example, in \cite{Gestur_Henrik-SBT})
are then expressed by
\[F(\lambda,k,t):={_2}F_1\Bigl(\frac{\lambda+\rho}{2},\frac{-\lambda+\rho}{2},k_1+k_2+\frac{1}{2},-\sinh^2t\Bigr).\]
These are special types of Jacobi functions; with $\alpha=k_1+k_2-\frac{1}{2}$,
and $\beta=k_2-\frac{1}{2}$, one observes that
$F(i\lambda,k;t)=\varphi_\lambda^{(\alpha,\beta)}(t)$. The ideal situation
where $\alpha>\frac{1}{2}$, $\alpha>\beta>-\frac{1}{2}$
thus amounts to the requirement that $k_2>0$ and $k_1>1- k_2$.

\begin{figure}[h]
\centering
\includegraphics{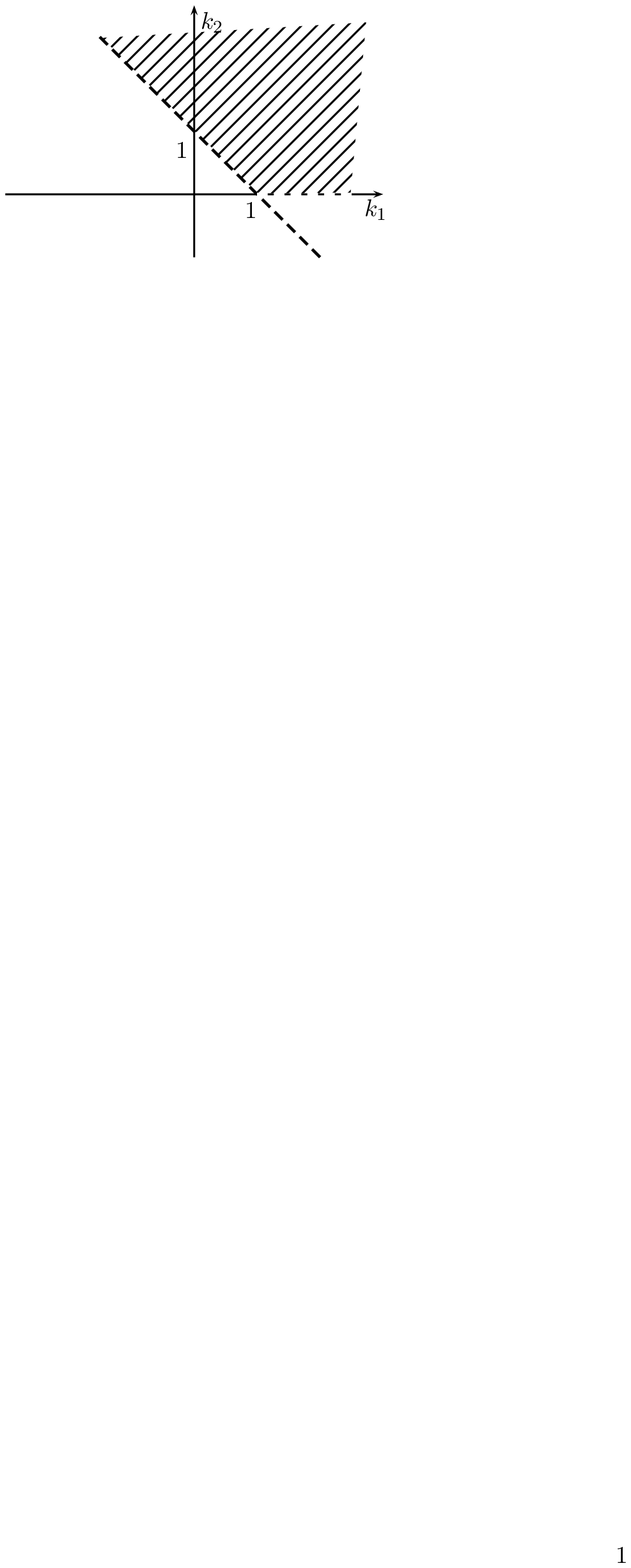}
\caption{Admissible range of $k_1$ and $k_2$.}
\end{figure}

The group $A$ is simply $\R$, and Weyl group invariance
of a function on $\R$ thus simply means that the function be even. The
$W$-invariant measures used by Opdam are then precisely our measures $d\mu$ and
$d\nu$, and the Heckman--Opdam transform -- where one integrates a function
against a hypergeometric function -- is simply the Jacobi transform for said
choice of parameters $\alpha,\beta$. Accordingly, the multiplier theorem is a
statement about $L^p$-multipliers for the ``hypergeometric Fourier transform'
associated to a rank one root system with complex multiplicity function. This
interpretation is amusing, at the least.

\appendix

\section{Asymptotic Analysis of Jacobi Functions}\label{app.asymptotic}
We presently remove the restriction on $\alpha,\beta$ that they be real, as the asymptotic analysis of Jacobi functions with complex parameters will be used elsewhere. Moreover, while the proofs from \cite{Stanton-Tomas} \emph{do} generalize fairly easily to the case of arbitrary \emph{real} parameters $\alpha,\beta$, it seems useful to still write out the details.

\begin{theorem}
Assume $\Re\alpha>\frac{1}{2}$, $\Re\alpha >\Re\beta>-\frac{1}{2}$, and that $\lambda$ belongs either to a compact subset of
$\C\setminus (-i\N)$ or a set of the form
\[D_{\varepsilon,\gamma}=\{\lambda\in\C\,:\, \gamma\geq\Im\lambda\geq -\varepsilon\vert\Re\lambda\vert\}\]
for some $\varepsilon,\gamma\geq 0$. There exist constants $R_0, R_1\in
(1,\sqrt{\frac{\pi}{2}})$ with $R_0^2<R_1$ such that for every $M\in\N$ and
every $t\in[0,R_0]$
\begin{eqnarray}
\label{A.eqn.expansion.full}\varphi_\lambda^{(\alpha,\beta)}(t) =
\frac{2\Gamma(\alpha+1)}{\Gamma(\alpha+\tfrac{1}{2})\Gamma(\tfrac{1}{2})}\frac{t^{\alpha+\frac{1}{2}}}{\Delta'(t)} \sum_{m=0}^\infty
a_m(t)t^{2m}\mathcal{J}_{m+\alpha}(\lambda t)
\\
\varphi_\lambda^{(\alpha,\beta)}(t) =
\frac{2\Gamma(\alpha+1)}{\Gamma(\alpha+\tfrac{1}{2})\Gamma(\tfrac{1}{2})}\frac{t^{\alpha+\frac{1}{2}}}{\Delta'(t)} \sum_{m=0}^M
a_m(t)t^{2m}\mathcal{J}_{m+\alpha}(\lambda t)+E_{M+1}(\lambda t),
\end{eqnarray}
where
\begin{equation}\label{A.thm.asymptoticEXP.a.ests}
a_0(t)\equiv 1\text{ and } \vert a_m(t)\vert\leq c_\alpha(t)R_1^{-(\Re\alpha+m-\frac{1}{2})}\text{ for all } m\in\N.
\end{equation}
Additionally, the error term $E_{M+1}$ is bounded as follows:
\begin{equation}\label{A.thm.asymptoticEXP.error}
\vert E_{M+1}(\lambda t)\vert \leq \begin{cases} c_Mt^{2(M+1)}&\text{if }\vert\lambda t\vert\leq 1\\
c_M t^{2(M+1)}\vert\lambda t\vert^{-(\Re\alpha+M+1)}&\text{if
}\vert\lambda t\vert>1.\end{cases}
\end{equation}
\end{theorem}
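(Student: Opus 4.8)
The plan is to read off the expansion directly from the integral representation \eqref{eqn.int.formula}, matching it against the Poisson representation of the modified Bessel functions. The starting observation is the elementary identity
\[\mathcal{J}_{m+\alpha}(z)=\int_0^1(1-u^2)^{m+\alpha-\frac{1}{2}}\cos(zu)\,du,\]
valid since $\Re\alpha>-\frac{1}{2}$; it follows from the definition of $\mathcal{J}_\mu$ together with Poisson's integral for $J_\mu$. After the substitution $s=tu$ this becomes $t^{2m+2\alpha}\mathcal{J}_{m+\alpha}(\lambda t)=\int_0^t(t^2-s^2)^{m+\alpha-\frac{1}{2}}\cos(\lambda s)\,ds$, which is precisely the shape of the terms produced by \eqref{eqn.int.formula}. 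The whole theorem therefore reduces to expanding the kernel $A_{\alpha,\beta}(s,t)$ in powers of $w:=t^2-s^2$.

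To do so I would isolate the one non-smooth factor of $A_{\alpha,\beta}$. Writing $\cosh t-\cosh s=\frac{1}{2}(t^2-s^2)\,h(s,t)$ with $h(s,t)=\frac{2(\cosh t-\cosh s)}{t^2-s^2}$ analytic and non-vanishing on the relevant range (one has $h(t,t)=\sinh t/t>0$), I obtain $(\cosh t-\cosh s)^{\alpha-\frac{1}{2}}=(t^2-s^2)^{\alpha-\frac{1}{2}}(h/2)^{\alpha-\frac{1}{2}}$. Since $\cosh s$ is an even entire function of $s$, the remaining part of $A_{\alpha,\beta}$ — the factor $(h/2)^{\alpha-\frac{1}{2}}$, the hypergeometric factor ${}_2F_1(\frac{1}{2}+\beta,\frac{1}{2}-\beta;\alpha+\frac{1}{2};\frac{\cosh t-\cosh s}{2\cosh t})$, and the elementary $t$-dependent prefactors — is an analytic function $G(w,t)$ of $w$; for $t\le R_0<\sqrt{\pi/2}$ and $0\le s\le t$ the hypergeometric argument lies in $[0,\frac{1}{2})$, so $G(\cdot,t)$ is analytic on a disc of radius $R_1$. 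Collecting constants gives
\[\frac{2A_{\alpha,\beta}(s,t)}{\Delta_{\alpha,\beta}(t)}=\frac{2\Gamma(\alpha+1)}{\Gamma(\alpha+\frac{1}{2})\Gamma(\frac{1}{2})}\frac{t^{\frac{1}{2}-\alpha}}{\Delta'(t)}\,(t^2-s^2)^{\alpha-\frac{1}{2}}\,G(w,t),\]
and the Taylor coefficients of $G(\cdot,t)$ are the $a_m(t)$. A direct evaluation of $G(0,t)$, using $\sinh 2t=2\sinh t\cosh t$ and the definition of $\Delta'$, collapses all $t$-factors and normalises $a_0\equiv 1$. Inserting the Taylor series of $G$ into the integral and applying the Poisson identity term by term yields both \eqref{A.eqn.expansion.full} and its truncated form.

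The coefficient bound \eqref{A.thm.asymptoticEXP.a.ests} would then be a Cauchy estimate for $G(\cdot,t)$ on $|w|=R_1$, the radius $R_1$ being fixed (as in the real-parameter case) by the nearest complex zero of $\cosh t-\cosh s$ and by the point where the hypergeometric argument reaches $1$; the exponent $\Re\alpha+m-\frac{1}{2}$ records the growth of the branch factor $(h/2)^{\alpha-\frac{1}{2}}$ on that circle. Truncating after $m=M$ leaves a remainder with $|R_{M+1}(w,t)|\lesssim |w|^{M+1}$ on $[0,R_0^2]$, so that $E_{M+1}(\lambda t)$ equals the corresponding integral of $(t^2-s^2)^{\alpha-\frac{1}{2}}R_{M+1}\cos(\lambda s)$. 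For $|\lambda t|\le 1$ one bounds $|\cos(\lambda s)|$ by a constant and integrates $(t^2-s^2)^{M+\alpha+\frac{1}{2}}$ explicitly, producing the factor $t^{2(M+1)}$; for $|\lambda t|>1$ the decay $|\lambda t|^{-(\Re\alpha+M+1)}$ comes from the algebraic endpoint singularity at $s=t$, repeated integration by parts trading each order of smoothness for a power of $(\lambda t)^{-1}$ (equivalently, re-expanding $R_{M+1}$ and invoking the large-argument asymptotics of $\mathcal{J}_\mu$). The case of $\lambda$ in a compact subset of $\C\setminus(-i\N)$ is immediate, since every quantity above is entire in $\lambda$ and hence uniformly bounded there.

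The step I expect to be the main obstacle is controlling this oscillatory integral for complex $\lambda$ with $\Im\lambda<0$: then $|\cos(\lambda s)|$ grows like $e^{|\Im\lambda|\,s}$, and a crude modulus bound would destroy the purely polynomial decay. This is exactly where the geometry of $D_{\varepsilon,\gamma}$ enters — the constraints $\Im\lambda\le\gamma$ and $\Im\lambda\ge -\varepsilon|\Re\lambda|$ confine $\lambda t$ to a fixed sector about the real axis while keeping $t\le R_0$ bounded, so the exponential factor is absorbed into constants $c_M=c_M(\varepsilon,\gamma)$ and the oscillation still supplies the algebraic gain. Making this cancellation quantitative, uniformly for $t\in[0,R_0]$, is the delicate point, and it is the reason the bound on $\Im\lambda$ built into $D_{\varepsilon,\gamma}$ cannot be removed.
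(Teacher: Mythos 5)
Your proposal follows essentially the same route as the paper's proof in Appendix \ref{app.asymptotic}: both start from the integral representation \eqref{eqn.int.formula}, convert $\int_0^t(t^2-s^2)^{m+\alpha-\frac{1}{2}}\cos(\lambda s)\,ds$ into $t^{2(m+\alpha)}\mathcal{J}_{m+\alpha}(\lambda t)$ via the Poisson integral, expand the kernel in powers of $t^2-s^2$, and treat the error by splitting at $\vert\lambda t\vert=1$ with Szeg\H{o}-style integration by parts supplying the oscillatory decay. The only difference is organizational --- you package the kernel as a single analytic function $G(w,t)$ of $w=t^2-s^2$ and invoke Cauchy estimates, whereas the paper multiplies out the ${}_2F_1$ series against the Stanton--Tomas expansion of $\bigl(\frac{2\cosh t-2\cosh s}{t^2-s^2}\bigr)^z$ and justifies the rearrangement of the resulting double series explicitly --- and you correctly identify the genuinely delicate points (uniformity of the coefficient bounds on the circle $\vert w\vert=R_1$ and the treatment of complex $\lambda\in D_{\varepsilon,\gamma}$ in the regime $\vert\lambda t\vert>1$), which is exactly where the paper expends its effort.
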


\begin{proof}
In order to expand the hypergeometric function that appears in
\eqref{eqn.int.formula} in a power series, it is helpful to start with the
series expansion of the functions $s\mapsto (\frac{\cosh t-\cosh
s}{t^2-s^2})^{d+j}$, $j\in\N_0$. The analysis is carried out on page 255 in
\cite{Stanton-Tomas} and has nothing to do with Jacobi functions, so we shall
be brief. First note that
\begin{equation}\label{eqn.F.expansion}
{_2}F_1\Bigl(\frac{1}{2}+\beta,\frac{1}{2}-\beta;\alpha+\frac{1}{2};\frac{\cosh
t-\cosh s}{2\cosh t}\Bigr) = \sum_{j=0}^\infty d_j\Bigl(\frac{\cosh t-\cosh
s}{2\cosh t}\Bigr)^j
\end{equation}
whenever $\vert\cosh t-\cosh s\vert< 2\vert\cosh t\vert$, where
\[d_j=\frac{\Gamma\bigl(\alpha+\frac{1}{2}\bigr)}{\Gamma(\frac{1}{2}+\beta)\Gamma(\frac{1}{2}-\beta)}
\frac{\Gamma(\frac{1}{2}+\beta+j)\Gamma\bigl(\frac{1}{2}-\beta+j\bigr)}
{\Gamma(\alpha+\frac{1}{2}+j)\Gamma(j+1)}.\] Observe that $\vert d_j\vert$ is
comparable to $j^{-(\Re\alpha+\frac{1}{2})}$, according to either one of
the classical estimates
\[\Gamma(z)=\Bigl(\frac{2\pi}{z}\Bigr)^{\frac{1}{2}} \Bigl(\frac{z}{e}\Bigr)^z\bigl(1+O(\vert z\vert^{-1})\bigr)\text{ or }
\frac{\Gamma(z+\alpha)}{\Gamma(z+\beta)}=z^{\alpha-\beta}(1+O(z^{-1}))\]
implying that $(d_j)$ is an absolutely summable sequence whenever $\Re\alpha>\frac{1}{2}$.

We recall from \cite[Proposition~2.3]{Stanton-Tomas} that
\begin{equation}\label{eq.cosh-expand}
\biggl(\frac{2\cosh t-2\cosh s}{t^2-s^2}\biggr)^z=\Bigl(\frac{\sinh
t}{t}\Bigr)^z\sum_{k=0}^\infty a_k(t,z)(t^2-s^2)^k,\quad z\in\C
\end{equation}
for suitable functions $a_k(\cdot,\cdot)$ (the convergence being uniform for
$\vert t^2-s^2\vert<3\pi^2$ and $\vert t\vert <\pi$), and that there exists a
number $R_1\in (1,\sqrt{\tfrac{\pi}{2}})$ such that for all $z\in\C$ with
$\Re z>0$ and $t\in[-\sqrt{R_1},\sqrt{R_1}]$, one has
\begin{equation}\label{eqn.ST.2.11}
\biggl|\biggl(\frac{\sinh t}{t}\biggr)^z a_k(t,z)\biggr|\leq\biggl(\frac{4\cosh
t}{R_1}\biggr)^{\Re z}R_1^{-k}.
\end{equation}
This estimate is proved in \cite{Stanton-Tomas} for $x>0$ but the proof
trivially goes through for complex numbers $z$ as well. Choose a positive
number $R_0$ with $1<R_0<\sqrt{R_1}$. Since $\vert\frac{\cosh t-\cosh s}{2\cosh
t}\vert<\frac{1}{2}$ for $t\in[0,R_0]$, $s\leq t< R_0<\sqrt{\pi}$, the series
\eqref{eqn.F.expansion} converges uniformly in this domain. It thus follows
from \eqref{eqn.int.formula} that
\[\begin{split}
&\frac{\Gamma(\alpha+\tfrac{1}{2})\Gamma(\tfrac{1}{2})}{2^{3\alpha+2\beta+\frac{3}{2}}\Gamma(\alpha+1)} \frac{\Delta(t)}{\sinh(2t)(\cosh t)^{\beta-\frac{1}{2}}}\varphi_\lambda(t) \\
=&\frac{\Gamma(\alpha+\tfrac{1}{2})\Gamma(\tfrac{1}{2})}{2^{\frac{1}{2}+\alpha}\Gamma(\alpha+1)} (\sinh t)^{2\alpha}(\cosh t)^{\beta+\frac{1}{2}}\varphi_\lambda(t)\\
=& \int_0^t\cos(\lambda s)(\cosh t-\cosh s)^{\alpha-\frac{1}{2}}{_2}F_1\Bigl(\tfrac{1}{2}+\beta,\tfrac{1}{2}-\beta;\alpha+\tfrac{1}{2};\tfrac{\cosh t-\cosh s}{2\cosh t}\Bigr)\,ds\\
=&\int_0^t\cos(\lambda s)(\cosh t-\cosh s)^{\alpha-\frac{1}{2}}\Bigl\{\sum_{j=0}^\infty d_j\bigl(\tfrac{\cosh t-\cosh s}{2\cosh t}\bigr)^j\Bigr\}\,ds\\
=&2^{\frac{1}{2}-\alpha}\sum_{j=0}d_j(4\cosh t)^{-j}\int_0^t\cos(\lambda s)(2\cosh t-2\cosh s)^{\alpha+j-\frac{1}{2}}\,ds
\end{split}\]
which can be rewritten by means of \eqref{eq.cosh-expand} as
\begin{multline*}
2^{\frac{1}{2}-\alpha}\sum_{j=0}^\infty d_j(4\cosh t)^{-j}\int_0^t\cos(\lambda s)(t^2-s^2)^{\alpha+j-\frac{1}{2}}\Bigl(\frac{\sinh t}{t}\Bigr)^{\alpha+j-\frac{1}{2}}
\Bigl\{\sum_{k=0}^\infty a_k\bigl(t,\alpha+j-\tfrac{1}{2}\bigr)(t^2-s^2)^k\Bigr\}\,ds\\
=2^{\frac{1}{2}-\alpha}\sum_{j=0}^\infty\sum_{k=0}^\infty d_j(4\cosh t)^{-j}\Bigl(\frac{\sinh t}{t}\Bigr)^{\alpha+j-\frac{1}{2}} a_k\bigl(t,\alpha+j-\tfrac{1}{2}\bigr) \int_0^t\cos(\lambda s)(t^2-s^2)^{j+k+\alpha-\frac{1}{2}}\,ds.
\end{multline*}

In order to compute the latter integral, we first note that the Bessel function
$J_{j+k+\alpha}$ is well-defined since $\Re(j+k+\alpha)>-\frac{1}{2}$. The
integral representation
\begin{equation}\label{egn.Bessel.intrep}
J_\mu(z)=\frac{(z/2)^\mu}{\Gamma(\mu+\frac{1}{2})\Gamma(\frac{1}{2})}\int_{-1}^1e^{izs}(1-s^2)^{\mu-\frac{1}{2}}\,ds
\end{equation}
yielding an absolutely convergent integral whenever $\Re\mu>-\frac{1}{2}$, may thus be employed. Indeed, Bessel functions of arbitrary
real exponent $k>-\frac{1}{2}$ are investigated in \cite[Chapter~IV,
Section~3]{SteinWeiss}, and they are defined for complex parameters through
analytic continuation, as described, for example, in \cite[Chapter~VI]{Watson}.
Notice that
\begin{equation}\label{eqn.standard.Bessel-estimate}
\vert J_\mu(z)\vert\leq\Bigl|\frac{z}{2}\Bigr|^{\Re\mu}\frac{e^{\vert\Im z\vert}}{\vert\Gamma(\mu+1)\vert}.
\end{equation}

Additionally, it follows that
\[\begin{split}
\int_0^t\cos(\lambda s)(t^2-s^2)^{j+k+\alpha-\frac{1}{2}}\,ds
&=t\int_0^1\cos(\lambda tr)(t^2-t^2r^2)^{j+k+\alpha-\frac{1}{2}}\,dr\\
&=t^{2(j+k+\alpha)}\int^1_0\cos(\lambda tr)(1-r^2)^{j+k+\alpha-\frac{1}{2}}\,dr\\
&=t^{2(j+k+\alpha)}\frac{\Gamma\bigl(j+k+\alpha+\frac{1}{2}\bigr)\Gamma\bigl(\frac{1}{2}\bigr)} {2}\frac{J_{j+k+\alpha}(\lambda t)}{\bigl(\frac{\lambda t}{2}\bigr)^{j+k+\alpha}}\\
&=t^{2(j+k+\alpha)}\mathcal{J}_{j+k+\alpha}(\lambda t)
\end{split}\]
where $\mathcal{J}_\mu(z)$ is defined by
$\mathcal{J}_\mu(z)=2^{\mu-1}\Gamma\bigl(\frac{1}{2}\bigr)\Gamma\bigl(\mu+\frac{1}{2}\bigr)z^{-\mu}J_\mu(z)$.
Upon close inspection (and a formal rearrangement of the two series which we
justify below), it is thus seen that
\begin{multline}\label{eqn.double-sum} 
\sum_{j=0}^\infty\sum_{k=0}^\infty d_j(4\cosh t)^{-j}\Bigl(\frac{\sinh t}{t}\Bigr)^{\alpha+j-\frac{1}{2}} a_k\bigl(t,\alpha+j-\tfrac{1}{2}\bigr) \int_0^t\cos(\lambda s)(t^2-s^2)^{j+k+\alpha-\frac{1}{2}}\,ds\\
\begin{split}
&=\sum_{j=0}^\infty\sum_{k=0}^\infty d_j(4\cosh t)^{-j}\Bigl(\frac{\sinh t}{t}\Bigr)^{\alpha+j-\frac{1}{2}}t^{2\alpha}t^{2(j+k)}\mathcal{J}_{\alpha+j+k}(\lambda t)\\
&=(\sinh t)^{\alpha-\frac{1}{2}}t^{\alpha+\frac{1}{2}}\sum_{j=0}^\infty\sum_{k=0}^\infty d_j(4\cosh t)^{-j}\Bigl(\frac{\sinh t}{t}\Bigr)^{j-\frac{1}{2}}t^{2(j+k)}\mathcal{J}_{\alpha+j+k}(\lambda t)\\
&=(\sinh t)^{\alpha-\frac{1}{2}}t^{\alpha+\frac{1}{2}}\sum_{m=0}^\infty a_m(t)t^{2m}\mathcal{J}_{\alpha+m}(\lambda t),\end{split}
\end{multline}
with
\[a_m(t)=\sum_{j=0}^m d_j(4\cosh t)^{-j}\Bigl(\frac{\sinh t}{t}\Bigr)^j a_{m-j}\bigl(t,\alpha+j-\tfrac{1}{2}\bigr).\] In other words,
\begin{equation}\label{eqn.expansion}
\varphi_\lambda^{(\alpha,\beta)}(t) =
\frac{2\Gamma(\alpha+1)}{\Gamma(\alpha+\tfrac{1}{2})\Gamma(\tfrac{1}{2})}\frac{t^{\alpha+\frac{1}{2}}}{(\sinh
t)^{\alpha+\frac{1}{2}}(\cosh t)^{\beta+\frac{1}{2}}} \sum_{m=0}^\infty
a_m(t)t^{2m}\mathcal{J}_{m+\alpha}(\lambda t),
\end{equation}
which is the Jacobi function analogue of
\cite[Formula~(2.4)]{Stanton-Tomas}, thus demonstrating that Jacobi
functions generally behave like Bessel functions close to $0$,
akin to spherical functions on rank one symmetric spaces.

In order to justify the formal rearrangement of the double series in
\eqref{eqn.double-sum}, we presently prove that it is absolutely convergent. To
this end we first notice that if $\vert\lambda t\vert \leq 1$, and $\lambda$
belongs to a set of the form $D_{\varepsilon,\gamma}$, we use the integral
formula \eqref{egn.Bessel.intrep} to write
\[\mathcal{J}_\mu(\lambda t)=\frac{1}{2}\int_{-1}^1e^{i(\lambda
t)s}(1-s^2)^{\mu-\frac{1}{2}}\,ds =
\frac{1}{2}\int_{-1}^1e^{i(\Re\lambda)ts-(\Im\lambda)ts}(1-s^2)^{\mu-\frac{1}{2}}\,ds, \] implying that
\begin{equation}\label{eqn.modifiedBessel.less1}
\vert\mathcal{J}_\mu(\lambda t)\vert\leq\frac{1}{2}e^{\varepsilon\vert\Re\lambda\vert t}\int_{-1}^1(1-s^2)^{\Re\mu-\frac{1}{2}}\,ds \leq
e^{\varepsilon\vert\lambda t\vert}\leq e^\varepsilon\text{ for }\vert\lambda
t\vert\leq 1.
\end{equation}
An even simpler bound is available if $\lambda$ belongs to a compact subset of
$\C\setminus(-i\N)$, so we shall presently ignore this possibility. An
analogous bound on $\vert\mathcal{J}_\mu(\lambda t)\vert$ for $\vert\lambda
t\vert\geq 1$ is obtained by observing that, still for $\vert\lambda t\vert\geq
1$ and $\lambda\in D_{\varepsilon,\gamma}$, one has
$\vert\mathcal{J}_\mu(\lambda t)\vert\leq\frac{1}{2}e^{\vert\Im
\lambda\vert t}\int_{-1}^1(1-s^2)^{\Re\mu-\frac{1}{2}}\,ds\leq e^{\gamma
R_0}$, which, while being a poor bound, is independent of $\mu$.
\medskip

A summand in \eqref{eqn.double-sum} is therefore bounded,
according to \eqref{eqn.ST.2.11} and
\eqref{eqn.modifiedBessel.less1}, by
\[\begin{split}
&c_\alpha\vert d_j\vert \vert 4\cosh t\vert^{-j}\biggl|\Bigl(\frac{\sinh t}{t}\Bigr)^{\alpha+j-\frac{1}{2}}a_k(t,\alpha+j-\tfrac{1}{2})\biggr| \vert t\vert^{2(j+k)}\\
\leq & c_\alpha\vert d_j\vert \vert 4\cosh t\vert^{-j}\Bigl(\frac{4\cosh t}{R_1}\Bigr)^{\Re\alpha+j-\frac{1}{2}}R_1^{-k}\\
=&c_\alpha\vert d_j\vert \vert 4\cosh t\vert^{\Re\alpha-\frac{1}{2}}\Bigl(\frac{t^2}{R_1}\Bigr)^{j+k} \end{split}\] since
$R_1>1$ and $\Re\alpha>-\frac{1}{2}$ by assumption. In addition,
$\frac{t^2}{R_1}\leq \frac{R_0^2}{R_1}<1$, and the  sequence $(d_j)$ has
already been shown to be bounded, so $\sum_j\sum_k\vert d_j\vert
(t^2R_1^{-1})^{j+k}$ converges. The sequence $(a_m)$  is therefore absolutely
summable and the rearrangement of the double series permissible.
\medskip

We can now prove the upper estimate \eqref{A.thm.asymptoticEXP.a.ests} for
$a_m(t)$: First note that $a_0(t)\equiv 1$ and
\[\begin{split}
\vert a_m(t)\vert &\leq c_\alpha\sum_{j=0}^m\vert d_j\vert \vert 4\cosh t\vert^{-j}\biggl|\Bigl(\frac{\sinh t}{t}\Bigr)^j a_{m-j}(t,\alpha+j-\tfrac{1}{2})\biggr|\\
&\leq c_\alpha\sum_{j=0}^m\vert d_j\vert \vert 4\cosh t\vert^{-j}\Bigl|\frac{4\cosh t}{R_1}\Bigr|^{\Re\alpha+j-\frac{1}{2}} R_1^{-(m-j)}\Bigl|\frac{\sinh t}{t}\Bigr|^{-(\Re\alpha-\frac{1}{2})}\\
&=c_\alpha'R_1^{-(\Re\alpha+m-\frac{1}{2})}\Bigl|\frac{t\cosh t}{\sinh t}\Bigr|^
{\Re\alpha-\frac{1}{2}}\sum_{j=0}^m\vert d_j\vert.
\end{split}\]
Since the series $\sum_j j^{-(\Re\alpha+\frac{1}{2})}$ is convergent,
one may estimate $\sum_{j=1}^m \vert d_j\vert$ with the convergent series
$\sum_{j=1}^\infty\vert d_j\vert$ (which certainly involves $\alpha$ and $\beta$ but not $R_1$, so it is immaterial exactly what this constant is).
As the function $t\mapsto\left|\frac{t\cosh
t}{\sinh t}\right|^{\Re\alpha-\frac{1}{2}}$ is bounded on $[0,R_0]$, we
have thus obtained the sought-after upper bound on $\vert a_m(t)\vert$.
\medskip

As for the error term analysis, it is insufficient to quote
\cite{Stanton-Tomas}, as we need to carry out the estimates for Bessel
functions with complex parameters and complex argument. The key idea, as
already used decisively in \cite{Szego}, it to employ integration by parts. It
can be shown by induction that for every nonzero integer $k\leq\Re\mu$,
there exists a degree $k$ polynomial $p_k$ with zeros in $t=\pm 1$ and a
constant $c$ such that
\[\int_{-1}^1e^{izs}(1-s^2)^{\mu-\frac{1}{2}}\,ds =
c\frac{(\mu-\frac{1}{2})^k}{(iz)^k}\int_{-1}^1
e^{izs}(1-s^2)^{\mu-\frac{1}{2}-k}p_k(s)\,ds.\] The point is that by choosing
$k$ large enough, we gain powers $\vert z\vert^{-k}=\vert\lambda t\vert^{-k}$,
which leads to a more favorable estimate of $\vert\mathcal{J}_\mu(\lambda t)\vert$
in the region where $\vert\lambda t\vert >1$. More precisely, the integral
formula for the Bessel function $J_\mu$ (valid whenever $\Re\mu>-\frac{1}{2}$)
\[\Gamma(\mu+\tfrac{1}{2})J_\mu(z)=\frac{1}{\sqrt{\pi}}\Bigl(\frac{z}{2}\Bigr)^\mu\int_{-1}^1e^{izt} (1-t^2)^{\mu-\frac{1}{2}}\,dt\]
implies that $\vert J_\mu(z)\vert\leq e^{-\vert\Im z\vert}\int_{-1}^1(1-s^2)^{\Re\mu-\frac{1}{2}}\,ds$. As
\[\int_{-1}^1(1-s^2)^{\mu-\frac{1}{2}-k}ds=\frac{\Gamma(\tfrac{1}{2})\Gamma(\mu+\tfrac{1}{2}-k)}
{\Gamma(\mu+1-k)}\] it thus follows from the definition of
$\mathcal{J}_\mu(z)$ that
\begin{equation}\label{eqn.est.Bessel.Szego}
\vert\mathcal{J}_\mu(z)\vert\lesssim
\frac{\vert\mu-\frac{1}{2}\vert^ke^{-\vert\Im z\vert}}{\vert
z\vert^k}\biggl|\frac{\Gamma(\tfrac{1}{2})\Gamma(\mu+\tfrac{1}{2}-k)}{\Gamma(\mu+1-k)}\biggr|
\end{equation}
for $k\in\N_0\cap[0,\Re\mu]$.
For the error term analysis, we thus take $E_{M+1}(\lambda t)$ to be
\[E_{M+1}(\lambda t) = \frac{2^{\alpha-\frac{1}{2}}\Gamma(\alpha+1)}{\Gamma(\alpha+\frac{1}{2})\Gamma(\frac{1}{2})} \frac{t^{\Re\alpha+\frac{1}{2}}}{\Delta'(t)}
\sum_{m=M+1}^\infty a_m(t)t^{2m}\mathcal{J}_{m+\alpha}(\lambda t).\] When
$\vert\lambda t\vert\leq 1$, the error term $E_{M+1}(\lambda t)$ is bounded by
\[\begin{split}
& c\biggl|\frac{t^{\Re\alpha+\frac{1}{2}}}{\Delta'(t)}\biggr|\sum_{m=M+1}^\infty\vert a_m(t)\vert t^{2m}\vert\mathcal{J}_{m+\alpha}(\lambda t)\vert\\
\lesssim & c\biggl|\frac{t^{\Re\alpha+\frac{1}{2}}}{\Delta'(t)}\biggr|\sum_{m=M+1}^\infty\Bigl|\frac{t\cosh t}{\sinh t}\Bigr|^{\Re\alpha-\frac{1}{2}} R_1^{-(\Re\alpha+m-\frac{1}{2})} t^{2m}\\
\lesssim & c'\underbrace{\frac{t^{2\Re\alpha}(\cosh t)^{\Re\alpha-\Re\beta-1}}{(\sinh t)^{2\Re\alpha}}}_\sharp \sum_{m=M+1}^\infty R_1^{-m}t^{2m}\\
\lesssim & c\Bigl(\frac{t^2}{R_1}\Bigr)^{M+1}\sum_{j=0}^\infty\Bigl(\frac{t^2}{R_1}\Bigr)^j \leq c\Bigl(\frac{t^2}{R_1}\Bigr)^{M+1}\sum_{j=0}^\infty \Bigl(\frac{R_0^2}{R_1}\Bigr)^j\\
\lesssim & c\Bigl(\frac{t^2}{R_1}\Bigr)^{M+1}< ct^{2(M+1)},
\end{split}\]
since the factor $(\sharp)$, as a function in $t$, is bounded on $[0,R_0]$. Observe that the constant $c$ is obtained in such a way that the decay in $t$
determined by $\alpha$ is being accounted for.

It makes sense to try and estimate $E_{M+1}(\lambda t)$ differently whenever
$\vert\lambda t\vert\geq 1$, as we might be able to introduce a certain amount
of decay. Indeed, this possibility was already observed and used in
\cite{Stanton-Tomas} and \cite{Schindler}. To this end we use the asymptotic
expansion of Bessel functions with complex parameter and complex argument, as
found in \cite[page~199, Formula~1]{Watson}, to write
\[\vert J_{m+\alpha}(\lambda t)\vert\lesssim \frac{1}{\vert\lambda t\vert^{\frac{1}{2}}}\Bigl|\cos\left(\lambda t-\frac{\pi(m+\alpha)}{2}-\frac{\pi}{4}\right)\Bigr|.\]
Since both $\lambda$ and $\alpha$ are allowed to be complex, we cannot simply
estimate the cosine with $1$ (as was done in the proof of
\cite[Theorem~2.1]{Stanton-Tomas}), and since the parameter $m+\alpha$ varies
with $m$, we could potentially end up with an upper bound on $\vert
J_{m+\alpha}(\lambda t)\vert$ that would get worse with increasing $m$. This is
not so, however: In the expression $\cos\bigl(\lambda
t-\frac{\pi(m+\alpha)}{2}\bigr)= \cos(\lambda
t)\cos(\frac{\pi(m+\alpha)}{2})+\sin(\lambda t)\sin(\frac{\pi(m+\alpha)}{2})$
we estimate $\cos(\frac{\pi(m+\alpha)}{2})=\cos(\frac{\pi
m}{2})\cos(\frac{\pi\alpha}{2})- \sin(\frac{\pi
m}{2})\sin(\frac{\pi\alpha}{2})$ by the constant $\sinh(\frac{\pi}{2}\Im\alpha)+\cosh(\frac{\pi}{2}\Im\alpha)$. Furthermore $\vert\cos(\lambda
t)\vert=\cosh((\Im\lambda)t)\leq \cosh(\gamma R_0)$ according to our
standing assumption on $\lambda$, so we may bound the modified Bessel function
in the first term in the series expression for $E_{M+1}(\lambda t)$ as follows:
\begin{equation}\label{eqn.improvedM}
\vert\mathcal{J}_{\alpha+M+1}(\lambda t)\vert \leq
c_{\alpha,M}\frac{2^{\Re\alpha+M}\sqrt{\pi}\vert\Gamma(\alpha+M+\frac{3}{2})\vert}{\vert\lambda
t\vert^{\Re\alpha+M+\frac{3}{2}}}.
\end{equation}

In order to effectively estimate the remaining terms in $E_{M+1}(\lambda t)$,
we use \eqref{eqn.est.Bessel.Szego} with $k=\lfloor\Re\alpha+M+2\rfloor$, the integer part of the number $\Re\alpha+M+2$,
yielding the slightly improved estimate
\begin{equation}\label{eqn.improved}
\vert\mathcal{J}_{m+\alpha}(\lambda t)\vert \lesssim \frac{\vert
m+\alpha-\frac{1}{2}\vert^{\lfloor\Re\alpha\rfloor+M+2}}{\vert\lambda
t\vert^{\lfloor\Re\alpha\rfloor+M+2}}
\biggl|\frac{\Gamma(\frac{1}{2})\Gamma(m-M+\alpha-\lfloor\Re\alpha\rfloor-\frac{3}{2})} {\Gamma(m-M+\alpha-\lfloor\Re\alpha\rfloor-1)}\biggr|
\end{equation}
for $m\geq M+2$; note here that
\[\biggl|\frac{\Gamma(m-M+\alpha-\lfloor\Re\alpha\rfloor-\frac{3}{2})}
{\Gamma(m-M+\alpha-\lfloor\Re\alpha\rfloor-1)}\biggr|\simeq
m^{-\frac{1}{2}}, \quad m\gg 1\] by the usual estimates for the Gamma
function. It thus follows that
\begin{multline*}
\biggl|\sum_{m=M+2}^\infty t^{2m}a_m(t)\mathcal{J}_{m+\alpha}(\lambda t)\biggr| \\
\lesssim R_1^{-(\Re\alpha-\frac{1}{2})}\sum_{m=M+2}^\infty
t^{2m}R_1^{-m}\bigl|m+\alpha-\frac{1}{2}\bigr|^{\lfloor\Re\alpha\rfloor+M+2}
(m-M)^{-\frac{1}{2}}\\
\lesssim R_1^{-(\Re\alpha-\frac{1}{2})}\sum_{m=M+2}^\infty\underbrace{\left(\frac{R_0^2}{R_1}\right)^m\bigl|m+\alpha-\frac{1}{2}\bigr|^{\lfloor\Re\alpha\rfloor+M+2}m^{-\frac{1}{2}}}_{b_m}
\end{multline*}
Since $\vert b_m\vert^{1/m}\simeq\frac{R_0^2}{R_1}\vert
m+\alpha-\frac{1}{2}\vert^{(\lfloor\Re\alpha\rfloor+M+2)/m}m^{-\frac{1}{2m}}$, with $\frac{R_0^2}{R_1}<1$, it is
clearly possible to find a (possibly large) integer $m_0$ such that
\[\forall m\geq m_0:\vert b_m\vert^{1/m}\lesssim \frac{1}{2}\left(1+\frac{R_0^2}{R_1}\right)<1.\]
We conclude that the series $\sum_{m=M+2}^\infty b_m$ is absolutely convergent -- this would \emph{not} follow had we instead used the weaker estimate \eqref{eqn.standard.Bessel-estimate}.

At long last we may now conclude, with the help of \eqref{eqn.improvedM}, that
\[\vert E_{M+1}(\lambda t)\vert\leq c_M t^{2(M+1)}\vert\lambda t\vert^{-(\Re\alpha+M+1)},\]
where $c_M$ is bounded by
\begin{multline*}
c\Biggl|\Bigl|\frac{2^{\alpha-\frac{1}{2}}\Gamma(\alpha+1)}{\Gamma(\alpha+\frac{1}{2})}\Bigr|
2^{\Re\alpha+M}R_1^{-(M+1)}\Gamma\bigl(\alpha+M+\tfrac{3}{2}\bigr)\\
+ \cdots c_\alpha 2^MR_0^{-2(M+1)}\sum_{m=M+2}^\infty \left(\frac{R_0^2}{R_1}\right)^m\bigl|
m+\alpha-\frac{1}{2}\bigr|^{\lfloor\Re\alpha\rfloor+M+2}m^{-\frac{1}{2}}\Biggr|<\infty
\end{multline*}
\end{proof}

As in \cite[Lemma~4]{Meaney-Prestini}, one may also estimate derivatives of
$\varphi_\lambda(t)$ and $E_{M+1}(\lambda t)$ with respect to the spectral
parameter $\lambda$. The somewhat stronger result reads as follows:

\begin{lemma}\label{lemma.derivatives}
\begin{enumerate}
\romannum \item For every nonnegative integer $n$, there exists a constant
$K_n\geq 0$ such that
\[\forall t\in\R^+, \lambda\in\C: \Bigl|\frac{d^n}{d\lambda^n}\varphi_\lambda(t)\Bigr|\leq K_n(1+t)^{n+1}e^{(\vert\Im z\vert - \Re\rho)t}.\]
In particular, $\bigl|\frac{d}{d\lambda}\varphi_\lambda(t)\bigr|\leq
K\vert\lambda\vert^{-2}e^{-\rho t}$ for $\vert\lambda t\vert<1$.

\item Assume $\vert\lambda t\vert< 1$. For every $M\geq 1$ there exists a
constant $c_M$ such that $\vert\frac{d}{d\lambda}E_{M+1}(\lambda t)\vert \leq
c_M t^{2(M+1)}\vert\lambda\vert^{-1}$.
\end{enumerate}
\end{lemma}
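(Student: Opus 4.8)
The plan is to prove part (i) directly from the integral representation \eqref{eqn.int.formula}, which is ideally suited here because it isolates the \emph{entire} $\lambda$-dependence of $\varphi_\lambda(t)$ in the factor $\cos(\lambda s)$. Since $A_{\alpha,\beta}(\cdot,t)$ is integrable on $[0,t]$ (its only singularity $(\cosh t-\cosh s)^{\alpha-\frac12}$ at $s=t$ is integrable because $\alpha>\tfrac12$) and is independent of $\lambda$, one may differentiate under the integral sign to obtain
\[\frac{d^n}{d\lambda^n}\varphi_\lambda(t)=\frac{2}{\Delta(t)}\int_0^t\Bigl(\frac{d^n}{d\lambda^n}\cos(\lambda s)\Bigr)A_{\alpha,\beta}(s,t)\,ds.\]
Because $\frac{d^n}{d\lambda^n}\cos(\lambda s)=s^n\cos(\lambda s+\tfrac{n\pi}{2})$ and $|\cos(w+c)|\le\cosh(\Im w)$ for real $c$, one has $\bigl|\frac{d^n}{d\lambda^n}\cos(\lambda s)\bigr|\le s^n\cosh(s\,\Im\lambda)\le t^n e^{|\Im\lambda|t}$ for $0\le s\le t$. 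The virtue of this route is that the $\lambda$-dependence is now completely explicit and uniform: there are no poles and no confluence of the two Harish--Chandra solutions at $\lambda=0$ to manage, which is exactly what forces the delicate $\mathbf{c}$-function analysis in the symmetric-space arguments. It therefore remains only to establish the $\lambda$-free kernel estimate $\frac{1}{|\Delta(t)|}\int_0^t|A_{\alpha,\beta}(s,t)|\,ds\lesssim(1+t)e^{-\rho t}$ for all $t\ge0$.

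To prove this estimate I would first bound the hypergeometric factor in $A_{\alpha,\beta}$ by a constant: its argument $\tfrac{\cosh t-\cosh s}{2\cosh t}$ lies in $[0,\tfrac12)$ whenever $0\le s\le t$, and the coefficients $d_j$ of its power series were shown in the Appendix to be absolutely summable once $\Re\alpha>\tfrac12$, so $\bigl|{_2}F_1(\tfrac12+\beta,\tfrac12-\beta;\alpha+\tfrac12;\cdot)\bigr|\le\sum_j|d_j|2^{-j}=:C_{\alpha,\beta}<\infty$. This leaves $|A_{\alpha,\beta}(s,t)|\lesssim\sinh(2t)(\cosh t)^{\beta-\frac12}(\cosh t-\cosh s)^{\alpha-\frac12}$. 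Since $\alpha>\tfrac12$, the elementary bound $(\cosh t-\cosh s)^{\alpha-\frac12}\le(\cosh t)^{\alpha-\frac12}$ gives $\int_0^t(\cosh t-\cosh s)^{\alpha-\frac12}\,ds\le t(\cosh t)^{\alpha-\frac12}$. Inserting the explicit $\Delta(t)=(2\sinh t)^{2\alpha+1}(2\cosh t)^{2\beta+1}$ and counting powers of $e^t$ (the numerator contributing exponent $\rho$, the denominator $2\rho$) yields $\frac{1}{|\Delta(t)|}\int_0^t|A|\,ds\lesssim t\,e^{-\rho t}$ for large $t$, while for $t$ in a compact set the left-hand side is manifestly bounded; together this is the desired bound. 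Combining with the $\cos$-estimate gives $\bigl|\frac{d^n}{d\lambda^n}\varphi_\lambda(t)\bigr|\lesssim t^n e^{|\Im\lambda|t}(1+t)e^{-\rho t}\le K_n(1+t)^{n+1}e^{(|\Im\lambda|-\rho)t}$, which is part (i). This power-count, producing the sharp rate $e^{-\rho t}$ without invoking any cancellation, is the main point; it is also the main obstacle, in that the correct exponential decay hinges on the explicit form of $\Delta$ and on $\alpha>\tfrac12$ (needed both for the monotonicity step and for summability of $(d_j)$).

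For the refined local estimates, valid when $|\lambda t|<1$, I would instead differentiate the Bessel expansion \eqref{eqn.expansion.full} term by term, which is legitimate since the Appendix establishes absolute and uniform convergence. Using $\frac{d}{dz}\mathcal{J}_\mu(z)=-\tfrac{1}{2\mu+1}z\,\mathcal{J}_{\mu+1}(z)$ together with the uniform bound $|\mathcal{J}_\nu(\lambda t)|\lesssim1$ for $|\lambda t|\le1$ from \eqref{eqn.modifiedBessel.less1}, each term satisfies $\bigl|\frac{d}{d\lambda}\mathcal{J}_{m+\alpha}(\lambda t)\bigr|=t\bigl|\mathcal{J}_{m+\alpha}'(\lambda t)\bigr|\lesssim|\lambda|t^2$. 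For part (ii) one differentiates $E_{M+1}(\lambda t)=c\,\frac{t^{\alpha+1/2}}{\Delta'(t)}\sum_{m\ge M+1}a_m(t)t^{2m}\mathcal{J}_{m+\alpha}(\lambda t)$: the estimates \eqref{thm.asymptoticEXP.a.ests} on $a_m$, boundedness of $\frac{t^{\alpha+1/2}}{\Delta'(t)}$ on $[0,R_0]$, and the ratio $t^2/R_1\le R_0^2/R_1<1$ reduce the tail to a geometric series dominated by its first term, giving $\bigl|\frac{d}{d\lambda}E_{M+1}(\lambda t)\bigr|\lesssim|\lambda|\,t^{2(M+1)+2}$. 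Since $|\lambda t|<1$, $|\lambda|\,t^{2(M+1)+2}=|\lambda|^{-1}t^{2(M+1)}|\lambda t|^2\le|\lambda|^{-1}t^{2(M+1)}$, which is exactly (ii). The ``in particular'' assertion of (i) follows in the same manner from the leading $m=0$ term, which contributes $\lesssim|\lambda|t^2e^{-\rho t}=|\lambda|^{-1}|\lambda t|^2e^{-\rho t}$, the remaining sum being controlled by part (ii) with $M=0$.
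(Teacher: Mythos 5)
Your part (ii) is essentially the paper's argument: both differentiate the tail of the Bessel expansion term by term; you use the recurrence $\frac{d}{dz}\bigl(z^{-\mu}J_\mu(z)\bigr)=-z^{-\mu}J_{\mu+1}(z)$ where the paper writes $J_\mu'=-J_{\mu+1}+\frac{\mu}{z}J_\mu$ and recombines, and the outcome $\bigl|\frac{d}{d\lambda}E_{M+1}(\lambda t)\bigr|\lesssim\vert\lambda\vert t^{2(M+2)}\leq\vert\lambda\vert^{-1}t^{2(M+1)}$ is the same. Part (i) is where you genuinely diverge. The paper differentiates the Laplace-type representation $\varphi_\lambda(t)=c\int_0^1\int_0^\pi\vert\cosh t+\sinh t\,re^{i\psi}\vert^{i\lambda-\rho}\cdots$, so each $\lambda$-derivative only costs a factor $\ln\vert\cosh t+\sinh t\,re^{i\psi}\vert$, bounded by $t$, and the remaining integral is exactly the one already estimated in \cite[Lemma~2.3]{Koornwinder-newproof} by $(1+t)e^{(\vert\Im\lambda\vert-\Re\rho)t}$; nothing new has to be proved. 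You instead differentiate the cosine representation \eqref{eqn.int.formula}, pay $s^n\leq t^n$ and $e^{\vert\Im\lambda\vert t}$ for the derivative of $\cos(\lambda s)$, and are then left having to prove the $\lambda$-free kernel estimate $\frac{1}{\vert\Delta(t)\vert}\int_0^t\vert A_{\alpha,\beta}(s,t)\vert\,ds\lesssim(1+t)e^{-\rho t}$ from scratch. That estimate is true and your large-$t$ power count is correct, so the route works; its cost is this extra kernel estimate, its benefit is that it is self-contained and keeps the $\lambda$-dependence completely explicit.

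Two points need repair. First, near $t=0$ your bound $\int_0^t(\cosh t-\cosh s)^{\alpha-\frac12}\,ds\leq t(\cosh t)^{\alpha-\frac12}$ is too crude: combined with $\Delta(t)\asymp t^{2\alpha+1}$ it gives $\frac{1}{\Delta(t)}\int_0^t\vert A\vert\,ds\lesssim t^{1-2\alpha}$, which blows up as $t\to0$ since $\alpha>\frac12$, so boundedness on compacta is not ``manifest'' from what you wrote. It is true, but you need the refinement $\cosh t-\cosh s\asymp t^2-s^2$ for $0\leq s\leq t\leq 1$, which gives $\int_0^t(\cosh t-\cosh s)^{\alpha-\frac12}\,ds\asymp t^{2\alpha}\asymp\Delta(t)/\sinh(2t)$ there (alternatively, use nonnegativity of $A$ together with $\frac{2}{\Delta(t)}\int_0^t\cosh(\rho s)A(s,t)\,ds=\varphi_{i\rho}(t)\equiv1$). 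Second, the ``in particular'' clause: your own computation gives an $m=0$ contribution of size $\vert\lambda\vert t^2=\vert\lambda\vert^{-1}\vert\lambda t\vert^2\leq\vert\lambda\vert^{-1}$, and the tail controlled by (ii) is likewise $O(\vert\lambda\vert^{-1})$, so what you actually prove is $\bigl|\frac{d}{d\lambda}\varphi_\lambda(t)\bigr|\lesssim\vert\lambda\vert^{-1}e^{-\rho t}$, and only for $t\leq R_0$, where the expansion is valid. The stated exponent $\vert\lambda\vert^{-2}$ does not follow from your argument --- nor from the paper's, which offers no derivation of this clause; it is most likely a misprint, but you should either prove the $\vert\lambda\vert^{-1}$ bound and say so, or flag the discrepancy rather than assert the claim follows.
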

\begin{proof}
\begin{enumerate}\romannum
\item The stated estimate for $\frac{d^n}{d\lambda^n}\varphi_\lambda$ is proved
just as the real parameter-analogue in \cite[Lemma~14]{FJ}: It follows from the Laplace-type integral representation \eqref{eqn.Laplace-type} for $\varphi_\lambda$ that
\[\begin{split}
\frac{d^n}{d\lambda^n}\varphi_\lambda^{(\alpha,\beta)}(t) & = c_{\alpha,\beta}\int_0^1\int_0^\pi i^n(\ln\vert\cosh t+\sinh t\,re^{i\psi}\vert)^n \\
&\quad\times \vert\cosh t+\sinh t\,re^{i\psi}\vert^{i\lambda-\rho}(1-r^2)^{\alpha-\beta-1}r^{2\beta+1}(\sin\psi)^{2\beta}\,d\psi\,dr.
\end{split}\]
The stated estimate readily follows from a classical estimate of
$\varphi_\lambda(t)$, and a trivial estimate of the logarithm. More precisely,
according to \cite[Lemma~2.3]{Koornwinder-newproof},
$\vert\Gamma(\alpha+1)^{-1}\varphi_\lambda^{(\alpha,\beta)}(t)\vert\leq
(1+t)e^{(\vert\Im\lambda\vert-\Re\rho)t}$. Furthermore,
$\vert\cosh t+\sinh t\,re^{i\psi}\vert\leq e^t$ for $t\in[0,1]$,
$\psi\in[0,\pi]$, whence $(\ln\vert\cosh t+\sinh t\,re^{i\psi}\vert)^n\leq
t^n$.

\item The method of proof is the same as for
\cite[Lemma~4(ii)]{Meaney-Prestini}, but with
\eqref{eqn.standard.Bessel-estimate} giving the Bessel function estimates. The
proof goes as follows: We notice that
\[\begin{split}
\biggl|\frac{\partial}{\partial\lambda}\Bigl\{(\lambda
t)^{-(m+\alpha)}J_{m+\alpha}(\lambda t)\Bigr\}
\biggr| & \leq \frac{1}{t^{m+\Re\alpha}} \biggl\{\Bigl|\frac{(m+\alpha)J_{m+\alpha}(\lambda t)}{\lambda^{m+\alpha+1}}\Bigr| + \Bigl|\frac{tJ'_{m+\alpha}(\lambda t)}{\lambda^{m+\alpha}}\Bigr|\biggr\}\\
&\leq \frac{1}{t^{m+\Re\alpha}}\biggl\{\vert m+\alpha\vert\Bigl|\frac{J_{m+\alpha}(\lambda t)}{\lambda^{m+\alpha+1}}\Bigr|+ \Bigl|\frac{tJ_{m+\alpha+1}(\lambda t)}{\lambda^{m+\alpha}}\Bigr|\biggr\}
\end{split}\]
since $J_\mu'(z)=-J_{\mu+1}(z)+\frac{\mu}{z}J_\mu(z)$. By means of
\eqref{eqn.standard.Bessel-estimate}, the latter quantity is seen to be bounded
by
\begin{multline*}
\frac{1}{t^{m+\Re\alpha}}\biggl\{\vert\lambda\vert^{-1}\frac{t^{m+\Re\alpha}}{2^{m+\Re\alpha-1}}\frac{\vert m+\alpha\vert}{\vert
\Gamma(m+\alpha+1)\vert}
+\frac{t^2\vert\lambda\vert t^{m+\Re\alpha}}{2^{m+\Re\alpha+1}}\frac{e^{\vert\Im(\lambda t)}}{\vert\Gamma(m+\alpha+2)\vert}\Biggr\}\\
\lesssim \vert\lambda\vert^{-1}+\frac{\vert t\lambda\vert^2}{\vert\lambda\vert} \lesssim \vert\lambda\vert^{-1}.
\end{multline*}
The claim now follows by using the error estimates in the proof of Theorem
\ref{thm.asymptoticEXP} for the region $\vert\lambda t\vert\leq 1$.
\end{enumerate}
\end{proof}

\providecommand{\bysame}{\leavevmode\hbox to3em{\hrulefill}\thinspace}
\providecommand{\MR}{\relax\ifhmode\unskip\space\fi MR }
% \MRhref is called by the amsart/book/proc definition of \MR.
\providecommand{\MRhref}[2]{%
  \href{http://www.ams.org/mathscinet-getitem?mr=#1}{#2}
}
\providecommand{\href}[2]{#2}


\begin{thebibliography}{10}

\bibitem{Anker-Annals}
J.-P. Anker, \emph{{${\bf L}_p$} {F}ourier multipliers on {R}iemannian
  symmetric spaces of the noncompact type}, Ann. of Math. (2) \textbf{132}
  (1990), no.~3, 597--628.

\bibitem{Anker.duke}
\bysame, \emph{Sharp estimates for some functions of the {L}aplacian on
  noncompact symmetric spaces}, Duke Math. J. \textbf{65} (1992), no.~2,
  257--297.

\bibitem{Anker-Damek-Yacoub}
J.-P. Anker, E.~Damek, and C.~Yacoub, \emph{Spherical analysis on harmonic
  {$AN$} groups}, Ann. Scuola Norm. Sup. Pisa Cl. Sci. (4) \textbf{23} (1996),
  no.~4, 643--679 (1997).

\bibitem{Anker-Lohoue}
J.-P. Anker and N.~Lohou{\'e}, \emph{Multiplicateurs sur certains espaces
  sym\'etriques}, Amer. J. Math. \textbf{108} (1986), no.~6, 1303--1353.

\bibitem{Brandolini-Gigante}
L.~Brandolini and G.~Gigante, \emph{Equiconvergence theorems for
  {C}h\'ebli-{T}rim\`eche hypergroups}, Ann. Sc. Norm. Super. Pisa Cl. Sci. (5)
  \textbf{8} (2009), no.~2, 211--265.

\bibitem{ClercStein}
J.~L. Clerc and E.~M. Stein, \emph{{$L^{p}$}-multipliers for noncompact
  symmetric spaces}, Proc. Nat. Acad. Sci. U.S.A. \textbf{71} (1974),
  3911--3912.

\bibitem{CoifmanWeiss-book}
R.~R. Coifman and G.~Weiss, \emph{Analyse harmonique non-commutative sur
  certains espaces homog\`enes}, Lecture Notes in Mathematics, Vol. 242,
  Springer-Verlag, Berlin, 1971, {\'E}tude de certaines int{\'e}grales
  singuli{\`e}res.

\bibitem{CoifmanWeiss}
\bysame, \emph{Transference methods in analysis}, American Mathematical
  Society, Providence, R.I., 1976, Conference Board of the Mathematical
  Sciences Regional Conference Series in Mathematics, No. 31.

\bibitem{Cowling_Giulini_Meda1}
M.~Cowling, S.~Giulini, and S.~Meda, \emph{{$L^p$}-{$L^q$} estimates for
  functions of the {L}aplace-{B}eltrami operator on noncompact symmetric
  spaces. {I}}, Duke Math. J. \textbf{72} (1993), no.~1, 109--150.

\bibitem{Damek-Ricci}
E.~Damek and F.~Ricci, \emph{Harmonic analysis on solvable extensions of
  $h$-type groups}, J. geom. Anal. \textbf{2} (1992), no.~3, 213--248.

\bibitem{ErdelyiII}
A.~Erd{\'e}lyi, W.~Magnus, F.~Oberhettinger, and F.~G. Tricomi, \emph{Tables of
  integral transforms. {V}ol. {II}}, McGraw-Hill Book Company, Inc., New
  York-Toronto-London, 1954, Based, in part, on notes left by Harry Bateman.
  \MR{MR0065685 (16,468c)}

\bibitem{FJ}
M.~Flensted-Jensen, \emph{{P}aley--{W}iener type theorems for a differential
  operator connected with symmetric spaces}, Ark. Mat. \textbf{10} (1972),
  143--162.

\bibitem{Koornwinder-FJ}
M.~Flensted-Jensen and T.~Koornwinder, \emph{The convolution structure for
  {J}acobi function expansions}, Ark. Mat. \textbf{11} (1973), 245--262.

\bibitem{Gigante}
G.~Gigante, \emph{Transference for hypergroups}, Collect. Math. \textbf{52}
  (2001), no.~2, 127--155.

\bibitem{Giulini-Mauceri-Meda.Crelle}
S.~Giulini, G.~Mauceri, and S.~Meda, \emph{{$L^p$} multipliers on noncompact
  symmetric spaces}, J. reine angew. Math. \textbf{484} (1997), 151--175.

\bibitem{Helgason-PW}
S.~Helgason, \emph{An analogue of the {P}aley--{W}iener theorem for the
  {F}ourier transform of certain symmetric spaces}, Math. Annalen \textbf{165}
  (1966), 297--308.

\bibitem{Johansen-exp2}
T.~R. Johansen, \emph{On a class of non-integrable multipliers for the {J}acobi
  transform}, submitted (2010), 15 pages.

\bibitem{Johansen-disc}
\bysame, \emph{Almost everywhere convergence of the inverse {J}acobi transform
  and endpoint results for a disc multiplier}, Studia Math. \textbf{205}
  (2011), no.~2, 101--137.

\bibitem{Koornwinder-newproof}
T.~Koornwinder, \emph{A new proof of a {P}aley-{W}iener type theorem for the
  {J}acobi transform}, Ark. Mat. \textbf{13} (1975), 145--159.

\bibitem{Koornwinder-book}
T.~H. Koornwinder, \emph{Jacobi functions and analysis on noncompact semisimple
  {L}ie groups}, Special functions: group theoretical aspects and applications,
  Math. Appl., Reidel, Dordrecht, 1984, pp.~1--85.

\bibitem{Lohoue}
N.~Lohou{\'e}, \emph{Comparaison des champs de vecteurs et des puissances du
  laplacien sur une vari\'et\'e riemannienne \`a courbure non positive}, J.
  Funct. Anal. \textbf{61} (1985), no.~2, 164--201.

\bibitem{Meaney-Prestini}
C.~Meaney and E.~Prestini, \emph{Almost everywhere convergence of inverse
  spherical transforms on noncompact symmetric spaces}, J. Funct. Anal.
  \textbf{149} (1997), no.~2, 277--304.

\bibitem{Nilsson-pq}
A.~Nilsson, \emph{{$L^p$}-{$L^q$} multipliers on non-compact {R}iemannian
  symmetric spaces}, Math. Scand. \textbf{84} (1999), no.~2, 203--212.

\bibitem{Gestur_Henrik-SBT}
G.~{\'O}lafsson and H.~Schlichtkrull, \emph{The {S}egal-{B}argmann transform
  for the heat equation associated with root systems}, Adv. Math. \textbf{208}
  (2007), no.~1, 422--437.

\bibitem{Opdam}
E.~M. Opdam, \emph{Harmonic analysis for certain representations of graded
  {H}ecke algebras}, Acta Math. \textbf{175} (1995), no.~1, 75--121.

\bibitem{Schindler}
S.~Schindler, \emph{Some transplantation theorems for the generalized {M}ehler
  transform and related asymptotic expansions}, Trans. Amer. Math. Soc.
  \textbf{155} (1971), 257--291.

\bibitem{Stanton-Tomas}
R.~J. Stanton and P.~A. Tomas, \emph{Expansions for spherical functions on
  noncompact symmetric spaces}, Acta Math. \textbf{140} (1978), no.~3-4,
  251--276.

\bibitem{SteinWeiss}
E.M. Stein and G.~Weiss, \emph{Introduction to fourier analysis on euclidean
  spaces}, Princeton Mathematical Series, no.~32, Princeton University Press,
  Princeton, N.J., 1971.

\bibitem{Stromberg}
J.-O. Str{\"o}mberg, \emph{Weak type {$L\sp{1}$} estimates for maximal
  functions on noncompact symmetric spaces}, Ann. of Math. (2) \textbf{114}
  (1981), no.~1, 115--126.

\bibitem{Szego}
G.~Szeg{\"o}, \emph{{\"U}ber einige asymptotische {E}ntwicklungen der
  {L}egendreschen {F}unktionen}, Proc. London Math. Soc. (2) \textbf{36}
  (1932), 427--450.

\bibitem{Taylor.duke}
M.~E. Taylor, \emph{{$L^p$}-estimates on functions of the {L}aplace operator},
  Duke Math. J. \textbf{58} (1989), no.~3, 773--793.

\bibitem{Watson}
G.~N. Watson, \emph{A treatise on the theory of {B}essel functions}, second
  ed., Cambridge University Press, Cambridge, 1944.

\end{thebibliography}
\end{document}